\newcommand{\ddddd}{\mathrm{d}}
\newcommand{\R}{\mathrm{Re}}
\newcommand{\deff}{\overset{\mathrm{def}}{=}}
\newcommand{\pp}{\mathcal{P}}
\newcommand{\bb}{\mathcal{B}}
\newcommand{\oo}{\mathcal{O}}
\newcommand{\re}{\mathrm{Re}}
\newcommand{\im}{\mathrm{Im}}
\newcommand*\circled[1]{\tikz[baseline=(char.base)]{
		\node[shape=circle,draw,inner sep=0.3pt] (char) {#1};}}
\begin{document}
	\baselineskip=17pt
	
	\title{Long-time asymptotic analysis for defocusing Ablowitz-Ladik system with initial value in lower regularity}
	\author[a,b]{Meisen Chen\thanks{Corresponding author: chenms93@szu.edu.cn}}
	\author[c]{Engui Fan}
	\author[a]{Jingsong He}
	\affil[a]{\small Institute for Advanced Study, Shenzhen University, Shenzhen, 518060, China}
	\affil[b]{College of Physics and Optoelectronic Engineering, Shenzhen University, Shenzhen, 518060, China}
	\affil[c]{School of Mathematical Sciences, Fudan University, Shanghai, 200433, China}
	\maketitle
	
	\theoremstyle{plain}
	\newtheorem{proposition}{Proposition}[section]
	\newtheorem{lemma}[proposition]{Lemma}
	\newtheorem{theorem}[proposition]{Theorem}
	\newtheorem{drhp}[proposition]{$\bar\partial$-RH problem}
	\newtheorem{srhp}[proposition]{Scalar RH problem}
	\newtheorem{rhp}[proposition]{RH problem}
	\newtheorem{dbarproblem}[proposition]{$\bar\partial$-problem}
	\newtheorem{remark}[proposition]{Remark}
	
	\begin{abstract}
		Recently, we have given the $l^2$ bijectivity for defocusing Ablowitz-Ladik systems in the discrete Sobolev space $l^{2,1}$ by inverse spectral method. 
		Based on these results, the goal of this article is to investigate the long-time asymptotic property for the initial-valued problem of the defocusing Ablowitz-Ladik system with initial potential in lower regularity.
		The main idea is to perform proper deformations and analysis to the corespondent Riemann-Hilbert problem with the unit circle as the jump contour $\Sigma$. 
		As a result, we show that when $|\frac{n}{2t}|\le 1<1$, the solution admits Zakharov-Manakov type formula, and when $|\frac{n}{2t}|\ge 1>1$, the solution decays fast to zero. 
		\\
		
		\noindent\textbf{Key Words}: defocusing Ablowitz-Ladik system, inverse spectral method, long-time asymptotic property, Riemann-Hilbert problem\\
		\noindent\textbf{2010 Mathematics Subject Classification Numbers:} 37K15, 35Q15, 35Q55

	\end{abstract}

	\tableofcontents
	\addtocontents{toc}{\protect\setcounter{tocdepth}{1}}
	
\section{Introduction}
\indent

It's well known that inverse scattering transform is an effective method when solving the integrable system. 
In 1967, the inverse scattering transform is firstly introduced when solving the KdV equation by Gardne, Greene, Kruskal and Miura \cite{gardner1967method}.
This method is also applied to ZS-AKNS systems \cite{Ablowitz1974the}. 
More literatures for solving the continuous and discrete integrable systems refers to \cite{Wang2010integrable,Biondini2014inverse,Biondini2015inverse,Kraus2015the,Pichler2017on,xu2014a,xiao2016a,kang2018multi,yang2019high,yang2010nonlinear,yang2021riemann,teschl1999inverse,Ablowitz2007inverse,ablowitz2020discrete,Prinari,Ortiz2019inverse,chen2021riemann},
Except for solving initial-valued problem, this method also have a great number of important results in mathematics and physics. 
Particularly, Deift and Zhou apply this method to the nonlinear Schr\"odinger equations \cite{deift1994long,deift1994long2} and modified KdV equations \cite{deift1993steepest} to obtain solutions, and further develop a nonlinear steepest descent method to study the long-time asymptotic analysis with potentials in Schwartz space. 
This method also has been applied to numerous integrable systems for the long-time asymptotic analysis \cite{grunert2009long,de2009long,xu2015long,xu2018long,huang2015long,zhu2018the,kruger2009long,kruger2009long2,yamane2015long,yamane2019long,chen2020long}. 
Recent years, people become interested in extending the long-time asymptotic analysis for the integrable system with lower regularity. 
In particular, for nonlinear Schr\"odinger equation as one of the most important integrable systems, based on the $L^2$-Sobolev bijectivity for the inverse scattering transform \cite{zhou1998l2}, with a dbar steepest descent method, people investigate the long-time asymptotic property when the initial potential belongs to a weighted Sobolev space:
\begin{align*}
	H^{1,1}=\{f\in L^2(\mathbb{R}):xf,f',xf'\in L^2(\mathbb{R})\}. 
\end{align*}

In this paper, we focus on defocusing Ablowitz-Ladik systems
\begin{align}\label{e1}
	i\partial_tq_n(t)=q_{n+1}(t)-2q_n(t)+q_{n-1}(t)-|q_n(t)|^2(q_{n+1}(t)+q_{n-1}(t)),
\end{align}
that is integrable systems introduced by Ablowitz and Ladik \cite{Ablowitz1975nonlinear,ablowitz1976nonlinear} in 1975-1976, where $n\in\mathbb{Z}$ is the discrete spatial variable and $t\in\mathbb{R}^+$ is the continuous variable. 
It's shown as the spatial integrable discretization of the defocusing nonlinear Schr\"odinger equation:
\begin{align*}
	iu_t+u_{xx}-2|u|^2u=0,
\end{align*}
and there are many important researches for it in aspects of mathematics and physics. 
In 2005, Nenciu \cite{nenciu2005lax} constructs the Lax pair for defocusing Ablowitz-Ladik systems by the connection between defocusing Ablowitz-Ladik systems and the orthogonal polynomials on the unit circle. 
In 2006, Chow et al \cite{chow2006analytic} show the analytic doubly periodic waves pattern for Ablowitz-Ladik systems. 
Using methods of algebraic geometry, Miller et al \cite{miller1995finite} obtain the finite genus solutions for Ablowtiz-Ladik systems.
Recently, Yamane \cite{yamane2014long} studies the long-time asymptotic behavior by Deift-Zhou method with initial data bounded by $\sup_{n\in\mathbb{Z}}|q_n|<1$ and in the discrete Schwartz space:
\begin{align*}
	\{q_n\}_{n=-\infty}^\infty\in\left\{\{a_n\}_{n=-\infty}^\infty:\sum_{n=-\infty}^{\infty}(1+n^2)^ka_n^2<\infty,\ k\in\mathbb{N}^+\right\}. 
\end{align*}
Here, we present the long-time asymptotic analysis for initial-valued problem of (\ref{e1}) with initial potential in lower regularity, and it satisfies
\begin{align}\label{e2}
	&\{q_n\}_{-\infty}^\infty\in l^{2,1}=\left\{\{a_n\}_{-\infty}^\infty:\sum_{n=-\infty}^{\infty}(1+n^2)a_n^2<\infty\right\},\quad \sup_{n\in\mathbb{Z}}|q_n|<1, 
\end{align}
where $l^{2,1}$ is a discrete weighted Sobolev space. 

For the initial potential satisfies (\ref{e2}), recently,  we prove in \cite{chen2022sobolev} that the direct scattering mapping maps these potentials to reflection coefficients $r(\lambda)$ which belongs to a Sobolev space $H^1_\theta(\Sigma)$ and is bounded by $\parallel{r}\parallel_{L^\infty(\Sigma)}<1$, where $\Sigma$ is the jump contour shown in Figure \ref{f1s} and $\theta\in[0,2\pi]$ is the parameter of it. 
Reversely, by the inverse scattering mapping, if reflection coefficients belong to $H^1_\theta(\Sigma)$ and is bounded by $\parallel{r}\parallel_{L^\infty(\Sigma)}<1$, potentials also belongs to the discrete weighted Sobolev space $l^{2,1}$. 
In fact, by the argument in \cite{chen2022sobolev}, these two mappings is of Lipschitz continuity.
Moreover, for Ablowitz-Ladik systems, if we denote $r(\lambda,t)$ the reflection coefficient for $q_{n}(t)$,  the time flow
\begin{align*}
	r(\lambda)\equiv r(\lambda,0)\mapsto r(\lambda,t)=r(\lambda)e^{2i(\cos\theta-1)t},\quad \lambda=e^{i\theta}\in\Sigma, 
\end{align*}
persists the reflection coefficient in $H^1_\theta(\Sigma)$ and bounded by $\parallel r(\cdot,t)\parallel_{L^\infty(\Sigma)}<1$. 
In (\ref{e2}), the weighted Sobolev space $l^{2,1}$ is the minimal condition for the solution solved by the inverse scattering transform. 

\begin{figure}
	\centering\includegraphics[width=0.3\linewidth]{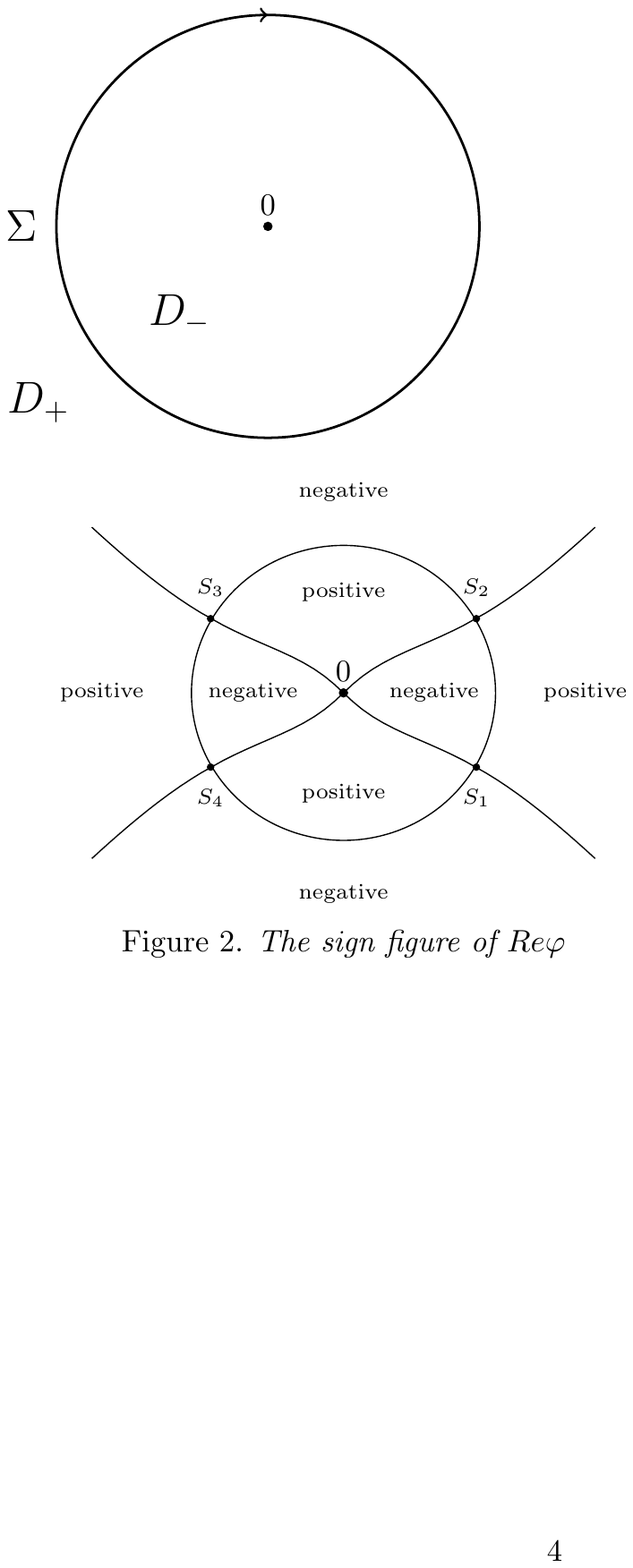}
	\caption{$\Sigma$: jump contour for RH problem \ref{r2.1}} $D_+/D_-$: the region outside/inside of $\Sigma$.\label{f1s}
\end{figure}

Since $r\in H^1_\theta(\Sigma)$, we obtain Riemann-Hilbert (RH) problem \ref{r2.1} and it couldn't applied proper rational approximation to $r(\lambda)$; therefore, it's hard to extend the jump contour and make the RH transform like that in \cite{yamane2014long}. 
For $r\in H^1_\theta(\Sigma)$, by Sobolev embedding theorem, it's also $\frac{1}{2}$-H\"older continuous on $\Sigma$;
as a result, we deform RH problem \ref{r2.1} properly into a $\bar\partial$-RH problem;
moreover, solution for this $\bar\partial$-RH problem can be factorized into a product of solutions for an RH problem and a $\bar\partial$ problem;
and, we further analyze this two problems separately and finally obtain the long-time asymptotic property.
This idea is generalized from the dbar steepest descent method that people applied to orthogonal polynomials and nonlinear Schr\"odinger equations \cite{dieng2008long,Borghese2018long}.
This method make it eligible to analyze the long-time asymptotic property under the condition $r\in H^1_\theta(\Sigma)$. 
In our method, the jump contour is a unit circle and there are two separate first-order stationary phase points, which make the deformation of RH problem more tricky.

In this paper, for Hilbert spaces $P_1$ and $P_2$, we denote $\bb(P_1,P_2)$ as the normed linear space consisting of all linear bounded operators from $P_1$ to $P_2$, and simply write $\bb(P_1,P_1)$ as $\bb(P_1)$. 

The article is organized as follows. In Section \ref{s2}, according to Lax pair, we give the direct scattering including Jost solutions, modified Jost solution and the reflection coefficient, and then perform inverse scattering transform to obtain the reconstructed formula by constructing the correspondent RH problem. 
In Section \ref{s3}, we analyze the long-time asymptotic behavior on the region $-1<-V_0\le \xi\deff\frac{n}{2t}\le V_9<1$.
In Section \ref{s4} and \ref{s5}, we analyze the long-time aaymptotics for $|\xi|\ge V_0>1$. 

\subsection{Main results}
\indent

In this paper, we study the long-time asymptotic analysis on three regions as shown in Figure \ref{f1}. 
\begin{figure}
\centering\includegraphics{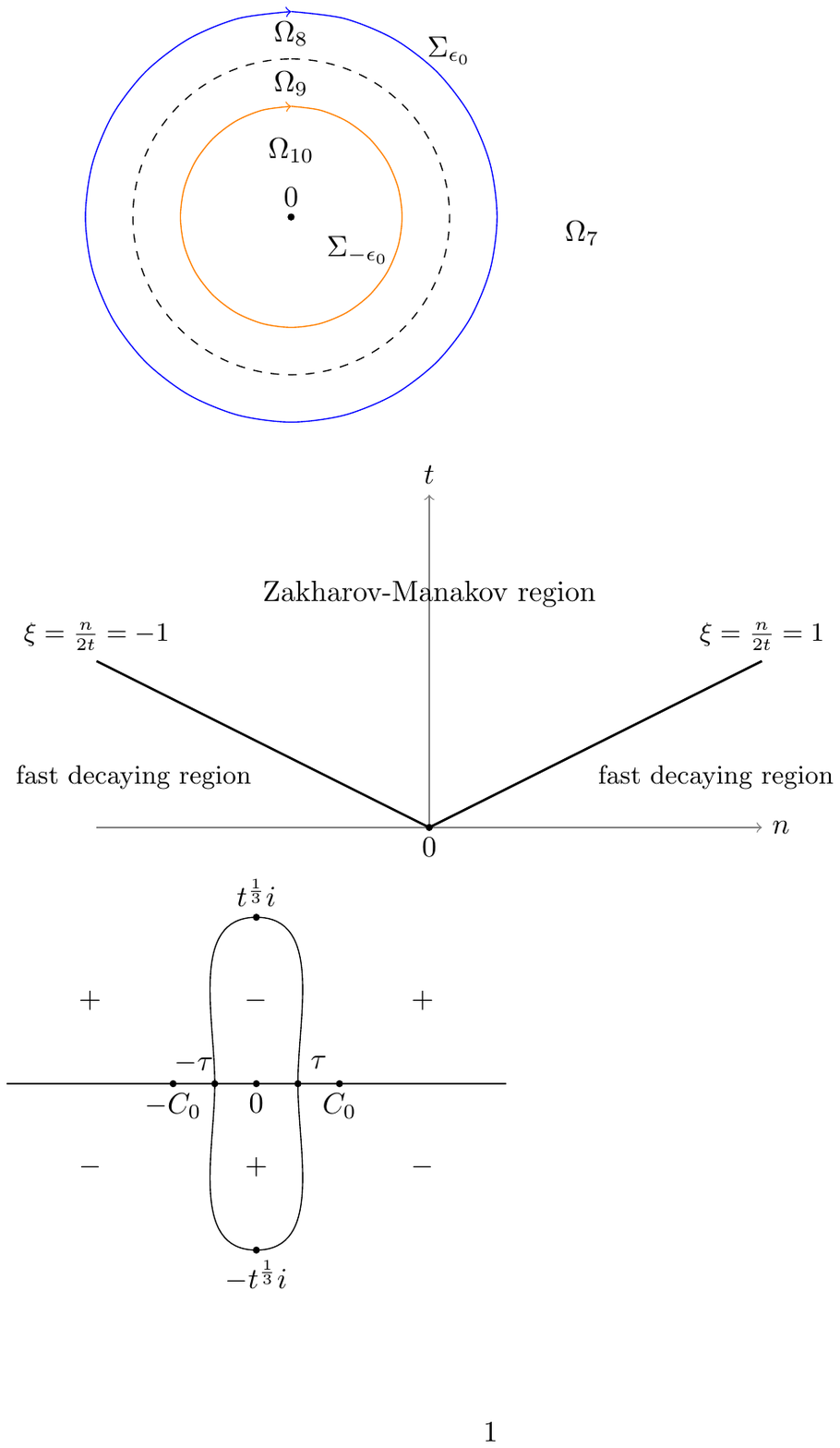}
\caption{Regions on the half plane divided by rays $\frac{n}{2t}=\pm1$.}\label{f1}
\end{figure}

When $-1<-V_0\le \xi\le V_0<1$, we see that first-order stationary phase points $S_j=i\xi+\sqrt{1-\xi^2}$ are on the jump contour $\Sigma$.
Since these stationary phase points appear on $\Sigma$, the reflection coefficient at $S_j$ has impact to the solution;
so, we come to investigate the local RH problem at $S_j$ on Section \ref{s3.4}, and make scaling transform for them to obtain the model RH problem that is related to parabolic cylinder functions, which refers to \cite{deift1993long}.
By the model RH problem, we obtain the oscillatory leading term that is $\oo(t^{-\frac{1}{2}})$ as shown in (\ref{e3s}), and we obtain that as $t\to+\infty$, the solution in Zakharov-Manakov type formulas (\ref{e3s}). 
It's notable that under the condition $\{q_n\}_{n=-\infty}^\infty\in L^{2,1}$, the decaying rate is $\oo(t^{-1})$. 

When $|\xi|\ge V_0>1$, stationary phase points are pure imaginary number and off the jump contour $\Sigma$;
in this case, the leading term in (\ref{e3s}) vanishes, and $q_n(t)$ decays fast when $t\to+\infty$,
and we call these regions as fast decaying region. 
\begin{theorem}\label{th1}
	For the initial-valued problem (\ref{e1})-(\ref{e2}), the solution $q_n(t)$ admits the following long-time asymptotic formula in the sectors obtained by dividing the half plane by rays $\frac{n}{2t}=\pm1$, which is shown in Figure \ref{f1}.
	\begin{enumerate}[label=\alph*)]
		\item The Zakharov-Manakov region $-1<-V_0\le \xi\le V_0<1$ with some positive constant $V_0$: as $t\to+\infty$, the solution admits Zakharov-Manakov type formulas:
		\begin{align}\label{e3s}
			&q_n(t)=\frac{i}{2}t^{-\frac{1}{2}}(1-\xi^2)^{-\frac{1}{4}}\delta^{-1}(0)\sum_{j=1}^{2}\delta_{j0}^{2}	[M^{L,j}_1]_{1,2}+\oo(t^{-\frac{3}{4}}),\\
			&\quad[M_1^{L,j}]_{1,2}=-i\frac{(2\pi)^\frac{1}{2}e^\frac{\pi i}{4}e^\frac{-\pi\nu_j}{2}}{r(S_j)\Gamma(-i\nu_j)},\quad j=1,2,\notag
		\end{align}
	where $r(S_j)$ is the value of reflection coefficients at stationary phase points $S_j$, $j=1,2$, $\int^{S_1}_{S_2}$ denotes the integral on $\Sigma$ from $S_2$ to $S_1$, $\Gamma(\lambda)$ is the Euler's gamma function, 
	\begin{align*}
		&S_j=-i\xi+(-1)^j\sqrt{1-\xi^2},\quad \beta_j=\frac{it^{-\frac{1}{2}}}{2}(1-\xi^2)^{-\frac{1}{4}}S_j,\quad\nu_j=-\frac{1}{2\pi}\ln(1-|r(S_j)|^2) \\
		&\delta^{-1}(0)=e^{\frac{1}{2\pi i}\int_{S_2}^{S_1}s^{-1}\ln(1-|r(s)|^2)\ddddd s},\quad\phi(S_j)=2((-1)^j\sqrt{1-\xi^2}-\xi\arg S_j-1),\\
		&\alpha_j(S_j)=\frac{1}{2\pi i}\int_{S_2}^{S_1}\frac{\ln(1-|r(s)|^2)-\ln(1-|r(S_j)|^2)}{s-S_j}\ddddd s,\\
		&\delta_{j0}=e^{\alpha_j(S_j)-\frac{it}{2}\phi(S_j)}\left(\frac{(-1)^{j-1}\beta_j}{S_1-S_2}\right)^{(-1)^{j-1}i\nu_j}.
	\end{align*}
\item The fast decaying region $|\xi|\ge V_0>1$: as $t\to+\infty$, the solution decays to zero
\begin{align*}
	q_n(t)\sim\oo(t^{-1}).
\end{align*}
	\end{enumerate}

\end{theorem}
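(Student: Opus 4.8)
The plan is to run the nonlinear steepest descent method in its $\bar\partial$-relaxed form (as in \cite{dieng2008long,Borghese2018long}), adapted to the unit-circle contour $\Sigma$. Starting from RH problem \ref{r2.1}, whose jump carries an oscillatory exponential with phase governed by $e^{2i(\cos\theta-1)t}$ and the slow variable $\xi=\frac{n}{2t}$, I would first locate the stationary points of the phase, which are exactly the two points $S_1,S_2=-i\xi\pm\sqrt{1-\xi^2}$: for $|\xi|<1$ these lie on $\Sigma$, while for $|\xi|>1$ they become pure imaginary and leave the contour. The next step is a conjugation by the scalar function $\delta(\lambda)$ solving the scalar RH problem with jump $1-|r|^2$ on the arc of $\Sigma$ from $S_2$ to $S_1$. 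This conjugation factors the single $2\times2$ jump into a product of upper- and lower-triangular factors whose off-diagonal entries are controlled by the sign of $\re(i\phi)$, and it is precisely the origin of the factors $\delta^{-1}(0)$ and $\delta_{j0}$ appearing in (\ref{e3s}).

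Because $r$ lies only in $H^1_\theta(\Sigma)$ and, by Sobolev embedding, is merely $\tfrac12$-H\"older continuous, the classical analytic lens opening is unavailable. Instead I would extend $r$ off $\Sigma$ by a non-analytic continuation whose $\bar\partial$-derivative is supported in thin sectors around each $S_j$ and is controlled by the H\"older bound. This yields a mixed $\bar\partial$-RH problem, and I would factor its solution as $M=M^{\mathrm{err}}M^{\mathrm{RHP}}$, where $M^{\mathrm{RHP}}$ solves a pure RH problem with jumps only on the deformed steepest-descent contour and $M^{\mathrm{err}}$ absorbs the $\bar\partial$-data. Away from the stationary points the jumps of $M^{\mathrm{RHP}}$ are exponentially small in $t$, so $M^{\mathrm{RHP}}$ decouples into two independent local problems near $S_1$ and $S_2$.

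On each local problem the scaling $\lambda\mapsto\beta_j^{-1}(\lambda-S_j)$ reduces matters to the parabolic-cylinder model RH problem of \cite{deift1993long}, whose solution is explicit; matching to the outer solution produces the amplitude $[M_1^{L,j}]_{1,2}=-i\,(2\pi)^{1/2}e^{\pi i/4}e^{-\pi\nu_j/2}/(r(S_j)\Gamma(-i\nu_j))$ recorded in (\ref{e3s}) (carried out in Section \ref{s3.4}). The $\bar\partial$-contribution $M^{\mathrm{err}}$ is then estimated through the solid Cauchy (Pompeiu) integral operator: using the smallness of $\bar\partial r$ and the $t^{-1/2}$ spread of the scaling windows, one obtains $M^{\mathrm{err}}=I+\oo(t^{-3/4})$, which fixes the error exponent. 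Reading off $q_n(t)$ from the $\lambda\to0$ expansion of $M$ via the reconstruction formula of Section \ref{s2}, and summing the two stationary-point contributions weighted by the $\delta$-factors, yields the Zakharov--Manakov formula of part a). For part b), when $|\xi|>1$ the stationary points are off $\Sigma$, the oscillatory factor is uniformly exponentially small on the whole contour, no local parametrix is needed, and the entire solution is governed by the $\bar\partial$-error, giving $q_n(t)=\oo(t^{-1})$ (Sections \ref{s4}--\ref{s5}).

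The main obstacle will be controlling the $\bar\partial$-problem in this low-regularity setting: with $r$ only $\tfrac12$-H\"older, the non-analytic extension must be engineered so that the $\bar\partial$-data decays quickly enough along the steepest-descent directions to beat the growth of the oscillatory exponential, and the resulting bound on the solid Cauchy operator must be uniform in $\xi$ across the closed sector $-V_0\le\xi\le V_0$. A secondary difficulty is geometric: the presence of two separate first-order stationary points on the circle, rather than one, complicates the shape of the lenses and the bookkeeping of the $\delta$-conjugation, since the two local windows must be kept disjoint and their cross-contributions shown to be negligible.
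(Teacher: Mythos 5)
Your proposal is correct and follows the paper's overall strategy almost step for step: the scalar conjugation $\delta^{-\sigma_3}$ with jump $1-|r|^2$ on the arc from $S_2$ to $S_1$, a non-analytic extension of $r$ off the circle producing a mixed $\bar\partial$-RH problem, the factorization into a pure RH part times a $\bar\partial$ part ($M^{(2)}=M^{(2,D)}M^{(2,R)}$ in the paper), reduction of the local behavior at $S_1,S_2$ to the parabolic-cylinder model of \cite{deift1993long}, a solid-Cauchy-operator estimate giving the $\oo(t^{-3/4})$ error, reconstruction at $\lambda=0$, and for $|\xi|>1$ exponentially small deformed jumps with the $\bar\partial$ part supplying the $\oo(t^{-1})$ rate.

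The one place where you genuinely diverge is the treatment of the pure RH part. You propose the parametrix route: decouple $M^{(2,R)}$ into two independent local problems, solve each explicitly, and match to the outer solution via a small-norm argument. The paper instead stays at the operator level in the Beals--Coifman framework: it represents $M^{(2,R)}(0,n,t)$ through the resolvent $(1-C_w)^{-1}$, truncates $w$ to the two crosses using the second resolvent identity, separates the two crosses with the Deift--Zhou lemma together with explicit composition bounds $\parallel C_{w^1}C_{w^2}\parallel_{L^2\to L^2}\lesssim t^{-\frac{1}{2}}$ (Lemmas \ref{l3.9}--\ref{l3.10}), and then conjugates by the scaling operators $N_j$ so that the limiting operator $1-\tilde A_j^\infty$ is exactly the Beals--Coifman operator of the model problem (Sections \ref{s3.4}--\ref{s3.5}, \ref{s3.9}). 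Both routes produce the same amplitude $[M_1^{L,j}]_{1,2}$ and the same error exponent; your matching argument absorbs the cross-interaction into the small-norm estimate, whereas the paper must prove it by hand, but the paper's route avoids constructing and inverting an outer parametrix. One detail your sketch omits: for $\xi\le -V_0<-1$ the sign of $\im\phi$ near $\Sigma$ is reversed relative to $\xi\ge V_0>1$, so the triangular factorization needed for the deformation requires first removing the diagonal factor $1-|r|^2$ by a second scalar function $\tilde\delta$ whose jump is on the whole circle (Section \ref{s5}); without it the deformed exponentials grow rather than decay. The sign analysis would force you to discover this, so it is an omission in the write-up rather than an error in the method.
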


\section{Riemann-Hilbert problem and the solution for defoxusing Ablowitz-Ladik systems}\label{s2}
\indent

In this section, we investigate Jost solutions: $X^\pm$ for the Lax pair corespondent to the initial-valued problem, make a transform for Jost solutions to obtain a modified Jost solution: $X^\pm\rightsquigarrow Y^\pm$, and then construct the RH problem. 
The Lax pair for (\ref{e1}) is:
\begin{subequations}
\begin{align}
	&X(z,n+1,t)=\left[\begin{matrix}
		z&q_n(t)\\\overline{q_n(t)}&z^{-1}
	\end{matrix}\right]X(z,n,t)\label{e3},\\
&\partial_t X(z,n,t)=i\left[\begin{matrix}
	-\frac{1}{2}(z-z^{-1})^2+q_n(t)\overline{q_{n-1}(t)}&q_{n-1}(t)z^{-1}-q_n(t)z\\\overline{q_n(t)}z^{-1}-\overline{q_{n-1}(t)}z&\frac{1}{2}(z-z^{-1})^2-q_{n-1}(t)\overline{q_{n}(t)}
\end{matrix}\right]\notag\\
&\quad\times X(z,n,t).
\end{align}
\end{subequations}
where $z$ is the spectral parameter and the Lax pair admits the Jost solution,
\begin{align}\label{e5}
	X^\pm\equiv X^\pm(z,n,t)\sim z^{n\sigma_3}e^{-\frac{it}{2}(z-z^{-1})^2\sigma_3},\quad n\to\pm\infty. 
\end{align}
Seeing from the spatial problem (\ref{e3}), we derive that
\begin{align}\label{e6}
	\det X^-=\prod_{k=-\infty}^{n-1}(1-|q_k(t)|^2),\quad\det X^+=\prod_{k=n}^{\infty}(1-|q_k(t)|^2)^{-1}.
\end{align}
Introducing
\begin{align*}
	c_n(t)=\prod_{k=n}^{\infty}(1-|q_k(t)|^2),\quad n\in\mathbb{Z}\cup\{-\infty\},
\end{align*}
because of assumption (\ref{e2}), from the result of \cite{chen2022sobolev}, we learn that $c_{-\infty}=\lim_{n\to-\infty}c_n(t)$ is nonzero and independent on $t$; therefore, for any fixed $t\le0$,
\begin{align*}
	1-|q_n(t)|^2\ne0,\quad n\in\mathbb{Z},
\end{align*}
which combined with (\ref{e3}) deduces that $X^\pm$ are invertible and fundamental solutions for the Lax pair; 
moreover, by the uniqueness of solution, there is a unique $2\times2$ matrix-valued function depending only on $z$ such that
\begin{align}\label{e7}
	X^-(z,n,t)=X^+(z,n,t)S(z),\quad S(z)=\left[\begin{matrix}
		a(z)&\breve b(z)\\b(z)&\breve a(z)
	\end{matrix}\right].
\end{align}
$S(z)$ is also known as the scattering matrix. 
Since $\left[\begin{matrix}
	0&q_n(t)\\\overline{q_n(t)}&0
\end{matrix}\right]$ is Hermitian, we learn from the spatial problem (\ref{e3}) that $X^\pm$ admit the following symmetric property
\begin{align*}
	X^\pm(z,n,t)=\sigma_1\overline{X^\pm(\bar z^{-1},n,t)}\sigma_1,\quad \sigma_1=\left[\begin{matrix}
		0&1\\1&0
	\end{matrix}\right],
\end{align*}
which combined with (\ref{e7}) deduces symmetry properties of $S(z)$:
\begin{align}\label{e9}
	\breve a(z)=\overline{a(\bar z^{-1})},\quad \breve b(z)=\overline{b(\bar z^{-1})},\quad z\in\mathbb{C}. 
\end{align}
Making the transformation
\begin{align}\label{e8}
	Y^\pm(z,n,t)=\left[\begin{matrix}
		1&0\\0&z
	\end{matrix}\right]X^\pm(z,n,t)z^{-n\sigma_3}e^{\frac{it}{2}(z-z^{-1})^2\sigma_3}\left[\begin{matrix}
	1&0\\0&z^{-1}
\end{matrix}\right],
\end{align}
from property (\ref{e5}), we learn that $Y^\pm(z,n,t)$ satisfy the normalization property
\begin{align}
	Y^\pm(z,n,t)\sim I,\quad n\to\pm\infty.
\end{align}
Making the spectral parameter transform: $\lambda=z^2$, seeing (\ref{e2}) and (\ref{e9}), we deduce that $Y^\pm(z,n,t)=Y^\pm(-z,n,t)$;
thus, we simply write $Y^\pm(\lambda,n,t)=Y^\pm(z,n,t)$. 
Seeing from \cite{chen2022sobolev}, we learn that $Y^+_1$ and $Y^-_2$ are both holomorphic on $D^-=\{|\lambda|<1\}$ and continuously extended to $D^-\cup\Sigma$, $Y^-_1$ and $Y^+_2$ are both holomorphic on $D^+=\{|\lambda|>1\}$ and continously extended to $D^+\cup\Sigma$. 
By (\ref{e7}) and (\ref{e8}), we derive that
\begin{align}\label{e11}
	a(z)=c_n(t)\det[Y^-_1,Y^+_2](\lambda,n,t),\quad zb(z)=c_n(t)e^{-it\phi(\lambda,n,t)}\det[Y^+_1,Y^-_1](\lambda,n,t),
\end{align}
where
\begin{align*}
	\phi(\lambda,n,t)=\lambda+\lambda^{-1}+2i\xi\log\lambda-2;
\end{align*}
since then, we write $a(\lambda=z^2)$ instead of $a(z)$.
Because $[Y^+_1,Y^-_2]$ is holomorphic on $D^-$ and continuously extended to $D^-\cup\Sigma$, by (\ref{e11}), $a(\lambda)$ is holomorphic on $D^-$ and continuously extended to $D^+\cup\Sigma$, 
%;by the symmetry (\ref{e9}), we also denote $\breve a(z)$ as $\breve a(\lambda)$ and it is holomorphic on $D^+$ and continuous on $\overline{D^+}$
too;
moreover, because of the continuity of $[Y^+_1,Y^-_1]$ on $\Sigma$, $zb(z)$ is also continuous on $\Sigma$;
therefore, we define the reflection coefficients on the circle $\lambda=e^{i\theta}\in\Sigma$:
\begin{align*}
	r(\lambda)=\frac{zb(z)}{a(z)},
\end{align*}
which is well-defined and belongs to $H^1_\theta(\Sigma)$ according to \cite{chen2022sobolev}.
Sometimes, we denote $r(\theta)=r(e^{i\theta})$ without confusion of notation on the jump contour $\Sigma$. 
Since $r\in H^1_\theta(\Sigma)$, by Sobolev embedding theory, we learn that $r(\theta)$ is $\frac{1}{2}$-H\"older continuous and boundend on $\Sigma$. 
By (\ref{e6}), (\ref{e7}) and (\ref{e9}), we see that
\begin{align}\label{e12s}
	1-|r(\theta)|^2=\frac{c_{-\infty}}{|a(e^{i\theta})|}>0,\quad \theta\in[0,2\pi]. 
\end{align}
Seeing from \cite{chen2022sobolev} and setting $Y^\pm=[Y^\pm_1,Y^\pm_2]$, we learn that as $\lambda\to\infty$ and $\lambda\to 0$, $Y^\pm(z,n,t)$ admit the following asymptotic property:
\begin{subequations}
\begin{align}
	&[Y^+_1,Y^-_2](\lambda,n,t)\sim \left[\begin{matrix}
		c_n^{-1}(t)+O(\lambda)&q_{n-1}(t)+O(\lambda)\\-c_n^{-1}(t)\overline{q_n(t)}\lambda+O(\lambda^2)&1+O(\lambda)
	\end{matrix}\right],\quad \lambda\to0,\label{e12}\\
&[Y^-_1,Y^+_2](\lambda,n,t)\sim\left[\begin{matrix}
	1+O(\lambda^{-1})&-c_n^{-1}(t)q_n(t)\lambda^{-1}+O(\lambda^{-2})\\\overline{q_{n-1}(t)}+O(\lambda^{-1})&c_n^{-1}(t)+O(\lambda^{-1})
\end{matrix}\right],\quad \lambda\to\infty\label{e13}.
\end{align}
\end{subequations}
From (\ref{e11}) and (\ref{e13}), we learn that
\begin{align}\label{e15s}
	\lim_{\lambda\to\infty}a(\lambda)=1.
\end{align}

We come to the construction of corespondent RH problems. 
Set a $2\times2$ matrix-valued function
\begin{align}\label{e14}
	M\equiv M(\lambda,n,t)=\begin{cases}
		\left[\begin{matrix}
			1&0\\-\overline{q_{n-1}(t)}c_n(t)&c_n(t)
		\end{matrix}\right]\left[Y_1^+,\frac{Y_2^-}{\breve a}\right](\lambda,n,t)&|\lambda|<1,\\
	\left[\begin{matrix}
		1&0\\-\overline{q_{n-1}(t)}c_n(t)&c_n(t)
	\end{matrix}\right]\left[\frac{Y_1^-}{a},Y_2^+\right](\lambda,n,t)&|\lambda|>1;
	\end{cases}
\end{align}
then, we clain that $M$ admits the following RH problem.
The first item comes from that of $Y^\pm(\lambda,n,t)$ and $a(\lambda)$. 
The second item naturally follows after the definition of $M(\lambda,n,t)$.
(\ref{e8}) and (\ref{e14}) deduce the third item. 
Here, the orientation of  jump contour is clockwise, and as general notation, we call the left/right side of the jump contour as $+/-$ side.
As follows, we obtain the reconstructed formula from (\ref{e9}), (\ref{e12}), (\ref{e15s}) and (\ref{e14}) that the reconstructed formula is
\begin{align}\label{e15}
	q_n(t)=M_{1,2}(0,n+1,t).
\end{align}
\begin{rhp}\label{r2.1}
	Find a $2\times2$ matrix-valued function $M$ such that
	\begin{itemize}
		\item $M$ is analytic on $\Sigma$.
		\item As $\lambda\to\infty$, $M\sim I+O(\lambda^{-1})$.
		\item On $\lambda\in\Sigma$,
		\begin{align*}
			M_+=M_-V,\quad V\equiv V(\lambda,n,t)=\left[\begin{matrix}
				1-|r(\lambda)|^2&-\overline{r(\lambda)}e^{-it\phi(\lambda,n,t)}\\r(\lambda)e^{it\phi(\lambda,n,t)}&1
			\end{matrix}\right]. 
		\end{align*}
	\end{itemize}
\end{rhp}

\section{Long-time asymptotic analysis on $-1<-V_0\le\xi\le V_0<1$}\label{s3}
\indent

In this section, we study the case for the region $-1<-V_0\le\xi\le V_0<1$. In this case, we derive that there are two first-order stationary phase points $S_1$ and $S_2$ on the unit circle $\Sigma$:
therefore, we obtain that
\begin{align}\label{e17s}
	&\phi(\lambda,n,t)-\phi(S_j,n,t)\sim\frac{\phi''(S_j,n,t)}{2}(\lambda-S_j)^2+\oo(\lambda-S_j)^3,\\
	 &\quad\phi''(S_j,n,t)=(-1)^{j}S_j^{-2}\sqrt{1-\xi^2}.\notag
\end{align}

\begin{figure}
	\centering\includegraphics[width=0.4\linewidth]{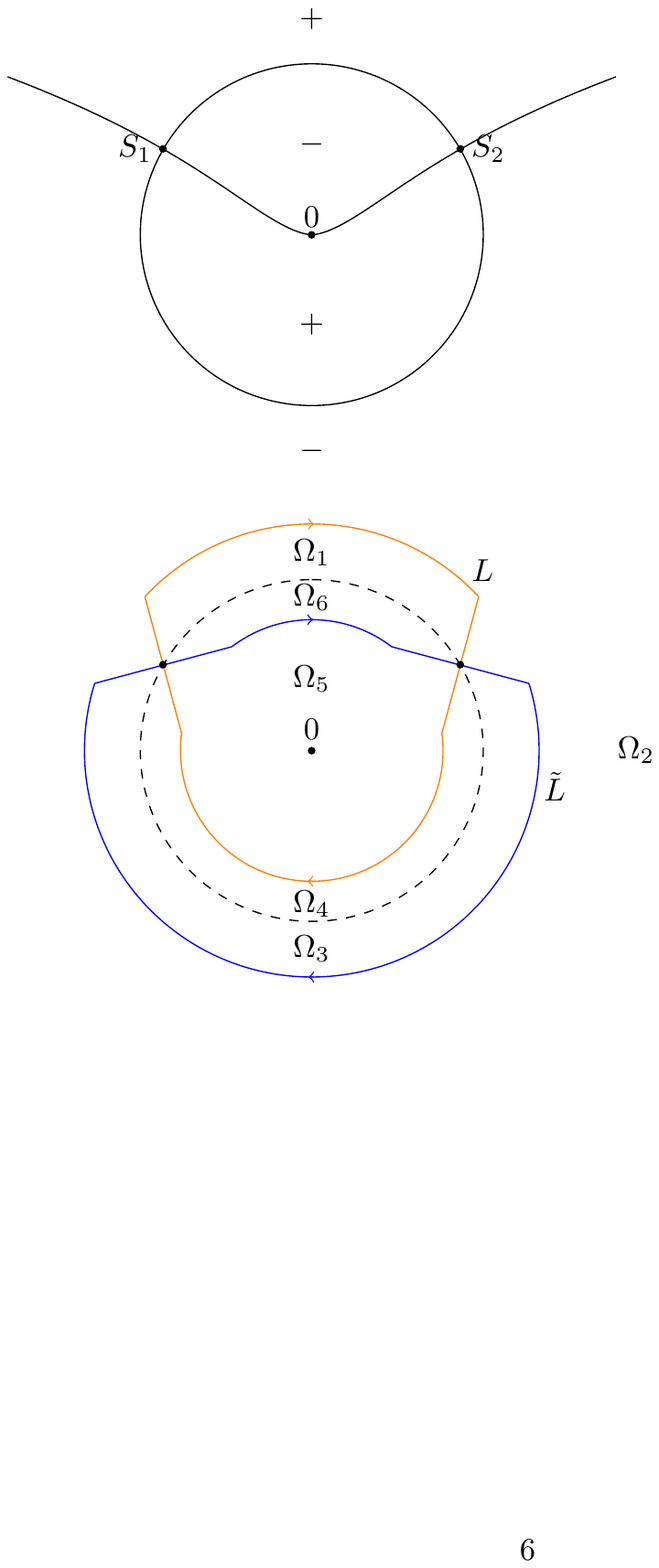}
	\caption{The sign of $\im\phi$ in region I; $S_1$, $S_2$ the stationary phase point}\label{f2}
\end{figure}

\subsection{Deformation on the jump contour}
\indent

In this part, we study an RH problem transform: $M\rightsquigarrow M^{(1)}=M\delta^{-\sigma_3}$;
and the new jump matrix admits a proper factorization, seeing in RH problem \ref{r3.2}. 

Introducing the scalar function
\begin{align}\label{e16}
	\delta\equiv\delta(\lambda)=e^{\frac{1}{2\pi i}\int_{S_2}^{S_1}\frac{\ln(1-|r(s)|^2)}{s-\lambda}\mathrm{d}s},
\end{align}
where the integral is along the lower-half arc on $\Sigma$: $\wideparen{S_2S_1}$ and this arc's orientation is from $S_2$ to $S_1$ along $\Sigma$, we derive that $\delta$ admits properties shown in Proposition \ref{p3.1}
\begin{proposition}\label{p3.1}
	Since $r\in H^1_\theta(\Sigma)$, the scalar function $\delta$ satisfies: 
	\begin{enumerate}[label=(\alph*)]
		\item 
		 $\delta$ is analytic on $\mathbb{C}\setminus \wideparen{S_2S_1}$.
		\item 
		As $\lambda\to\infty$, $\delta(\lambda)\sim 1+O(\lambda^{-1})$.
		\item On the arc $\lambda\in\wideparen{S_2S_1}$. $\delta_+(\lambda)=\delta_-(\lambda)(1-|r(\lambda|^2))$.
		\item On $\lambda\in\mathbb{C}\setminus\wideparen{S_2S_1}$, $\delta$ admits the symmetry:
		\begin{align*}
			\delta(\lambda)=\overline{\delta(0)/\delta(\bar\lambda^{-1})}.
			\end{align*}
		\item On the neighborhood of $S_1$ and $S_2$, $\delta$ admits following asymptotic properties:
		\begin{align*}
			&\delta(\lambda)\sim\left(\frac{\lambda-S_1}{\lambda-S_2}\right)^{i\nu_j}e^{\alpha_j(S_j)}+O(|\lambda-S_j|^{\frac{1}{2}}),\quad \lambda\to S_j,\\
			&\quad \alpha_j(\lambda)=\frac{1}{2\pi i}\int_{S_2}^{S_1}\frac{\ln(1-|r(s)|^2)-\ln(1-|r(S_j)|^2)}{s-\lambda}\ddddd s,\\
			&\quad \nu_j=-\frac{\ln(1-|r(S_j)|^2)}{2\pi},\quad\quad j=1,2. 
		\end{align*}
		
	\end{enumerate}
\end{proposition}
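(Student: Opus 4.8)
The plan is to establish the five properties of the scalar function $\delta$ defined in (\ref{e16}) directly from its integral representation, treating each item in turn. The overall strategy rests on the classical Plemelj--Sokhotski formula applied to the Cauchy-type integral over the arc $\wideparen{S_2S_1}$; since $r\in H^1_\theta(\Sigma)$, the density $\ln(1-|r(s)|^2)$ is well-behaved by (\ref{e12s}), which guarantees $1-|r|^2>0$ so the logarithm is real and finite, and by the Sobolev embedding $r$ is $\tfrac12$-H\"older continuous, which will be the key regularity input for the endpoint analysis.

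First I would dispatch items (a) and (b). Analyticity on $\mathbb{C}\setminus\wideparen{S_2S_1}$ is immediate because the Cauchy kernel $\tfrac{1}{s-\lambda}$ is holomorphic in $\lambda$ for $\lambda$ off the contour of integration, and differentiation under the integral sign is justified by dominated convergence once we know the density is integrable. For the behavior at infinity, I would expand $\tfrac{1}{s-\lambda}=-\lambda^{-1}(1+s\lambda^{-1}+\cdots)$, so the exponent tends to $0$ like $O(\lambda^{-1})$ and hence $\delta(\lambda)=1+O(\lambda^{-1})$. For the jump relation (c), I would apply Plemelj's formula: writing $\delta=e^{\Phi}$ with $\Phi$ the Cauchy integral, the boundary values satisfy $\Phi_+-\Phi_-=\ln(1-|r|^2)$ on the arc, which exponentiates to $\delta_+=\delta_-(1-|r|^2)$. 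The symmetry (d) I would obtain by substituting $s\mapsto \bar s^{-1}$ in the integral and using the symmetry (\ref{e9}) of the scattering data, which forces $|r(s)|^2$ to be invariant under $s\mapsto \bar s^{-1}$ on $\Sigma$; combining this with complex conjugation of the kernel yields the stated relation $\delta(\lambda)=\overline{\delta(0)/\delta(\bar\lambda^{-1})}$.

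The main obstacle is item (e), the local behavior of $\delta$ near the endpoints $S_1$ and $S_2$. Here I would add and subtract the endpoint value of the density, splitting the integral as
\begin{align*}
\frac{1}{2\pi i}\int_{S_2}^{S_1}\frac{\ln(1-|r(s)|^2)}{s-\lambda}\,\ddddd s
=\frac{\ln(1-|r(S_j)|^2)}{2\pi i}\int_{S_2}^{S_1}\frac{\ddddd s}{s-\lambda}+\alpha_j(\lambda),
\end{align*}
where $\alpha_j$ carries the regularized density. The first integral evaluates explicitly to a combination of logarithms $\ln(\lambda-S_1)-\ln(\lambda-S_2)$, whose exponential produces the singular factor $\big(\tfrac{\lambda-S_1}{\lambda-S_2}\big)^{i\nu_j}$ with $\nu_j$ as defined. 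The delicate point is controlling the remainder: I must show that $\alpha_j(\lambda)\to\alpha_j(S_j)$ with an error of order $O(|\lambda-S_j|^{1/2})$ as $\lambda\to S_j$. This is precisely where the $\tfrac12$-H\"older continuity of $r$ enters, since the numerator $\ln(1-|r(s)|^2)-\ln(1-|r(S_j)|^2)$ vanishes like $|s-S_j|^{1/2}$, rendering the singularity of the kernel integrable and giving the claimed H\"older exponent for the remainder. Exponentiating the sum and keeping the leading term yields the asymptotic formula in (e); care must be taken to track that the lower-order contributions combine into the stated $O(|\lambda-S_j|^{1/2})$ error after Taylor-expanding the exponential.
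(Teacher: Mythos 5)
Your proposal is correct, and items (a)--(c) match the paper in substance (standard Cauchy-integral properties plus Plemelj), but for (d) and (e) you take genuinely different routes. For (d), the paper verifies that $\overline{\delta(0)/\delta(\bar\lambda^{-1})}$ satisfies the same scalar RH problem determined by (a)--(c) and concludes by uniqueness of its solution, whereas you compute directly in the integral representation. Your computation does close, but not quite for the reason you give: on $\Sigma$ the map $s\mapsto\bar s^{-1}$ is the identity, so ``invariance of $|r|^2$'' via (\ref{e9}) is vacuous; what the substitution actually uses is $\ddddd\bar s=-s^{-2}\ddddd s$ on the circle together with the partial fraction $\frac{\lambda}{s(\lambda-s)}=\frac{1}{s}-\frac{1}{s-\lambda}$, which produce $\overline{\delta(\bar\lambda^{-1})}=\delta(0)/\delta(\lambda)$, and finally the reality of $\delta(0)$ (on $\Sigma$ one has $\ddddd s/s=i\,\ddddd\theta$ and the density $\ln(1-|r|^2)$ is real), which turns that identity into the stated symmetry; without the last two observations the factor $\delta(0)$ never appears. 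For (e), both proofs use the identical decomposition --- endpoint value of the density giving the explicit power $\left(\frac{\lambda-S_1}{\lambda-S_2}\right)^{i\nu_j}$, plus the regularized integral $\alpha_j$ --- and differ only in the key lemma estimating $\alpha_j(\lambda)-\alpha_j(S_j)$. You go through the Sobolev embedding $H^1_\theta(\Sigma)\hookrightarrow C^{1/2}(\Sigma)$ and invoke the classical (Muskhelishvili-type) endpoint estimate for a Cauchy integral whose H\"older density vanishes at an end of the contour; this is valid, but you should either cite that result or supply the splitting argument showing the bound is uniform in the direction of approach to $S_j$ (the near-tangential case is the delicate one, and uniformity matters later since $\lambda$ approaches $S_j$ along the $45^\circ$ rays of $\Sigma_j$). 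The paper instead maps $\wideparen{S_2S_1}$ onto $(-\infty,0]$ by a M\"obius change of variables, observes that the transplanted density $F$ lies in $H^1(\mathbb{R})$ with $F(0)=0$, and quotes Lemma 23.3 of Beals--Coifman to get the bound $\lesssim\parallel F\parallel_{H^1}|\lambda'|^{\frac{1}{2}}$ for all non-real $\lambda'$. The trade-off: your route is more elementary and self-contained in classical H\"older theory, while the paper's stays entirely inside the $H^1$ framework, gets uniformity for free from the cited lemma, and produces a constant controlled explicitly by the $H^1$ norm of $r$, which is the quantity the rest of the paper's estimates are phrased in.
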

\begin{proof}
	(a) comes from the analyticity of $Y^\pm(\lambda,n,t)$, $a(\lambda)$ and (\ref{e14}). 
	(b) comes from (\ref{e13}) and (\ref{e14}). 
	(c) comes from (\ref{e7}) and (\ref{e8}). 
	Since $\delta(\lambda)$ satisfies (a-c) which determine an RH problem with the jump contour $\wideparen{S_2S_1}$, we also verify by computation that $\overline{\delta(0)/\delta(\bar\lambda^{-1})}$ is also the solution of this RH problem; 
	then, by uniqueness of solution, we derive (d). 
	Changing the variable on $\in\Sigma$: $\lambda\to\lambda'$, $s\to s'$,
	\begin{align*}
		\lambda=\sqrt{S_1S_2}\frac{\sqrt{S_1}+\sqrt{S_2}\lambda'}{\sqrt{S_2}+\sqrt{S_1}\lambda'},\quad s=\sqrt{S_1S_2}\frac{\sqrt{S_1}+\sqrt{S_2}s'}{\sqrt{S_2}+\sqrt{S_1}s'},
	\end{align*}
	we obtain that
	\begin{align}\label{e17}
		\int_{S_2}^{S_1}\frac{f(s)-f(S_1)}{(2\pi i)(s-\lambda)}\mathrm{d}s=\frac{\sqrt{S_2}+\sqrt{S_1}\lambda'}{2\pi i}\int_{-\infty}^{0}\frac{f(s)-f(S_1)\ddddd s'}{(\sqrt{S_2}+\sqrt{S_1}s')(s'-\lambda')}, 
	\end{align}
	where $f(s)=\ln(1-|r(s)|^2)$.
	Since $r(s)\in H^1_\theta(\Sigma)\subset L^\infty(\Sigma)$, it naturally follows that
	\begin{align}\label{e18}
		F(s')=\begin{cases}
			\frac{f(s)-f(S_1)}{\sqrt{S_2}+\sqrt{S_1}s'},&s'\le0,\\
			0,&s'>0,
		\end{cases}
	\end{align}
	belongs to $H^1(\mathbb{R})$ and 
	\begin{align*}
		F(s=0)=0;
	\end{align*}
	therefore, we apply Lemma 23.3 in \cite{beals1988direct} and get that for any $\lambda\in\mathbb{C}\setminus\mathbb{R}$
	\begin{align}
		&\Big|\int_{-\infty}^{+\infty}\frac{F(s')}{(2\pi i)(s'-\lambda')}\mathrm{d}s'-\int_{-\infty}^{+\infty}\frac{F(s')}{2\pi is'}\mathrm{d}s'\Big|\lesssim\parallel F\parallel_{H^1}|\lambda'|^{\frac{1}{2}}\label{e19}
	\end{align}
	Considering (\ref{e17}), (\ref{e18}) and (\ref{e19}), we obtain (e) for $j=1$, and the proof for $j=2$ is parallel.
	We confirm the result.
\end{proof}
\begin{remark}
	Seeing property (c) and (d) in Proposition \ref{p3.1}, we learn that for $\lambda\in\wideparen{S_2S_1}$, 
	\begin{align*}
		|\delta_+(\lambda)|^2=(1-|r(s)|^2)\delta(0),\quad |\delta_-(\lambda)|^2=\delta(0)(1-|r(\lambda)|^2)^{-1}.
	\end{align*}
	which are nonzero and finite by (\ref{e12s}) and (\ref{e17});
	moreover, $\delta(\lambda)\to1$ as $\lambda\to\infty$;
	so, $\delta$ and $\delta^{-1}$ are both bounded function by (a) in Proposition \ref{p3.1} and the maximal modular theorem. 
	
\end{remark}

Here, we confirm that a new $2\times2$ matrix-valued function
\begin{align}\label{e20}
	M^{(1)}=M\delta^{-\sigma_3}, 
\end{align}
satisfies RH problem \ref{r3.2}, which is the natural consequence of RH problem \ref{r2.1}, (a-c) in Proposition \ref{p3.1} and (\ref{e20}). 
\begin{rhp}\label{r3.2}
	Find a $2\times2$ matrix-valued function such that
	\begin{itemize}
		\item $M^{(1)}$ is analytic on $\mathbb{C}\setminus\Sigma$.
		\item As $\lambda\to\infty$, $M^{(1)}(\lambda,n,t)\sim I+O(\lambda^{-1})$.
		\item On $\lambda\in\Sigma$,
		\begin{align*}
			M^{(1)}_+=M^{(1)}_-V^{(1)},\quad 
		\end{align*}
		where $V^{(1)}\equiv V^{(1)}(\lambda,n,t)$ is the jump matrix
		\begin{align*}
			&V^{(1)}=\begin{cases}
				\left[\begin{matrix}
					1&-\delta_+^2\frac{\overline{r(\lambda)}}{1-|r(\lambda)|^2}e^{-it\phi(\lambda,n,t)}\\
					\delta_-^{-2}\frac{r(\lambda)}{1-|r(\lambda)|^2}e^{it\phi(\lambda,n,t)}&1-|r(\lambda)|^2
				\end{matrix}\right]\\
		\left[\begin{matrix}
			1-|r(\lambda)|^2&-\overline{r(\lambda)}\delta^2(\lambda)e^{-it\phi(\lambda,n,t)}\\r(\lambda)\delta^{-2}(\lambda)e^{it\phi(\lambda,n,t)}&1
		\end{matrix}\right]
	\end{cases}\\
&\quad =\begin{cases}
	\left[\begin{matrix}
		1&0\\
		\frac{\delta_-^{-2}r(\lambda)}{1-|r(\lambda)|^2}e^{it\phi(\lambda,n,t)}&1
	\end{matrix}\right]\left[\begin{matrix}
	1&-\frac{\delta_+^2\overline{r(\lambda)}}{1-|r(\lambda)|^2}e^{-it\phi(\lambda,n,t)}\\
	0&1
\end{matrix}\right]&\lambda\in\wideparen{S_2S_1},\\
	\left[\begin{matrix}
		1&-\overline{r(\lambda)}\delta^2(\lambda)e^{-it\phi(\lambda,n,t)}\\0&1
	\end{matrix}\right]\left[\begin{matrix}
	1&0\\r(\lambda)\delta^{-2}(\lambda)e^{it\phi(\lambda,n,t)}&1
\end{matrix}\right]&\lambda\in\Sigma\setminus\wideparen{S_2S_1}.
\end{cases}
		\end{align*}
	\end{itemize}
\end{rhp}

\subsection{Split the circle}\label{s3.2}
\indent

In this part, we introduce a new RH problem transform: $M^{(1)}\rightsquigarrow M^{(2)}$.
As a result, the jump contour $\Sigma$ is deformed into $\Sigma^{(2)}$, where the jump matrix is decaying.

Introducing the jump contour $\Sigma^{(2)}=L\cup\tilde L$ as shown in Figure \ref{f3}, we see that $L$ and $\tilde L$ are both closed curves consisting of straight line segments and arcs centering at the origin. 
As for orientation of the jump contour, we denote it in  Figure \ref{f3}. 
Again seeing Figure \ref{f3}, the complex plane is divided into six parts by $\Sigma^{(2)}$ and $\Sigma$: $\Omega_j$, $j=1,\dots,6$. 
\begin{figure}
	\centering\includegraphics[width=0.5\linewidth]{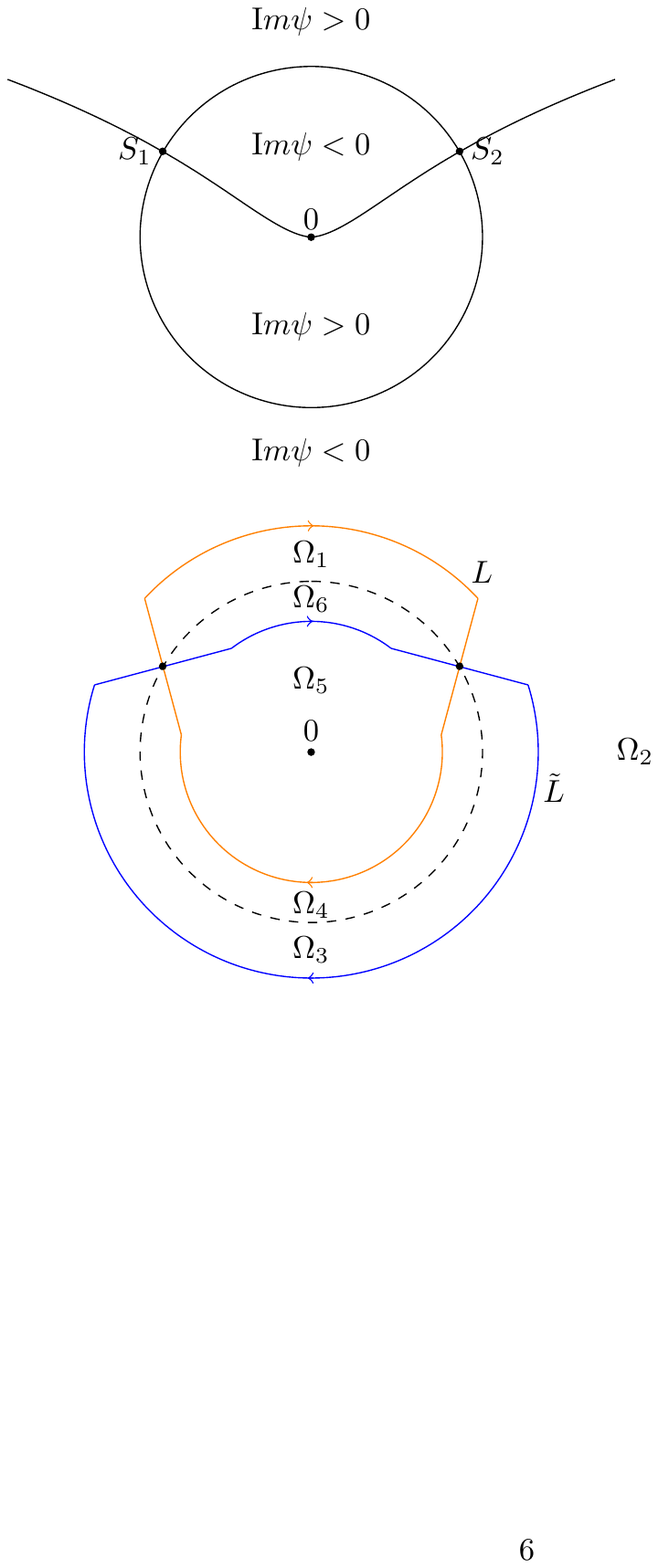}
	\caption{Jump contour $\Sigma^{(2)}=L\cup \tilde L$.}\label{f3}
\end{figure}
Since we have constructed the jump contour, introduce a $2\times2$ matrix-valued function
\begin{align}\label{e23}
	M^{(2)}=M^{(1)}\pp,
\end{align}
where the $2\times2$ matrix-valued function $\pp$ is invertible and written as
\begin{align}\label{e24}
	\pp\equiv\pp(\lambda,n,t)=\begin{cases}\left[\begin{matrix}
			1&0\\-R_1(\lambda)\delta^{-2}(\lambda)e^{it\phi(\lambda,n,t)}&1
		\end{matrix}\right]&\lambda\in\Omega_1,\\
		\left[\begin{matrix}
			1&R_3(\lambda)\delta^2(\lambda)e^{-it\phi(\lambda,n,t)}\\0&1
		\end{matrix}\right]&\lambda\in\Omega_3,\\
		\left[\begin{matrix}
			1&0\\R_4(\lambda)\delta^{-2}(\lambda)e^{it\phi(\lambda,n,t)}&1
		\end{matrix}\right]&\lambda\in\Omega_4,\\
		\left[\begin{matrix}
			1&-R_6(\lambda)\delta^{2}(\lambda)e^{-it\phi(\lambda,n,t)}\\0&1
		\end{matrix}\right]&\lambda\in\Omega_6,\\
		I&\lambda\in\Omega_2\cup\Omega_5
	\end{cases}
\end{align}
and we denote $\lambda=\rho e^{i\theta}$, $\rho\ge0$, $\theta\in[0,2\pi]$,  and
\begin{align}\label{e25s}
	R_1(\lambda)=r(\theta),\quad R_3(\lambda)=\frac{\overline{r(\theta)}}{1-|r(\theta)|^2},\quad R_4(\lambda)=\frac{r(\theta)}{1-|r(\theta)|^2},\quad R_6(\lambda)=\overline{r(\theta)}.
\end{align}
Recalling (\ref{e12s}) and that the reflection coefficients $r$ is bounded on $\Sigma$, we see by (\ref{e25s}) that $R_1(\lambda)$, $R_3(\lambda)$, $R_4(\lambda)$ and $R_6(\lambda)$ are bounded on $\mathbb{C}$. 
Then, as a consequence of RH problem \ref{r3.2} and (\ref{e23}), we deduce that $M^{(2)}$ satisfies $\bar\partial$-RH problem \ref{r3.3}, and the jump matrix  admits the lower/upper triangular factorization:
\begin{align*}
	V^{(2)}=(1-w_-)^{-1}(1+w_+),
\end{align*}
where we denote
\begin{align}
	&w=w_++w_-,\\
	&w_+=\begin{cases}
		\textbf{0}&L,\\
		\left[\begin{matrix}
			0&-\delta^2(\lambda)R_3(\lambda)e^{-it\phi(\lambda,n,t)}\\0&0
		\end{matrix}\right]&\tilde L\cap D_+,\\
		\left[\begin{matrix}
			0&-\delta^2(\lambda)R_6(\lambda)e^{-it\phi(\lambda,n,t)}\\0&0
		\end{matrix}\right]&\tilde L\cap D_-,
	\end{cases}\label{e27}\\
	&w_-=\begin{cases}
		\left[\begin{matrix}
			0&0\\\delta^{-2}(\lambda)R_1(\lambda)e^{it\phi(\lambda,n,t)}&0
		\end{matrix}\right]&L\cap D_+,\\
		\left[\begin{matrix}
			0&0\\\delta^{-2}(\lambda)R_4(\lambda)e^{it\phi(\lambda,n,t)}&0
		\end{matrix}\right]&L\cap D_-,\\
		\textbf{0}&\tilde L.\label{e28}
	\end{cases}
\end{align}
and it is easy to check that $w_\pm$ are $2\times2$ nilpotent matrices. 
\begin{drhp}\label{r3.3}
	Find a $2\times2$ matrix-valued function $M^{(2)}$ such that
	\begin{itemize}
		\item $M^{(2)}$ belongs to $C^0(\mathbb{C}\setminus(L\cup\tilde L))$ and its first-order partial derivatives are continuous on $\mathbb{C}\setminus(\Sigma\cup L\cup\tilde L)$.
		\item As $\lambda\to\infty$, $M^{(2)}\sim I+\oo(\lambda^{-1})$.
		\item On $\lambda\in\Sigma$, 
		\begin{align*}
			M^{(2)}_+=M^{(2)}_-V^{(2)},
		\end{align*}
		where
		\begin{align*}
			V^{(2)}\equiv V^{(2)}(\lambda,n,t)=\begin{cases}
				\left[\begin{matrix}
					1&0\\R_1(\lambda)\delta^{-2}(\lambda)e^{it\phi(\lambda,n,t)}&1
				\end{matrix}\right]&\lambda\in L\cap D_+,\\
			\left[\begin{matrix}
			1&-R_3(\lambda)\delta^2(\lambda)e^{-it\phi(\lambda,n,t)}\\0&1
		\end{matrix}\right]&\lambda\in \tilde L\cap D_+,\\
	\left[\begin{matrix}
	1&0\\R_4(\lambda)\delta^{-2}(\lambda)e^{it\phi(\lambda,n,t)}&1
\end{matrix}\right]&\lambda\in L\cap D_-,\\
\left[\begin{matrix}
1&-R_6(\lambda)\delta^{2}(\lambda)e^{-it\phi(\lambda,n,t)}\\0&1
\end{matrix}\right]&\lambda\in\tilde L\cap D_-.
			\end{cases}
		\end{align*}
		\item On $\mathbb{C}\setminus(\Sigma\cup L\cup\tilde L)$,
		\begin{align*}
			\bar\partial M^{(2)}=M^{(2)}\bar\partial \pp,
		\end{align*}
		where $\bar\partial\pp$ is a nilpotent matrix and
		\begin{align*}
			\bar\partial\pp(\lambda,n,t)=\begin{cases}\left[\begin{matrix}
					0&0\\-\bar\partial R_1(\lambda)\delta^{-2}(\lambda)e^{it\phi(\lambda,n,t)}&0
				\end{matrix}\right]&\lambda\in\Omega_1,\\
				\left[\begin{matrix}
					0&\bar\partial R_3(\lambda)\delta^2(\lambda)e^{-it\phi(\lambda,n,t)}\\0&0
				\end{matrix}\right]&\lambda\in\Omega_3,\\
				\left[\begin{matrix}
					0&0\\\bar\partial R_4(\lambda)\delta^{-2}(\lambda)e^{it\phi(\lambda,n,t)}&0
				\end{matrix}\right]&\lambda\in\Omega_4,\\
				\left[\begin{matrix}
					0&-\bar\partial R_6(\lambda)\delta^{2}(\lambda)e^{-it\phi(\lambda,n,t)}\\0&0
				\end{matrix}\right]&\lambda\in\Omega_6,\\
				\textbf{0}&\lambda\in\Omega_2\cup\Omega_5.
			\end{cases}
		\end{align*}
	\end{itemize}
\end{drhp}

\subsection{Deformation of the $\bar\partial$-RH problem}\label{s3.3}
\indent

In this part, we deform the solution for $\bar\partial$-RH problem \ref{r3.3} into the product of solutions for RH problem \ref{r3.4} and $\bar\partial$-problem \ref{d3.6}: 
\begin{align}\label{e22}
	M^{(2)}=M^{(2,D)}M^{(2,R)},
\end{align}
where $M^{(2,R)}$ admits the same jump condition as $M^{(2)}$'s, and $M^{(2,D)}$ is continuous over complex plane $\mathbb{C}$. 
Seeing from RH problem \ref{r3.4}, $M^{(2,R)}$ has no pole and on $\lambda\in L\cup\tilde L$,
\begin{align*}
	\det M^{(2,R)}_+(\lambda,n,t)=\det M^{(2,R)}_-(\lambda,n,t);
\end{align*}
so, it is analytic in the whole complex plane; moreover, we see that as $\lambda\to \infty$,
\begin{align*}
	\det M^{(2,R)}(\lambda,n,t)\sim 1+\oo(\lambda^{-1}),
\end{align*}
which deduce by Liouville's Theorem that
\begin{align*}
	\det M^{(2,R)}\equiv1,
\end{align*}
and $M^{(2,R)}$ is invertible. 
Since $M^{(2,R)}$ is invertible, we obtain that (\ref{e22}) is well-defined. 
\begin{rhp}\label{r3.4}
	Find a $2\times2$ matrix-valued function $M^{(2,R)}$ such that:
	\begin{itemize}
		\item $M^{(2,R)}$ is analytic on $\mathbb{C}\setminus L\cap\tilde L$.
		\item As $\lambda\to\infty$, $M^{(2,R)}\sim I+\oo(\lambda^{-1})$.
		\item On $\lambda\in L\cap\tilde L$, $M^{(2,R)}_+=M^{(2,R)}_-V^{(2)}$.
		
	\end{itemize}
\end{rhp}

\begin{dbarproblem}\label{d3.6}
	Find a $2\times2$ matrix-valued function $M^{(2,D)}$ such that
	\begin{itemize}
		\item $M^{(2,D)}$ belongs to $C^0(\mathbb{C})$ and its first-order partial derivatives are continuous on $\mathbb{C}\setminus(\Sigma\cap L\cap\tilde L)$.
		\item As $\lambda\to\infty$, $M^{(2,D)}\sim I+\oo(\lambda^{-1})$. 
		\item On $\lambda\in L\cap\tilde L$, $\bar\partial M^{(2,D)}=M^{(2,D)}\tilde\pp$, where $ \tilde\pp=M^{(2,R)}\bar\partial\pp (M^{(2,R)})^{-1}$.
	\end{itemize}
\end{dbarproblem}

Introducing a Cauchy-like operator $C_w$, for a $2\times2$ matrix-valued function $f$:
\begin{align*}
	&C_wf=C_+(fw_-)+C_-(fw_+), \quad w=w_++w_-,\\
	&C_\pm f(\lambda)=\lim_{\lambda'\to\lambda, \lambda' \text{ on $\pm$ side of $\Sigma^{(2)}$} }\int_{\Sigma^{(2)}}\frac{f(s)\ddddd s}{s-\lambda'},
\end{align*}
for RH problem \ref{r3.2}, since $(1-C_w)^{-1}\in\bb(L^2(\Sigma^{(2)}))$, which is shown in Section \ref{s3.9}, we obtain Beals-Coifman solutions:
\begin{align}\label{e25}
	M^{(2,R)}(\lambda,n,t)=I+\frac{1}{2\pi i}\int_{\Sigma}\frac{((1-C_w)^{-1}Iw)(s,n,t)\ddddd s}{s-\lambda}.
\end{align}

\subsection{RH problems at stationary phase points}\label{s3.4}
\indent

In this part, we introduce two crosses contained in $\Sigma^{(2)}$ associated to the two stationary phase points.
And then, by these crosses, we analyze the solution of RH problem \ref{r3.4} at $\lambda=0$. 
We find that as $t\to+\infty$, the leading terms is related to RH problems on each of the crosses, and the remaining part decays not faster than $\oo(t^{-1})$, i.e. the result in (\ref{e60}). 

\begin{figure}
	\centering\includegraphics{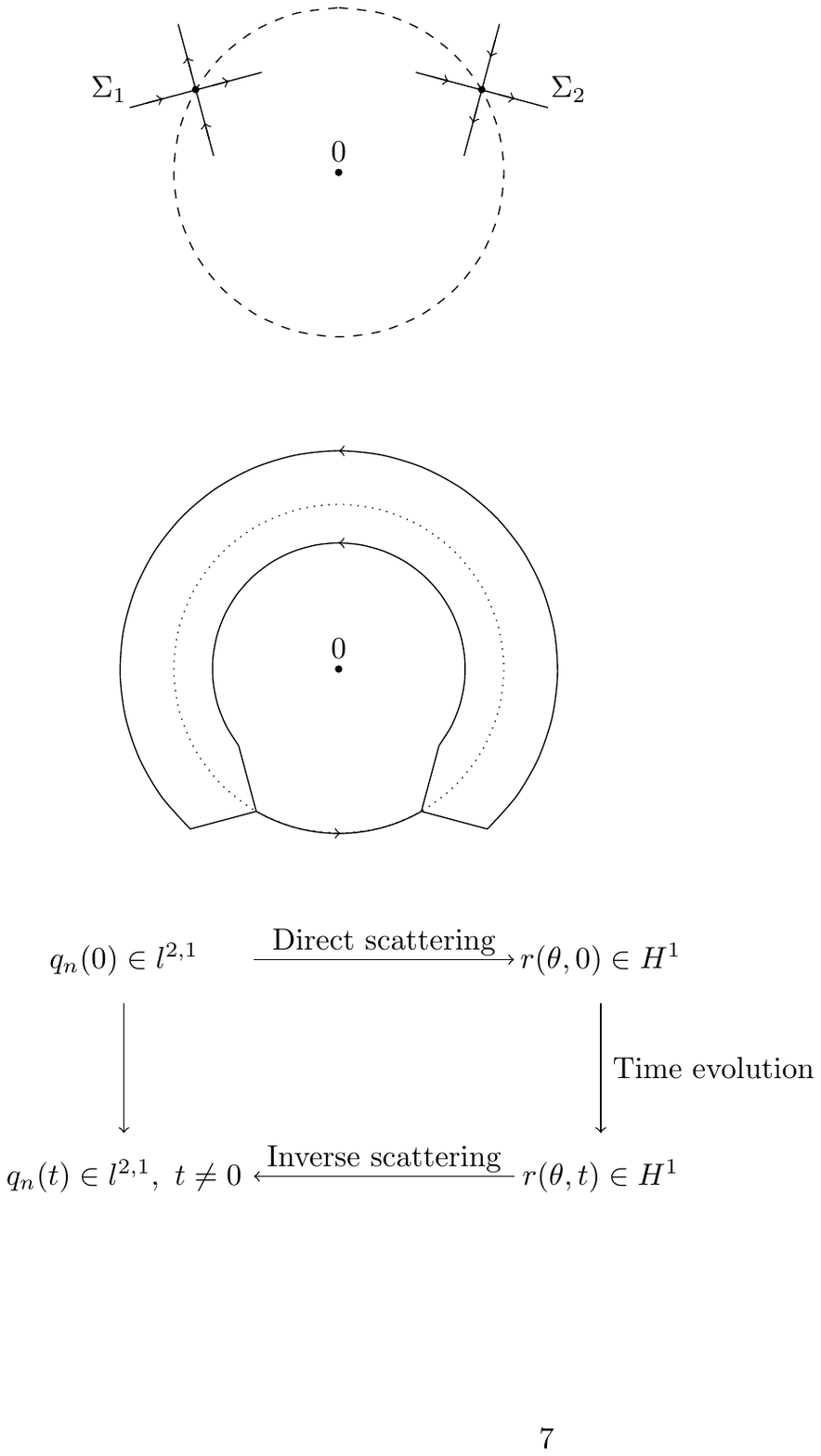}
	\caption{The jump contour $\Sigma'=\Sigma_1\cup\Sigma_2$.}
\end{figure}

Set $\epsilon_0>0$ fixed and introduce contours
\begin{align*}
	&\Sigma_1=\cup_{l=1}^4\Sigma_{1l},\quad\Sigma_{1l}=\left\{S_1(1+xe^{\frac{2l-3}{4}\pi i}):x\in[0,\epsilon_0)\right\},\\
	&\Sigma_2=\cup_{l=1}^4\Sigma_{2l},\quad\Sigma_{2l}=\left\{S_2(1+xe^{\frac{2l-3}{4}\pi i}):x\in[0,\epsilon_0)\right\}.
\end{align*}
We choose $\epsilon_0$ sufficiently small such that $\Sigma'\deff\Sigma_1\cup\Sigma_2\subset\Sigma^{(2)}$, and on $\lambda\in\Sigma'$,  (\ref{e17s}) guarantees that
\begin{subequations}\label{e31s}
	\begin{align}
		&\im\phi(\lambda,n,t)\ge x^2\sqrt{1-\xi^2},\quad \lambda=S_j(1+xe^{(-1)^{j}\frac{\pi i}{4}}),\quad x\in(-\epsilon_0,\epsilon_0),\\
		&\im\phi(\lambda,n,t)\le-x^2\sqrt{1-\xi^2},\quad \lambda=S_j(1+xe^{(-1)^{j-1}\frac{\pi i}{4}}),\quad x\in(-\epsilon_0,\epsilon_0).
	\end{align}
\end{subequations}
We also reasonably restrict $\epsilon_0$ such that for $\lambda\in\Sigma'$,
\begin{align}\label{e30}
	|\re\lambda|\ge\frac{1}{2}|\re S_1|=\frac{\sqrt{1-\xi^2}}{2}\ge\frac{\sqrt{1-V_0^2}}{2}.
\end{align}
Set $2\times2$ matrix-valued functions supported on $\Sigma'$:
\begin{align}\label{e31}
	&w'=w_-'+w_+',\quad w_\pm'=\begin{cases}
		w_\pm&\lambda\in\Sigma',\\
		0&\lambda\in\Sigma^{(2)}\setminus\Sigma'.
	\end{cases}
\end{align}
Similar to $C_w$, we also define a Cauchy-like operator $C_{w'}$, and in Section \ref{s3.9}, we show that $(1-C_{w'})^{-1}\in\bb(L^2(\Sigma^{(2)}))$ for sufficiently large $t$. 
Taking $\lambda=0$ in (\ref{e25}), since $1-C_{w}$ and $1-C_{w'}$ is invertible on $L^2(\Sigma^{(2)})$, we have by second resolvent identity that
\begin{align}\label{e32}
	&M^{(2,R)}(0,n,t)=I+\frac{1}{2\pi i}\int_{\Sigma^{(2)}}\frac{((1-C_w)^{-1}Iw)(s,n,t)\ddddd s}{s}\notag\\
	&\quad =I+\frac{1}{2\pi i}\int_{\Sigma^{(2)}}\frac{((1-C_{w'})^{-1}Iw')(s,n,t)\ddddd s}{s}+I_1+I_2,
\end{align}
where
\begin{align*}
	&I_1=\frac{1}{2\pi i}\int_{\Sigma^{(2)}}s^{-1}[(1-C_w)^{-1}I(w-w')](s,n,t)\ddddd s,\\
	&I_2=\frac{1}{2\pi i}\int_{\Sigma^{(2)}}s^{-1}[(1-C_w)^{-1}C_{w-w'}(1-C_{w'})^{-1}Iw'](s,n,t)\ddddd s.
\end{align*}

\begin{lemma}\label{l3.7}
	Since $r\in H^1_\theta(\Sigma)$, $w_\pm$ and $w$ are bounded functions on $\Sigma^{(2)}$. 
	Moreover, setting $w^r=w-w'$ and $w_\pm^r=w_\pm-w_\pm'$ on $\Sigma^{(2)}$, we obtain that there is a positive constant $C>0$ such that for $\lambda\in\Sigma^{(2)}$:
	\begin{align*}
		|w_\pm^r(\lambda)|\lesssim e^{-Ct},\quad |w^r(\lambda)|\lesssim e^{-Ct}. 
	\end{align*}
\end{lemma}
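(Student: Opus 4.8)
The plan is to reduce both assertions to a single modulus computation for the entries of $w_\pm$, using three facts already in hand: $\delta^{\pm1}$ is bounded (the Remark after Proposition \ref{p3.1}); each of $R_1,R_3,R_4,R_6$ is bounded, since $r\in H^1_\theta(\Sigma)\subset L^\infty(\Sigma)$ and $1-|r|^2=c_{-\infty}/|a|$ is bounded below by (\ref{e12s}); and the contour deformation of Figure \ref{f2} places $L$ (which carries $e^{it\phi}$) in the region $\{\im\phi\ge0\}$ and $\tilde L$ (which carries $e^{-it\phi}$) in $\{\im\phi\le0\}$. The only genuinely quantitative input needed beyond these is a uniform lower bound on $|\im\phi|$ away from the stationary phase points.

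First I would read off from (\ref{e27})--(\ref{e28}) that the single nonzero entry of $w_\pm(\lambda)$ equals, up to sign, $\delta^{\pm2}(\lambda)R_k(\lambda)e^{\mp it\phi(\lambda,n,t)}$, and that in either case the exponential has modulus $e^{-t|\im\phi(\lambda,n,t)|}$: on $\tilde L$ one has $\im\phi\le0$, so $|e^{-it\phi}|=e^{t\im\phi}=e^{-t|\im\phi|}$, while on $L$ one has $\im\phi\ge0$, so $|e^{it\phi}|=e^{-t\im\phi}=e^{-t|\im\phi|}$. Combining this with the boundedness of $\delta^{\pm2}R_k$ gives
\begin{align*}
	|w_\pm(\lambda)|\lesssim e^{-t|\im\phi(\lambda,n,t)|}\le1,\quad \lambda\in\Sigma^{(2)},
\end{align*}
which yields the boundedness of $w_\pm$, and of $w$ via $|w|\le|w_+|+|w_-|$.

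For the decay estimate, the definition (\ref{e31}) shows that $w^r_\pm=w_\pm-w'_\pm$ vanishes on the crosses $\Sigma'$, hence is supported on $\Sigma^{(2)}\setminus\Sigma'$. It therefore suffices to produce a constant $C>0$ with $|\im\phi(\lambda,n,t)|\ge C$ for all $\lambda\in\Sigma^{(2)}\setminus\Sigma'$; the display above then gives $|w^r_\pm(\lambda)|\lesssim e^{-t|\im\phi|}\le e^{-Ct}$, and $|w^r|\le|w^r_+|+|w^r_-|\lesssim e^{-Ct}$. On the straight portions of $L\cup\tilde L$ lying beyond the crosses, the quadratic control (\ref{e31s}) persists and already gives $|\im\phi|\ge\epsilon_0^2\sqrt{1-\xi^2}\ge\epsilon_0^2\sqrt{1-V_0^2}$ at the cross boundary $x=\epsilon_0$, increasing outward. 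On the circular arcs, writing $\lambda=\rho e^{i\theta}$ one computes directly $\im\phi=(\rho-\rho^{-1})\sin\theta+2\xi\ln\rho$, which is bounded away from $0$ because the arcs keep $\rho$ bounded away from $1$ over their $\theta$-range.

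The main obstacle is precisely this uniform lower bound on $|\im\phi|$ across all of $\Sigma^{(2)}\setminus\Sigma'$, with a constant independent of $\xi\in[-V_0,V_0]$ and of $n,t$ (recall $\xi=n/2t$). On the segments, the factor $\sqrt{1-\xi^2}\ge\sqrt{1-V_0^2}>0$ supplies the uniformity directly. On the arcs one must verify that $\im\phi$ returns to $0$ only at the two endpoints lying on $\Sigma$, where the segments rather than the arcs are in force, and then invoke compactness of the closed arcs—uniformly over the compact range $\xi\in[-V_0,V_0]$—to upgrade pointwise nonvanishing of $|\im\phi|$ to a uniform positive lower bound. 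Confirming that the geometry of $L,\tilde L$ confines $\im\phi$ to the correct sign throughout, so that the crossing of $\{\im\phi=0\}$ occurs only on $\Sigma$, is the one step requiring care.
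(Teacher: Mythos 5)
Your proposal follows essentially the same route as the paper: boundedness comes from the bounded factors $\delta^{\pm2}R_k$ together with the sign of $\im\phi$ on $L$ and $\tilde L$, and the exponential decay of $w^r_\pm$ comes from the fact that $w^r_\pm$ is supported on $\Sigma^{(2)}\setminus\Sigma'$, a compact set on which $|\im\phi|$ is bounded below by a positive constant (the paper phrases this as: $L\setminus\Sigma'$ is compact in $\{\im\phi>0\}$ and $\tilde L\setminus\Sigma'$ is compact in $\{\im\phi<0\}$). Your extra attention to uniformity of the constant in $\xi\in[-V_0,V_0]$ is a refinement the paper leaves implicit, but the core argument is identical.
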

\begin{proof}
	Recalling that $\delta$, $\delta^{-1}$, $R_j$ are bounded functions on $\Sigma^{(2)}$, by (\ref{e27}) and (\ref{e28}), we only have to check the boundedness of $\im\phi$; 
	moreover, seeing Figure \ref{f2}, since $\im\phi>0$ on $L$ and $\im\phi<0$ on $\tilde L$, we obtain that $w_\pm$ are bounded on $\Sigma^{(2)}$. 
	As for the estimates of $w^r_\pm$, seeing Figure \ref{f2}, $L\setminus \Sigma'$ is a compact on $\{\im\phi>0\}$, then there is a constant $C>0$ such that
	\begin{align}
		\im\phi\big|_{L\setminus\Sigma'}\ge C;
	\end{align}
	similarly, choosing proper $C$, we also have
	\begin{align}
		\im\phi\big|_{\tilde L\setminus\Sigma'}\le -C;
	\end{align}
	therefore, we confirm results for $w^r_\pm$ and $w^r$. 
\end{proof}

The next task is to prove that as $t\to+\infty$
\begin{align}\label{e35s}
	I_1,I_2\sim \oo(t^{-1}), 
\end{align}
By Schwartz's inequality, the boundedness of $(1-C_w)^{-1}$ and $(1-C_{w'})^{-1}$ shown in Section \ref{s3.9}, the fact that $\Sigma^{(2)}$ is a compact set contained in $\mathbb{C}\setminus\{0\}$ and Lemma \ref{l3.7}, we obtain that as $t\to+\infty$
\begin{align}
	|I_1|\lesssim \parallel w^r\parallel_{L^2(\Sigma^{(2)})}\lesssim\parallel w^r\parallel_{L^\infty(\Sigma^{(2)})}\sim\oo(t^{-1}).
\end{align} 
Similarly for $|I_2|$, also by Schwartz's inequality, the result in Section \ref{s3.9} and Lemma \ref{l3.7}, we have that as $t\to+\infty$,
\begin{align}\label{e35}
	&|I_2|\lesssim\parallel C_{w^r}\parallel_{L^2\to L^2}\parallel I\parallel_{L^2(\Sigma^{(2)})}\parallel w'\parallel_{L^2(\Sigma^{(2)})}\notag\\
	&\quad\lesssim\parallel C_{w^r}\parallel_{L^2\to L^2}\lesssim\parallel w^r\parallel_{L^\infty(\Sigma^{(2)})}\sim \oo(t^{-1}).
\end{align} 
In (\ref{e35}), the second inequality is confirmed by the fact that $I$ and $w'$ are bounded on $\Sigma^{(2)}$, where the boundedness of $w'$ is supported by Lemma \ref{l3.7} and (\ref{e31}); 
the third inequality is also correct because for a $2\times2$ matrix function $f$ on $\Sigma^{(2)}$,
\begin{align}\label{e40}
	&\parallel C_{w^r}f\parallel_{L^2(\Sigma^{(2)})}=\parallel C_+(fw_-^r)+C_-(fw_+^r)\parallel_{L^2(\Sigma^{(2)})}\notag\\
	&\quad\lesssim \parallel fw_+\parallel_{L^2(\Sigma^{(2)})}+\parallel fw_-\parallel_{L^2(\Sigma^{(2)})}\lesssim \parallel f\parallel_{L^2(\Sigma^{(2)})}\parallel w^r\parallel_{L^\infty(\Sigma^{(2)})},
\end{align}
by the fact that Cauchy integral operators $C_\pm$ are bounded on the $L^2$ space.
Seeing (\ref{e32}) and (\ref{e35s}), we derive that as $t\to+\infty$,
\begin{align}\label{e39}
	M^{(2,R)}(0,n,t)&=I+\frac{1}{2\pi i}\int_{\Sigma’}\frac{((1-C_{w'})^{-1}Iw')(s,n,t)\ddddd s}{s}+\oo(t^{-1}).
\end{align}

The remaining work is to separate the contribution of each cross in $\Sigma'$. 
Introducing
\begin{align*}
	&w^j=w^j_-+w^j_+\quad j=1,2,\\
	&w^j_\pm=(\lambda,n,t)\begin{cases}
		w_\pm(\lambda,n,t)&\lambda\in\Sigma_j,\\
		0&\lambda\in\Sigma^{(2)}\setminus\Sigma_j,
	\end{cases}
\end{align*}
we deduce that
\begin{align*}
	w_\pm'=w^1_\pm+w^2_\pm. 
\end{align*}
and also define correspondent Cauchy-like operators $C_{w^j}$, $j=1,2$. 
By (3.17) in \cite{deift1993steepest}, we obtain 
\begin{align}\label{e43}
	&(1-C_{w'})(1+C_{w^1}(1-C_{w^1})^{-1}+C_{w^2}(1-C_{w^2})^{-1})\notag\\
	&\quad=(1-C_{w^2}C_{w^1}(1-C_{w^1})^{-1}-C_{w^1}C_{w^2}(1-C_{w^2})^{-1}).
\end{align}
Considering (\ref{e39}) and (\ref{e43}), we obtain that
\begin{align}\label{e44}
	&M^{(2,R)}(0,n,t)=I+\sum_{j=1,2}\frac{1}{2\pi i}\int_{\Sigma_j}s^{-1}[(1-C_{w^j})^{-1}Iw^j](s,n,t)\ddddd s\notag\\
	&\quad +I_3+I_4+I_5+\oo(t^{-1}). 
\end{align}
where
\begin{subequations}\label{e45}
	\begin{align}
		I_3=\frac{1}{2\pi i}\int_{\Sigma'}&[s^{-1}(1+C_{w^1}(1-C_{w^1})^{-1}+C_{w^2}(1-C_{w^2})^{-1})\notag\\
		&(1-C_{w^2}C_{w^1}(1-C_{w^1})^{-1}-C_{w^1}C_{w^2}(1-C_{w^2})^{-1})^{-1}\notag\\
		&(C_{w^2}C_{w^1}(1-C_{w^1})^{-1}+C_{w^1}C_{w^2}(1-C_{w^2})^{-1})Iw'](s,n,t)\ddddd s,\label{e45a}\\
		I_4=\frac{1}{2\pi i}\int_{\Sigma'}&s^{-1}[(1-C_{w^2})^{-1}C_{w^2}Iw^1](s,n,t)\ddddd s,\label{e45b}\\
		I_5=\frac{1}{2\pi i}\int_{\Sigma'}&s^{-1}[(1-C_{w^1})^{-1}C_{w^1}Iw^2](s,n,t)\ddddd s.\label{e45c}
	\end{align}
\end{subequations}
For $I_3$, considering $(1-C_{w^j})^{-1}\in\bb(L^2(\Sigma^{(2)}))$ for $j=1,2$ in Section \ref{s3.9}, by Lamma \ref{l3.10},
\begin{align*}
	(1-C_{w^2}C_{w^1}(1-C_{w^1})^{-1}-C_{w^1}C_{w^2}(1-C_{w^2})^{-1})^{-1}\in\bb(L^2(\Sigma^{(2)}));
\end{align*}
also, applying the technique used in (\ref{e40}) on $C_{w^j}$, we claim that $C_{w^j}$ belongs to $\bb(L^2(\Sigma^{(2)}))$, $j=1,2$; thus, we obtain by (\ref{e45a}) and Schwartz inequality that
\begin{align}\label{e49}
	|I_3|&\lesssim\parallel(C_{w^2}C_{w^1}(1-C_{w^1})^{-1}+C_{w^1}C_{w^2}(1-C_{w^2})^{-1})I\parallel_{L^2(\Sigma^{(2)})}\parallel w'\parallel_{L^2(\Sigma^{(2)})},\notag\\
	&\le(\parallel C_{w^2}C_{w^1}(1-C_{w^1})^{-1}I\parallel_{L^2(\Sigma^{(2)})}+\parallel C_{w^1}C_{w^2}(1-C_{w^2})^{-1}I\parallel_{L^2(\Sigma^{(2)})})\notag\\
	&\quad\times\parallel w'\parallel_{L^2(\Sigma^{(2)})}
\end{align}
By computation, applying (\ref{e41a}) in Lemma \ref{l3.10}, Lemma \ref{l3.9},  $(1-C_{w^1})^{-1}\in\bb(L^2(\Sigma^{(2)}))$ shown in Section \ref{s3.9} and the fact that
\begin{align}\label{e48}
	&\parallel C_{w^1}I\parallel_{L^2(\Sigma^{(2)})}\le\parallel C_+(w^1_-)\parallel_{L^2(\Sigma^{(2)})}+\parallel C_-(w^1_+)\parallel_{L^2(\Sigma^{(2)})}\notag\\
	&\quad\le\parallel w^1_-\parallel_{L^2(\Sigma^{(2)})}+\parallel w^1_+\parallel_{L^2(\Sigma^{(2)})}\lesssim t^{-\frac{1}{4}},
\end{align}
we obtain estimates:
\begin{align}\label{e50}
	&\parallel C_{w^2}C_{w^1}(1-C_{w^1})^{-1}I\parallel_{L^2(\Sigma^{(2)})}\le\parallel C_{w^2}C_{w^1}(1-C_{w^1})^{-1}C_{w^1}I\parallel_{L^2(\Sigma^{(2)})}\notag\\
	&\quad+\parallel C_{w^2}C_{w^1}I\parallel_{L^2(\Sigma^{(2)})}\le\parallel C_{w^2}C_{w^1}\parallel_{L^2\to L^2}\parallel (1-C_{w^1})^{-1}\parallel_{L^2\to L^2}\parallel C_{w^1}I\parallel_{L^2(\Sigma^{(2)})}\notag\\
	&\quad+\parallel C_{w^2}C_{w^1}\parallel_{L^\infty\to L^2}\parallel I\parallel_{L^\infty(\Sigma^{(2)})}
	\sim\oo(t^{-\frac{3}{4}}).
\end{align}
Similarly, we obtain that
\begin{align}\label{e51s}
	\parallel C_{w^2}C_{w^1}(1-C_{w^1})^{-1}I\parallel_{L^2(\Sigma^{(2)})}\sim\oo(t^{-\frac{3}{4}}).
\end{align}
By (\ref{e49}), (\ref{e50}), (\ref{e51s}) and Lemma \ref{l3.9}, we confirm that
\begin{align}\label{e52}
	I_3\sim\oo(t^{-1}). 
\end{align}
For $I_4$, we split the integral into two parts
\begin{align}\label{e53}
	I_4=\int_{\Sigma'}\frac{[C_{w^2}Iw^1](s,n,t)}{2\pi is}\ddddd s+\int_{\Sigma'}\frac{[C_{w^2}(1-C_{w^2})^{-1}C_{w^2}Iw^1](s,n,t)}{2\pi is}\ddddd s;
\end{align}
estimate the first part by Lemma \ref{l3.9},
\begin{align}\label{e55}
	&\Big|\int_{\Sigma'}\frac{[C_{w^2}Iw^1](s,n,t)}{2\pi is}\ddddd s\Big|
	\le\frac{1}{2\pi}\int_{\Sigma_1}\int_{\Sigma_2}\Big|\frac{w^2(s',n,t)w^1(s,n,t)}{s(s'-s)}\Big|\ddddd s'\ddddd s\notag\\
	&\quad\le\frac{1}{2\pi d\inf_{s\in\Sigma_1}|s|}\parallel w_1\parallel_{L^1(\Sigma_1)}\parallel w_2\parallel_{L^1(\Sigma_2)}\lesssim t^{-1};
\end{align}
estimate the second part by Schwartz inequality, the boundedness of $(1-C_{w^2})^{-1}$, Lemma \ref{l3.9} and (\ref{e48})
\begin{align}\label{e56}
	&\Big|\int_{\Sigma'}\frac{[C_{w^2}(1-C_{w^2})^{-1}C_{w^2}Iw^1](s,n,t)}{2\pi is}\ddddd s\Big|\notag\\
	&\quad \le\frac{1}{2\pi}\int_{\Sigma_1}\int_{\Sigma_2}\Big|\frac{[(1-C_{w^2})^{-1}C_{w^2}Iw^2](s',n,t)w^1(s,n,t)}{s(s'-s)}\Big|\ddddd s'\ddddd s\notag\\
	&\quad \le\frac{1}{2\pi d\inf_{s\in\Sigma_1}|s|}\parallel (1-C_{w^2})^{-1}C_{w^2}Iw^2\parallel_{L^1(\Sigma_2)}\parallel w^1\parallel_{L^1(\Sigma_1)}\notag\\
	&\quad\lesssim\parallel C_{w^2}I\parallel_2\parallel w^2\parallel_{L^2(\Sigma_2)}\parallel w^1\parallel_{L^1(\Sigma_1)}\lesssim t^{-1};
\end{align}
thus, by (\ref{e53}), (\ref{e55}) and (\ref{e56}), we have
\begin{align}\label{e57}
	I_4\sim \oo(t^{-1}). 
\end{align}
For $I_5$, we claim that
\begin{align}\label{e58}
	I_5\sim\oo(t^{-1}),
\end{align}
and the technique is parallel to that for $I_4$. 
By (\ref{e44}), (\ref{e52}), (\ref{e57}) and (\ref{e58}), we get
\begin{align}\label{e46}
	M^{(2,R)}(0,n,t)=&I+\sum_{j=1}^{2}\frac{1}{2\pi i}\int_{\Sigma_j}s^{-1}[(1-C_{w^j})^{-1}Iw^j](s,n,t)\ddddd s+\oo(t^{-1}).
\end{align}
\begin{lemma}\label{l3.9}
	For $r\in H^1_\theta(\Sigma)$, we obtain these estimates for $w^j$:
	\begin{align*}
		\parallel w^j\parallel_{L^1(\Sigma^{(2)})}\lesssim t^{-\frac{1}{2}},\quad \parallel w^j\parallel_{L^2(\Sigma^{(2)})}\lesssim t^{-\frac{1}{4}}. 
	\end{align*}
\end{lemma}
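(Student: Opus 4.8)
The plan is to reduce both estimates to a single one-dimensional Gaussian integral along each ray of the crosses $\Sigma_j$, exploiting the quadratic lower bounds on $\im\phi$ furnished by (\ref{e31s}). Since only the oscillatory factor $e^{\pm it\phi}$ carries the $t$-dependence, the whole estimate is driven by the sign and size of $\im\phi$ on $\Sigma'$.

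First I would record that all the non-oscillatory factors are bounded on $\Sigma^{(2)}$: by (\ref{e25s}), (\ref{e12s}) and $r\in L^\infty(\Sigma)$ the functions $R_1,R_3,R_4,R_6$ are bounded, and (as recalled in the proof of Lemma \ref{l3.7}) $\delta$ and $\delta^{-1}$ are bounded. Reading the entries of $w_\pm$ from (\ref{e27}) and (\ref{e28}), the only nonzero entry of $w^j_-$ carries $e^{it\phi}$ and is supported on the $L$-rays of the cross, where $\im\phi>0$, while the only nonzero entry of $w^j_+$ carries $e^{-it\phi}$ and is supported on the $\tilde L$-rays, where $\im\phi<0$. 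Hence, on every ray of $\Sigma_j$,
\begin{align*}
|w^j(\lambda,n,t)|\lesssim e^{-t|\im\phi(\lambda,n,t)|}.
\end{align*}
Parametrizing each ray by $x\in[0,\epsilon_0)$, (\ref{e31s}) gives $|\im\phi|\ge x^2\sqrt{1-\xi^2}\ge c\,x^2$ with $c\deff\sqrt{1-V_0^2}>0$ uniform over the sector, so that $|w^j(\lambda,n,t)|\lesssim e^{-ct x^2}$.

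Next I would use the explicit parametrization $\lambda=S_j(1+x e^{\frac{2l-3}{4}\pi i})$, $x\in[0,\epsilon_0)$, for the ray $\Sigma_{jl}$; since $|S_j|=1$, the arc-length element is $|\ddddd\lambda|=\ddddd x$. Summing over the four rays and comparing with full half-line Gaussian integrals then yields
\begin{align*}
\parallel w^j\parallel_{L^1(\Sigma^{(2)})}\lesssim\int_0^{+\infty}e^{-ct x^2}\ddddd x=\frac{1}{2}\sqrt{\frac{\pi}{ct}}\lesssim t^{-\frac{1}{2}},\qquad \parallel w^j\parallel_{L^2(\Sigma^{(2)})}^2\lesssim\int_0^{+\infty}e^{-2ct x^2}\ddddd x\lesssim t^{-\frac{1}{2}},
\end{align*}
and taking the square root of the second bound gives $\parallel w^j\parallel_{L^2(\Sigma^{(2)})}\lesssim t^{-\frac{1}{4}}$, as claimed.

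The integration itself is routine; the only genuine care required is the bookkeeping of matching each triangular factor ($w_-$ on $L$, $w_+$ on $\tilde L$) to the ray on which the corresponding exponent $\pm\im\phi$ is \emph{negative}, so that $e^{-ct x^2}$ truly decays rather than grows. This matching is exactly what (\ref{e31s}) and the sign diagram in Figure \ref{f2} encode, and the uniformity of the constant $c$ in the sector follows from $\sqrt{1-\xi^2}\ge\sqrt{1-V_0^2}$. I would also note that this lemma requires only the boundedness of $r$, not its finer $\frac{1}{2}$-H\"older regularity.
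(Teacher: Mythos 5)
Your proof is correct and follows essentially the same route as the paper's: bound the non-oscillatory factors ($\delta$, $\delta^{-1}$, $R_j$), use (\ref{e31s}) to obtain the Gaussian bound $|w^j(\lambda,n,t)|\lesssim e^{-t\sqrt{1-\xi^2}\,|\lambda-S_j|^2}$ on $\Sigma_j$, and integrate over the four rays. The paper leaves the sign-matching of $w_\pm$ to $L/\tilde L$ and the Gaussian integration implicit, which you spell out explicitly, but the argument is the same.
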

\begin{proof}
	Recalling that $\delta$, $\delta^{-1}$, $R_j$ are bounded functions, by (\ref{e27}), (\ref{e28}) and (\ref{e31s}), we obtain that on $\lambda\in\Sigma_j$,
	\begin{align*}
		|w^{j}(\lambda,n,t)|\lesssim e^{-t\sqrt{1-\xi^2}x^2}, \quad x=|\lambda-S_j|,
	\end{align*}
	by which we confirm the result since $w^j$ is supported only on $\Sigma_j$. 
\end{proof}
\begin{lemma}\label{l3.10}
	For $r\in H^1_\theta(\theta)$, we claim that $C_{w^1}C_{w^2}$, $C_{w^1}C_{w^2}$ belong to $\bb(L^2(\Sigma^{(2)}))$ and  $\bb(L^\infty(\Sigma^{(2)}),L^2(\Sigma^{(2)}))$, and we have the following estimate
	\begin{subequations}
		\begin{align}
			&\parallel C_{w^1}C_{w^2}\parallel_{L^2\to L^2}\lesssim t^{-\frac{1}{2}},\quad \parallel C_{w^1}C_{w^2}\parallel_{L^\infty\to L^2}\lesssim t^{-\frac{3}{4}},\label{e41a}\\
			&\parallel C_{w^2}C_{w^1}\parallel_{L^2\to L^2}\lesssim t^{-\frac{1}{2}},\quad \parallel C_{w^2}C_{w^1}\parallel_{L^\infty\to L^2}\lesssim t^{-\frac{3}{4}}.
		\end{align}
	\end{subequations}
	
\end{lemma}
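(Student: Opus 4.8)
The plan is to exploit the \emph{disjointness} of the supports of $w^1$ and $w^2$. By construction $w^j_\pm$ is supported on the cross $\Sigma_j$, and since the two stationary phase points are distinct, $|S_1-S_2|=2\sqrt{1-\xi^2}\ge 2\sqrt{1-V_0^2}>0$; choosing $\epsilon_0$ small enough (as already required in the construction of $\Sigma'$) we can guarantee that $\Sigma_1$ and $\Sigma_2$ are separated by a fixed positive distance $d\deff\mathrm{dist}(\Sigma_1,\Sigma_2)>0$, uniformly in $\xi\in[-V_0,V_0]$ and in $t$. This is the geometric fact that makes the composition small: the ``cross'' Cauchy kernel $1/(s-\lambda)$ with $s\in\Sigma_2$ and $\lambda\in\Sigma_1$ is non-singular and bounded by $1/d$. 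Since the four displayed estimates exhibit finite operator norms, they automatically yield the claimed memberships in $\bb(L^2(\Sigma^{(2)}))$ and $\bb(L^\infty(\Sigma^{(2)}),L^2(\Sigma^{(2)}))$.

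First I would reduce each composition to a single off-contour integral. For any $f$, write $g=C_{w^2}f=C_+(fw^2_-)+C_-(fw^2_+)$. Because $fw^2_\pm$ is supported on $\Sigma_2$ and $\Sigma_1\cap\Sigma_2=\varnothing$, the $\pm$ boundary values coincide on $\Sigma_1$, so for $\lambda\in\Sigma_1$,
\[
g(\lambda)=\int_{\Sigma_2}\frac{f(s)\,w^2(s)}{s-\lambda}\,\ddddd s,\qquad |s-\lambda|\ge d .
\]
On the other hand $C_{w^1}g=C_+(gw^1_-)+C_-(gw^1_+)$ only sees $g$ through $gw^1_\pm$, which is supported on $\Sigma_1$; hence, using the $L^2$-boundedness of $C_\pm$,
\[
\parallel C_{w^1}C_{w^2}f\parallel_{L^2(\Sigma^{(2)})}\lesssim \parallel g\,w^1\parallel_{L^2(\Sigma_1)}\le \parallel g\parallel_{L^\infty(\Sigma_1)}\parallel w^1\parallel_{L^2(\Sigma_1)} .
\]
Thus everything is reduced to the sup-norm of $g$ on $\Sigma_1$, which is controlled by the non-singular integral above.

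For the two target bounds I would estimate $\parallel g\parallel_{L^\infty(\Sigma_1)}$ in two ways, always using $|s-\lambda|\ge d$. For $f\in L^2$, Cauchy–Schwarz gives $|g(\lambda)|\le d^{-1}\parallel f\parallel_{L^2}\parallel w^2\parallel_{L^2(\Sigma_2)}$, and combined with $\parallel w^1\parallel_{L^2},\parallel w^2\parallel_{L^2}\lesssim t^{-1/4}$ from Lemma \ref{l3.9} this yields $\parallel C_{w^1}C_{w^2}\parallel_{L^2\to L^2}\lesssim t^{-1/2}$. For $f\in L^\infty$, Hölder gives $|g(\lambda)|\le d^{-1}\parallel f\parallel_{L^\infty}\parallel w^2\parallel_{L^1(\Sigma_2)}$, and together with $\parallel w^2\parallel_{L^1}\lesssim t^{-1/2}$ and $\parallel w^1\parallel_{L^2}\lesssim t^{-1/4}$ from Lemma \ref{l3.9} this gives $\parallel C_{w^1}C_{w^2}\parallel_{L^\infty\to L^2}\lesssim t^{-3/4}$. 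The estimates for $C_{w^2}C_{w^1}$ follow verbatim by interchanging the roles of $\Sigma_1$ and $\Sigma_2$, proving all four bounds.

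The only real obstacle is the first reduction: one must verify that the boundary-value operators $C_\pm$ collapse to an ordinary (non-singular) integral when evaluated on the \emph{other} cross, i.e. that $\Sigma_1$ and $\Sigma_2$ are genuinely separated and that $d$ is bounded below \emph{uniformly} in $\xi$ and $t$. Both follow from $|S_1-S_2|\ge 2\sqrt{1-V_0^2}$ and the fixed, $t$-independent choice of $\epsilon_0$; once this is secured, the remaining steps are routine Cauchy–Schwarz/Hölder bookkeeping fed by the rates in Lemma \ref{l3.9}.
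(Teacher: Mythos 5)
Your proposal is correct and follows essentially the same route as the paper's proof: both exploit the uniform positive distance $d$ between the disjoint crosses $\Sigma_1$ and $\Sigma_2$ to replace the inner Cauchy transform by a non-singular integral with kernel bounded by $1/d$, and then combine the $L^2$-boundedness of $C_\pm$ with the bounds $\parallel w^j\parallel_{L^1}\lesssim t^{-\frac{1}{2}}$, $\parallel w^j\parallel_{L^2}\lesssim t^{-\frac{1}{4}}$ of Lemma \ref{l3.9} via H\"older/Cauchy--Schwarz, yielding exactly the rates $t^{-\frac{1}{2}}$ and $t^{-\frac{3}{4}}$. The only cosmetic difference is that the paper first invokes the triangular (nilpotent) structure of $w^j_\pm$ to reduce $C_{w^1}C_{w^2}f$ to the two terms $C_+(C_-(fw^2_+)w^1_-)+C_-(C_+(fw^2_-)w^1_+)$, whereas you keep $g=C_{w^2}f$ intact and observe that its $\pm$ boundary values coincide on $\Sigma_1$; after that the estimates are term-for-term the same.
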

\begin{proof}
	For any $2\times2$ matrix-valued function $f\in L^2(\Sigma^{(2)})$, we obtain that
	\begin{align}\label{e41}
		C_{w^1}C_{w^2}f=C_+(C_-(fw^2_+)w^1_-)+C_-(C_+(fw^2_-)w^1_+).
	\end{align}
	By the formula
	\begin{align*}
		(C_-(fw^2_+)w^1_-)(s_1)=\frac{1}{2\pi i}\int_{\Sigma_2}\frac{f(s_2)w^2_+(s_2)w^1_-(s_1)}{s_2-s_1}\ddddd s_2
	\end{align*}
	ope and Lemma \ref{l3.9}, the $L^2$-norm of the first term in (\ref{e41}) satisfies  that
	\begin{align}\label{e44s}
		&\parallel C_+(C_-(fw^2_+)w^1_-)\parallel_{L^2(\Sigma^{(2)})}\le \frac{1}{2\pi}\left( \int_{\Sigma_1}\Big|\int_{\Sigma_2}\frac{f(s_2)w^2_+(s_2)w^1_-(s_1)}{s_2-s_1}\ddddd s_2\Big|^2\ddddd s_1\right)^{\frac{1}{2}}\notag\\
		&\quad\le\frac{1}{2\pi d}\parallel fw^2_+\parallel_{L^1(\Sigma^{(2)})}\parallel w^1_-\parallel_{L^2(\Sigma^{(2)})}\notag\\
		&\quad\lesssim\begin{cases}
			\parallel f\parallel_{L^\infty(\Sigma^{(2)})}\parallel w^2_+\parallel_{L^1(\Sigma^{(2)})}\parallel w^1_-\parallel_{L^2(\Sigma^{(2)})}\lesssim t^{-\frac{3}{4}}\parallel f\parallel_{L^\infty(\Sigma^{(2)})},\\
			\parallel f\parallel_{L^2(\Sigma^{(2)})}\parallel w^2_+\parallel_{L^2(\Sigma^{(2)})}\parallel w^1_-\parallel_{L^2(\Sigma^{(2)})}\lesssim t^{-\frac{1}{2}}\parallel f\parallel_{L^2(\Sigma^{(2)})},
		\end{cases}
	\end{align}
where $d$ denotes the distance of $\Sigma_1$ and $\Sigma_2$. 
For the second term on the right of (\ref{e41}), we obtain the similar estimate,
\begin{align}\label{e45s}
	\parallel C_+(C_-(fw^2_+)w^1_-)\parallel_{L^2(\Sigma^{(2)})}\lesssim\begin{cases}
		t^{-\frac{3}{4}}\parallel f\parallel_{L^\infty(\Sigma^{(2)})},\\
		t^{-\frac{1}{2}}\parallel f\parallel_{L^2(\Sigma^{(2)})}.
	\end{cases}
\end{align}
By (\ref{e41}), (\ref{e44s}) and (\ref{e45s}), we confirm the result of $C_{w^1}C_{w^2}$, and that of $C_{w^2}C_{w^1}$ is similarly checked.
\end{proof}

\begin{figure}
	\centering\includegraphics[width=0.5\linewidth]{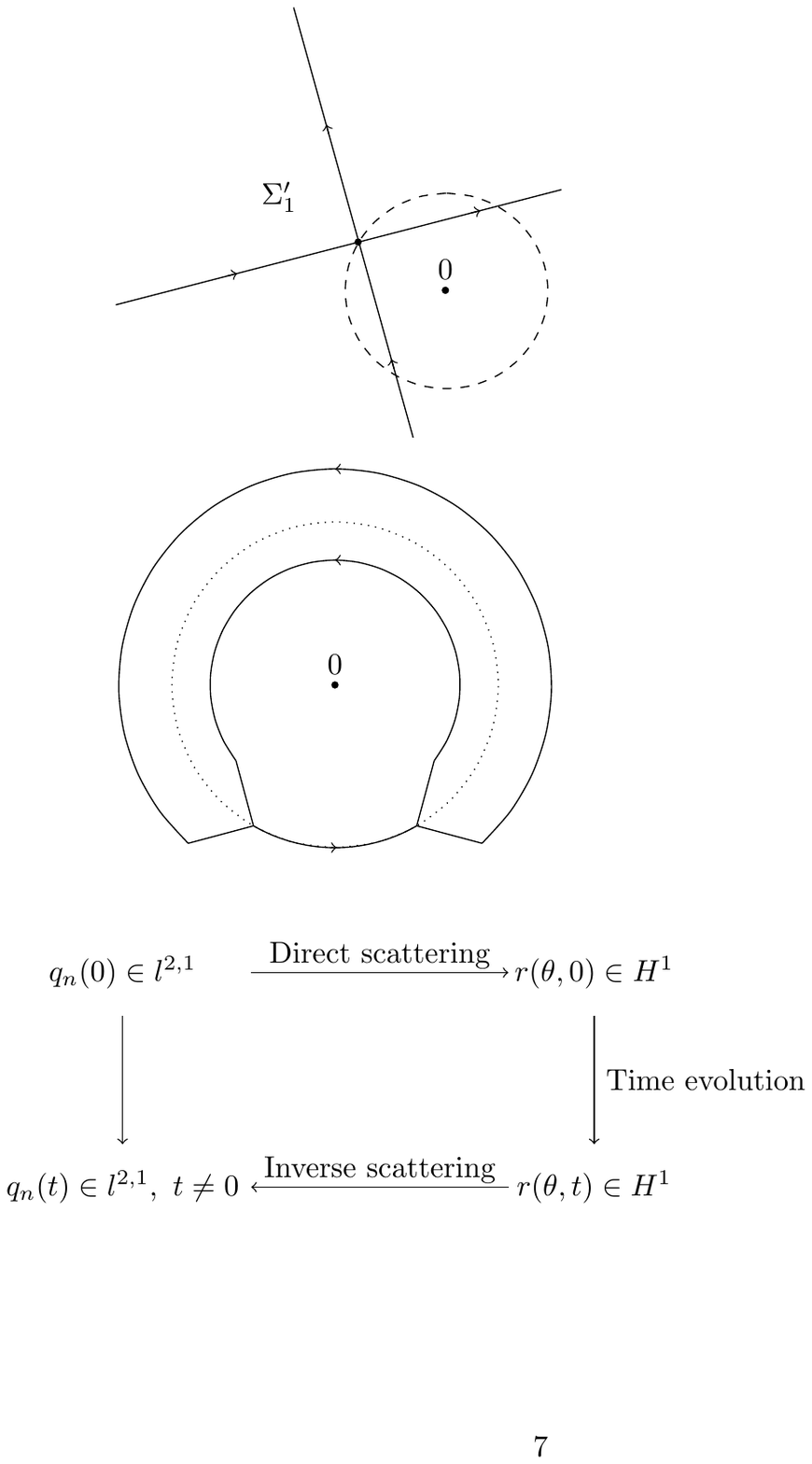}
	\caption{The infinite cross $\Sigma_1'$.}\label{f7}
\end{figure}

Define infinite crosses
\begin{align*}
	\Sigma_j'=S_j((1+e^{\frac{\pi i}{4}}\mathbb{R})\cup(1+e^{-\frac{\pi i}{4}}\mathbb{R}))\quad j=1,2.
\end{align*}
Here, we give the infinite crosses the orientation consisting with that of $\Sigma_j$, example as shown in Figure \ref{f7}.
Set $\check{w}^j_\pm$ the zero extension of $w^j_\pm\big|_{\Sigma_j}$ on $\Sigma_j'$ and $\check{w}^j=\check{w}^j_++\check{w}^j_-$; then, define Cauchy-like integral operators $A_j$: $L^2(\Sigma_j')\to L^2(\Sigma_j')$, $f\mapsto A_jf$,
\begin{align}
	&A_jf=C_+^{\Sigma_j'}(f\check{w}_-)+C_-^{\Sigma_j'}(f\check{w}_+),\label{e51}\\
	&\quad C_\pm^{\Sigma_j'}f(\lambda)=\lim_{\lambda'\to\lambda,\lambda' \text{ on $\pm$ sides of } \Sigma_j'}\int_{\Sigma_j'}\frac{f(s)\ddddd s}{s-\lambda'};\notag
\end{align}
combining (\ref{e46}) and (\ref{e51}), we claim that
\begin{align}\label{e60}
	M^{(2,R)}(0,n,t)=&I+\sum_{j=1}^{2}\frac{1}{2\pi i}\int_{\Sigma_j'}s^{-1}[(1-A_j)^{-1}I\check{w}^j](s,n,t)\ddddd s+\oo(t^{-1}).
\end{align}

\subsection{Scaling and rotation}\label{s3.5}
\indent

Here, we firstly introduce the scaling operator $N_j$ according to each cross, $j=1,2$; 
secondly, we apply these scaling operators to some functions and obtain the oscillatory part $\delta_{j0}$;
thirdly, by these scaling operators, we obtain the Cauchy-like integral operators $\tilde A_j$ and their limits as $t\to+\infty$: $\tilde A_j^\infty$;
finally, by proper evaluations and estimates, we rewrite formula (\ref{e60}) by $\tilde A_j^\infty$, which results in a long-time asymptotic formula (\ref{e79s}) related to $M^{(2,R)}$. 
From (\ref{e79s}), we see that the leading term in the long-time asymptotic formula is determined by the model RH problem at the stationary phase point.
The model RH problem is shown in the later part.

Define infinite crosses
	\begin{align*}
		 \tilde\Sigma_j=e^{\frac{\pi i}{4}}\mathbb{R}\cup e^{-\frac{\pi i}{4}}\mathbb{R},\quad j=1,2,
	\end{align*}
and scaling mapping 
\begin{align}\label{e61}
	N_j: f(\lambda)\mapsto N_jf(\zeta)=f(\beta_j\zeta+S_j). 
\end{align}
It follows from direct computation that $N_j$ belongs to $\bb(L^2(\Sigma_j'),L^2(\tilde\Sigma_j))$ and invertible; moreover, its norm satisfies that
\begin{align*}
	\parallel N_j\parallel_{L^2\to L^2}=|\beta_j|^{-\frac{1}{2}}=t^\frac{1}{4}(1-\xi^2)^\frac{1}{8}.
\end{align*}
Seeing the definition of $\Sigma_j'$ and $\tilde\Sigma_j$, we have the relationship
\begin{align*}
	\Sigma_j'=\beta_j\tilde\Sigma_j+S_j,
\end{align*}
and also set the orientation of $\tilde\Sigma_j$ consisting with that of $\Sigma_j'$ by mapping: $\tilde\Sigma_j\to\Sigma_j'$, $\zeta\mapsto\beta_j\zeta+S_j$.  

By direct computation, we obtain that
\begin{subequations}\label{e64}
	\begin{align}
		&N_j(\delta^2 e^{-it\phi})(\zeta)=\zeta^{(-1)^{j-1}2i\nu_j}e^{(-1)^{j}\frac{i}{2}\zeta^2}\delta^2_{j0}\delta_{j1}(\zeta),\\
		&\quad\delta_{j0}=e^{\alpha_j(S_j)-\frac{it}{2}\phi(S_j)}\left(\frac{(-1)^{j-1}\beta_j}{S_1-S_2}\right)^{(-1)^{j-1}i\nu_j},\\
		&\quad \delta_{j1}(\zeta)=\left(\frac{S_1-S_2}{(-1)^{j-1}\beta_1\zeta+S_1-S_2}\right)^{2(-1)^{j-1}i\nu_1}\notag\\
		&\quad\quad \times e^{2(\alpha_j(\beta_j\zeta+S_j)-\alpha_j(S_j))-it(\phi(\beta_j\zeta+S_j,n,t)-\phi(S_j,n,t))+(-1)^{j-1}\frac{i}{2}\zeta^2},\label{e65c}
	\end{align}
\end{subequations}
where $\delta_{j0}$ is the oscillatory part and $\delta_{j1}(\zeta)$ admits limits at $\zeta\to0$ as shown in Lemma \ref{l3.11}. 
\begin{lemma}\label{l3.11}
	For $r\in H^1_\theta(\Sigma)$, we have $\delta_{11}(\zeta)\to 1$ as $\zeta\to0$, and on $\zeta\in\beta_j^{-1}(\Sigma_j-S_j)$
	\begin{align}\label{e65}
		|\delta_{j1}(\zeta)-1|\lesssim t^{-\frac{1}{4}}|\zeta|^\frac{1}{2},\quad |\delta_{j1}^{-1}(\zeta)-1|\lesssim t^{-\frac{1}{4}}|\zeta|^\frac{1}{2}.
	\end{align}
\end{lemma}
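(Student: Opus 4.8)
The plan is to write $\delta_{j1}(\zeta)=e^{h_j(\zeta)}$ and to split the exponent into the three pieces dictated by (\ref{e65c}),
\[
h_j(\zeta)=\underbrace{2(-1)^{j-1}i\nu_j\ln\frac{S_1-S_2}{(-1)^{j-1}\beta_j\zeta+S_1-S_2}}_{\text{(I)}}+\underbrace{2\bigl(\alpha_j(\beta_j\zeta+S_j)-\alpha_j(S_j)\bigr)}_{\text{(II)}}+\underbrace{\Bigl(-it\bigl(\phi(\beta_j\zeta+S_j,n,t)-\phi(S_j,n,t)\bigr)+(-1)^{j-1}\tfrac{i}{2}\zeta^2\Bigr)}_{\text{(III)}},
\]
to bound each piece by $\oo(t^{-\frac14}|\zeta|^{\frac12})$ on the rescaled cross $\beta_j^{-1}(\Sigma_j-S_j)$, and then to convert these into the two claimed estimates through the elementary inequality $|e^{\pm z}-1|\le|z|e^{|z|}$. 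The limit $\delta_{j1}(\zeta)\to1$ as $\zeta\to0$ is then immediate, since $h_j(0)=0$: at $\zeta=0$ the logarithm in (I) is $\ln1$, and (II), (III) vanish because their two arguments coincide.

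For (I) I would use that $-1<-V_0\le\xi\le V_0<1$ keeps $|S_1-S_2|=2\sqrt{1-\xi^2}$ bounded below and $\nu_j$ bounded, while $|\beta_j\zeta|=|\lambda-S_j|\le\epsilon_0$ on the cross, so $|\ln(1+w)|\lesssim|w|$ gives $|\text{(I)}|\lesssim|\beta_j||\zeta|$. Since $|\beta_j|\asymp t^{-\frac12}$ and $t^{-\frac14}|\zeta|^{\frac12}$ is bounded on the rescaled cross, this yields $|\text{(I)}|\lesssim t^{-\frac12}|\zeta|=(t^{-\frac14}|\zeta|^{\frac12})^2\lesssim t^{-\frac14}|\zeta|^{\frac12}$. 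Piece (II) is where the hypothesis $r\in H^1_\theta(\Sigma)$ enters, through the $\tfrac12$-H\"older continuity of $\alpha_j$: I would reuse the Beals--Coifman estimate (Lemma 23.3 of \cite{beals1988direct}) already invoked in the proof of Proposition \ref{p3.1}(e), which gives $|\alpha_j(\lambda)-\alpha_j(S_j)|\lesssim|\lambda-S_j|^{\frac12}=|\beta_j|^{\frac12}|\zeta|^{\frac12}\asymp t^{-\frac14}|\zeta|^{\frac12}$, exactly the target rate.

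The hard part will be (III), the rescaled phase. Here I would first use that the scaling parameter $\beta_j$ is chosen precisely so that the quadratic part of $-it(\phi(\lambda)-\phi(S_j))$ is cancelled against the added $(-1)^{j-1}\tfrac{i}{2}\zeta^2$; what remains is $-it$ times the cubic Taylor remainder of $\phi$ at $S_j$, so Taylor's theorem on the compact cross gives the pointwise bound $|\text{(III)}|\lesssim t|\lambda-S_j|^{3}=t|\beta_j|^{3}|\zeta|^{3}\asymp t^{-\frac12}|\zeta|^{3}$. The delicate point is that this cubic estimate only beats $t^{-\frac14}|\zeta|^{\frac12}$ when $|\zeta|$ is not too large; to keep control as $|\zeta|$ approaches the edge of the rescaled cross I would invoke the sign information on $\im\phi$ recorded in (\ref{e31s}), namely the quadratic one-sided bounds on each ray of $\Sigma_j$, which force $\re(\text{III})$ to stay controlled so that the factor $e^{|h_j|}$ does not overwhelm the estimate. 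Feeding the combined bound $|h_j|\lesssim t^{-\frac14}|\zeta|^{\frac12}$ into $|e^{\pm h_j}-1|\le|h_j|e^{|h_j|}$ then produces both inequalities in (\ref{e65}) simultaneously, and the case $j=2$ follows by the identical argument once the role swaps encoded by the factor $(-1)^{j-1}$ are taken into account.
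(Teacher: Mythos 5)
Your decomposition is exactly the one the paper's proof uses: the three factors of $\delta_{j1}$ displayed in (\ref{e65c}) --- the power factor, the increment of $\alpha_j$, and the rescaled phase remainder --- with the same Lipschitz bound for the first, the same H\"older-$\frac12$ bound for the second via (\ref{e17})--(\ref{e19}) (Lemma 23.3 of \cite{beals1988direct}), and the same cubic Taylor bound for the third; these are precisely the three estimates (\ref{e59}), and your handling of pieces (I) and (II) is correct and identical to the paper's. You are in fact more explicit than the paper about how the pieces are recombined, and you are right to single out (III) as the delicate one.

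The gap is in your treatment of (III), and your proposed patch does not close it. What (\ref{e31s}) provides is one-sided information: on the half-lines of the rescaled cross where each of $\delta_{j1}^{\pm1}$ actually enters $\tilde w^j_\pm$, it forces the real part of the exponent of that factor to be nonpositive, hence that factor is bounded there. It does not bound the \emph{modulus} of (III), which is dominated by its imaginary, purely oscillatory part: since $\phi'''(S_j)\neq0$, $|(\mathrm{III})|$ is genuinely of size $t|\lambda-S_j|^3\asymp t^{-\frac12}|\zeta|^3$. Even granting $\re(\mathrm{III})\le0$, the sharpest elementary inequality available, $|e^{z}-1|\le|z|$ for $\re z\le 0$, still returns only $t^{-\frac12}|\zeta|^3$, and $t^{-\frac12}|\zeta|^3\le t^{-\frac14}|\zeta|^{\frac12}$ holds only for $|\zeta|\le t^{\frac{1}{10}}$, whereas the rescaled cross extends to $|\zeta|\asymp\epsilon_0t^{\frac12}$. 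Concretely, at $|\zeta|\sim t^{\frac14}$ the phase error has size $\sim t^{\frac14}$, so $e^{(\mathrm{III})}$ winds around the unit circle and $|\delta_{j1}^{\pm1}(\zeta)-1|$ is of order $1$, while the claimed bound is $t^{-\frac18}=o(1)$; the obstruction in this intermediate regime is oscillation, not growth, and no control of $\re(\mathrm{III})$ can address it. Your sentence asserting the ``combined bound $|h_j|\lesssim t^{-\frac14}|\zeta|^{\frac12}$'' is therefore unjustified. In fairness, the paper's own proof has exactly the same hole: it juxtaposes (\ref{e59}) and concludes, which is valid only on $|\zeta|\lesssim t^{\frac{1}{10}}$. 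The statement that is both provable and sufficient downstream is the restricted one --- the rate $t^{-\frac14}|\zeta|^{\frac12}$ for $|\zeta|$ up to a small power of $t$, plus mere boundedness of $\delta_{j1}^{\pm1}$ on the rays where they are used (which is what your sign argument really yields) --- because in the only place the lemma is invoked, the bound (\ref{e72}) in Proposition \ref{p3.11}, everything is multiplied by the Gaussian $e^{-\frac{|\zeta|^2}{2}}$, which renders the regime of large $|\zeta|$ harmless.
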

\begin{proof}
	We first detail the proof for the first formula in (\ref{e65}) with $j=1$ and $\zeta\in\beta_1^{-1}(\Sigma_{11}-S_1)$. 
	Since $\left(\frac{S_1-S_2}{\lambda-S_2}\right)^{2i\nu_1}$ and $it(\phi(\lambda,n,t)-\phi(S_1,n,t))-\frac{\phi''(S_j,n,t)}{2}(\lambda-S_j)^2$ are both holomorphic on $\lambda\in\Sigma_{11}$, we deduce that
	\begin{subequations}\label{e59}
	\begin{align}
		&\Big|\left(\frac{S_1-S_2}{\lambda-S_2}\right)^{2i\nu_1}-1\Big|\lesssim |\lambda-S_1|,\\
		&|it(\phi(\lambda,n,t)-\phi(S_1,n,t))-2\phi''(S_1,n,t)(\lambda-S_1)^2|\lesssim t|\lambda-S_1|^3;
	\end{align}
	moreover, seeing (\ref{e17}) and (\ref{e19}), we derive that on $\lambda\in\Sigma_{11}$,
	\begin{align}
		|\alpha_1(\lambda)-\alpha_1(S_1)|\lesssim |\lambda-S_1|^{\frac{1}{2}};
	\end{align}
\end{subequations}
	thus, by (\ref{e61}), (\ref{e65c}) and (\ref{e59}), we obtain that
	\begin{align*}
		|\delta_{11}(\zeta)-1|\lesssim t^{-\frac{1}{4}}|\zeta|^\frac{1}{2},
	\end{align*}
	Since we have proven the first formula in (\ref{e65}) with $j=1$ and $\zeta\in\beta_1^{-1}(\Sigma_{11}-S_1)$, we generalize similarly this proof to the case for $\delta_{j1}(\zeta)$ on $\zeta\in\beta_j^{-1}(\Sigma_j-S_j)$, $j=1,2$. 
	The proof for the second formula is similarly obtained. 
	Thus, we confirm the result in this lemma.
\end{proof}

Define operator $\Delta_{j0}: f\mapsto f\delta_{j0}^{\sigma_3}$, where $f$ is a $2\times2$ matrix-valued function; by the definition of $\delta_{j0}$, it and its inverse operator $\Delta_{j0}^{-1}$ both belong to $\bb(L^2(\tilde\Sigma_j))$. 
Define a Cauchy-like integral operator $\tilde{A}_j$: $L^2(\tilde\Sigma_j)\to L^2(\tilde\Sigma_j)$, $f\mapsto \tilde A_jf$,
\begin{align}
	&\tilde A_jf=C_+^{\tilde\Sigma_j}(f\delta_{j0}^{-\sigma_3} N_j\check{w}^j_-\delta_{j0}^{\sigma_3})+C_-^{\tilde\Sigma_j}(f\delta_{j0}^{-\sigma_3} N_j\check{w}^j_+\delta_{j0}^{\sigma_3}),\label{e54}\\
	&\quad C_\pm^{\tilde\Sigma_j}f(\lambda)=\lim_{\lambda'\to\lambda,\lambda' \text{ on $\pm$ sides of } \tilde\Sigma_j}\int_{\tilde\Sigma_j}\frac{f(s)\ddddd s}{s-\lambda'}.\notag
\end{align}
By the relations (\ref{e51}) and (\ref{e54}), we have these formulas
\begin{align}\label{e67s}
	\tilde A_j=\Delta_{j0}N_jA_jN_j^{-1}\Delta_{j0}^{-1},\quad j=1,2.
\end{align}
Write $\tilde w^j=\tilde w_+^j+\tilde w_-^j$ on $\tilde\Sigma_j$,
\begin{align}\label{e66}
	\tilde w_\pm^j\equiv\tilde w_\pm^j(\zeta,n,t)=\delta_{j0}^{-\sigma_3} N_j\check{w}^j_\pm(\zeta,n,t)\delta_{j0}^{\sigma_3}. 
\end{align}
By the definition of $w_\pm^j$ and (\ref{e64}), we deduce that
\begin{align*}
	&\tilde w_+^1(\zeta,n,t)=\begin{cases}
		\left[\begin{matrix}
			0&-N_1R_3(\zeta)\zeta^{2i\nu_j}e^{-\frac{i}{2}\zeta^2}\delta_{11}(\zeta,n,t)\\0&0
		\end{matrix}\right]&\zeta\in e^{\frac{3\pi i}{4}}(0,|\beta_j^{-1}|\epsilon_0),\\
		\left[\begin{matrix}
			0&-N_1R_6(\zeta)\zeta^{2i\nu_j}e^{-\frac{i}{2}\zeta^2}\delta_{11}(\zeta,n,t)\\0&0
		\end{matrix}\right]&\zeta\in e^{\frac{3\pi i}{4}}(-|\beta_j^{-1}|\epsilon_0,0),\\
	\textbf{0}&\text{otherwise},
	\end{cases}\\
	&\tilde w_-^1(\zeta,n,t)=\begin{cases}
		\left[\begin{matrix}
			0&0\\N_1R_1(\zeta)\zeta^{-2i\nu_j}e^{\frac{i}{2}\zeta^2}\delta_{11}^{-1}(\zeta,n,t)&0
		\end{matrix}\right]&\zeta\in e^{\frac{\pi i}{4}}(0,|\beta_j^{-1}|\epsilon_0),\\
		\left[\begin{matrix}
			0&0\\N_1R_4(\zeta)\zeta^{-2i\nu_j}e^{\frac{i}{2}\zeta^2}\delta_{11}^{-1}(\zeta,n,t)&0
		\end{matrix}\right]&\zeta\in e^{\frac{\pi i}{4}}(-|\beta_j^{-1}|\epsilon_0,0),\\
	\textbf{0}&\text{otherwise},
	\end{cases}\\
&\tilde w_+^2(\zeta,n,t)=\begin{cases}
	\left[\begin{matrix}
		0&-N_2R_3(\zeta)\zeta^{-2i\nu_j}e^{\frac{i}{2}\zeta^2}\delta_{21}(\zeta,n,t)\\0&0
	\end{matrix}\right]&\zeta\in e^{\frac{\pi i}{4}}(0,|\beta_j^{-1}|\epsilon_0),\\
	\left[\begin{matrix}
		0&-N_2R_6(\zeta)\zeta^{-2i\nu_j}e^{\frac{i}{2}\zeta^2}\delta_{21}(\zeta,n,t)\\0&0
	\end{matrix}\right]&\zeta\in e^{\frac{\pi i}{4}}(-|\beta_j^{-1}|\epsilon_0,0),\\
\textbf{0}&\text{otherwise},
\end{cases}\\
&\tilde w_-^2(\zeta,n,t)=\begin{cases}
\left[\begin{matrix}
	0&0\\N_2R_1(\zeta)\zeta^{2i\nu_j}e^{-\frac{i}{2}\zeta^2}\delta_{21}^{-1}(\zeta,n,t)&0
\end{matrix}\right]&\zeta\in e^{\frac{3\pi i}{4}}(0,|\beta_j^{-1}|\epsilon_0),\\
\left[\begin{matrix}
	0&0\\N_2R_4(\zeta)\zeta^{2i\nu_j}e^{-\frac{i}{2}\zeta^2}\delta_{21}^{-1}(\zeta,n,t)&0
\end{matrix}\right]&\zeta\in e^{\frac{3\pi i}{4}}(-|\beta_j^{-1}|\epsilon_0,0),\\
\textbf{0}&\text{otherwise}.
\end{cases}
\end{align*}
Taking limits of $\tilde w_\pm^{j}$ and denoting them as $\tilde w_\pm^{j,\infty}\equiv\tilde w_\pm^{j,\infty}(\zeta)$ when $t\to+\infty$, by direct computation, we obtain that
\begin{subequations}\label{e67}
\begin{align}
	&\tilde w_+^{j,\infty}(\zeta)=\begin{cases}
		\textbf{0}&\zeta\in e^{\frac{(2+(-1)^j)\pi i}{4}}\mathbb{R},\\
		\left[\begin{matrix}
			0&-R_3(S_j)\zeta^{2(-1)^{j-1}i\nu_j}e^{(-1)^j\frac{i}{2}\zeta^2}\\0&0
		\end{matrix}\right]&\zeta\in e^{\frac{(2-(-1)^j)\pi i}{4}}\mathbb{R}^+,\\
		\left[\begin{matrix}
			0&-R_6(S_j)\zeta^{2(-1)^{j-1}i\nu_j}e^{(-1)^j\frac{i}{2}\zeta^2}\\0&0
		\end{matrix}\right]&\zeta\in e^{\frac{(2-(-1)^j)\pi i}{4}}\mathbb{R}^-,
	\end{cases}\\
	&\tilde w_-^{j,\infty}(\zeta)=\begin{cases}
		\textbf{0}&\zeta\in e^{\frac{(2-(-1)^j)\pi i}{4}}\mathbb{R},\\
		\left[\begin{matrix}
			0&0\\R_1(S_j)\zeta^{2(-1)^{j}i\nu_j}e^{(-1)^{j-1}\frac{i}{2}\zeta^2}&0
		\end{matrix}\right]&\zeta\in e^{\frac{(2+(-1)^j)\pi i}{4}}\mathbb{R^+},\\
		\left[\begin{matrix}
			0&0\\R_4(S_j)\zeta^{2(-1)^{j}i\nu_j}e^{(-1)^{j-1}\frac{i}{2}\zeta^2}&0
		\end{matrix}\right]&\zeta\in e^{\frac{(2+(-1)^j)\pi i}{4}}\mathbb{R^-}.
	\end{cases}
\end{align}
\end{subequations}
\begin{figure}
\centering\includegraphics{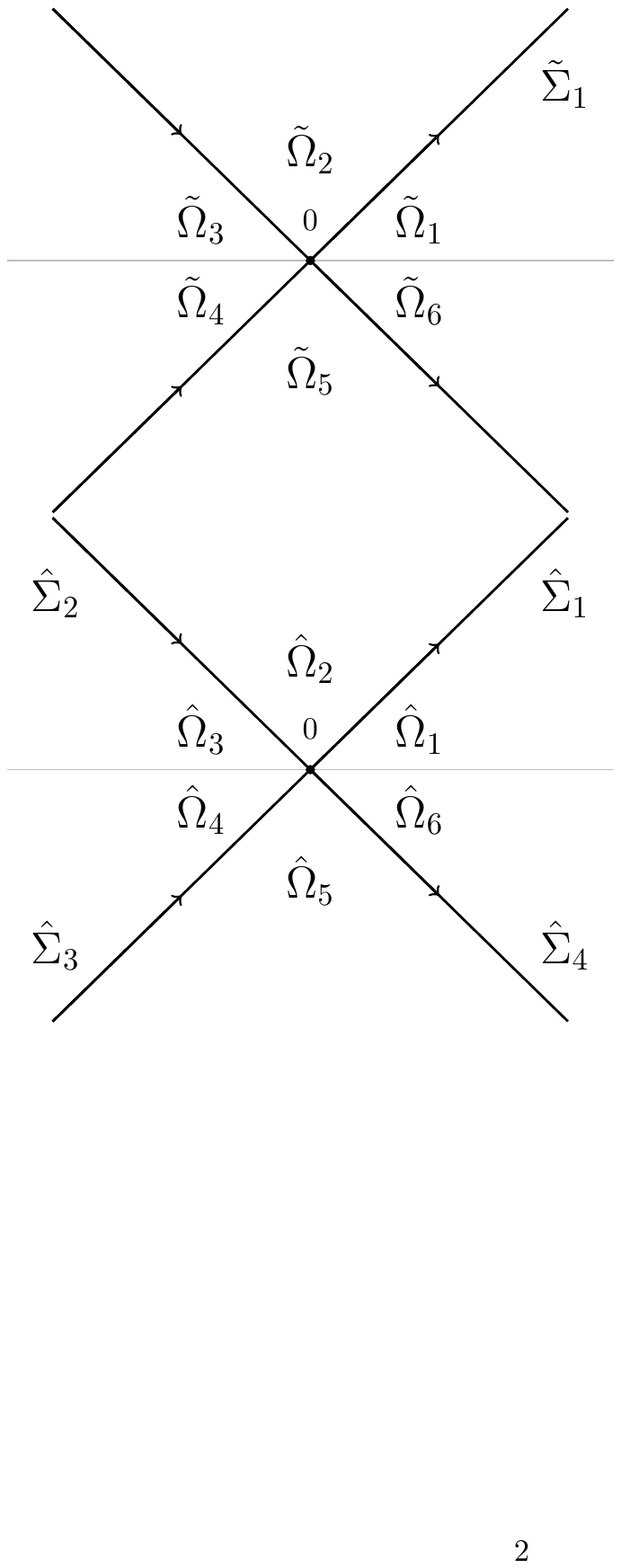}
\caption{The jump contour $\tilde\Sigma_1$ and regions: $\tilde\Omega_1,\dots,\tilde\Omega_6$. }
\end{figure}
\begin{proposition}\label{p3.11}
	If $r\in H^1_\theta(\Sigma)$, we obtain that as $t\to+\infty$, for $j=1,2$,
	\begin{align*}
		\parallel\tilde w^j-\tilde w^{j,\infty}\parallel_{L^\infty(\tilde\Sigma_j)\cap L^2(\tilde\Sigma_j)\cap L^1(\tilde\Sigma_j)}\lesssim t^{-\frac{1}{4}}.
	\end{align*}
	
\end{proposition}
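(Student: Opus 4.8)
The plan is to compare $\tilde w^j$ and $\tilde w^{j,\infty}$ ray by ray on the infinite cross $\tilde\Sigma_j$, splitting it into the \emph{bounded portion} $|\zeta|\le|\beta_j^{-1}|\epsilon_0$, where both matrices are supported, and the \emph{tail} $|\zeta|>|\beta_j^{-1}|\epsilon_0$, where $\tilde w^j$ vanishes identically while $\tilde w^{j,\infty}$ persists. Since $|\beta_j|\sim t^{-\frac12}$, the tail begins at $|\zeta|\sim t^{\frac12}\epsilon_0$. The two structural facts I would exploit are: (i) on the rays of $\tilde\Sigma_j$ the Gaussian factor satisfies $|e^{\pm\frac{i}{2}\zeta^2}|=e^{-|\zeta|^2/2}$, and (ii) the pure-imaginary powers $\zeta^{\pm2i\nu_j}$ have modulus $e^{\mp2\nu_j\arg\zeta}$, which is constant, hence bounded, along each fixed ray. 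Together these produce a single fixed Gaussian envelope dominating both $\tilde w^j$ and $\tilde w^{j,\infty}$, and this envelope is what renders the three norms finite and, on the tail, exponentially small in $t$.

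On the bounded portion I would compare entrywise using the explicit expressions for $\tilde w_\pm^j$ and the limits (\ref{e67}). Each nonzero entry has the form $c(\zeta)\,\zeta^{\pm2i\nu_j}e^{\pm\frac{i}{2}\zeta^2}$, with coefficient $c(\zeta)=N_jR_k(\zeta)\,\delta_{j1}^{\pm1}(\zeta)$ for $\tilde w^j$ and the constant $R_k(S_j)$ for $\tilde w^{j,\infty}$, so the difference is controlled by
\begin{align*}
|N_jR_k(\zeta)\delta_{j1}^{\pm1}(\zeta)-R_k(S_j)|\le|N_jR_k(\zeta)-R_k(S_j)|\,|\delta_{j1}^{\pm1}(\zeta)|+|R_k(S_j)|\,|\delta_{j1}^{\pm1}(\zeta)-1|.
\end{align*}
The second summand is directly $\lesssim t^{-\frac14}|\zeta|^{\frac12}$ by Lemma \ref{l3.11}, which also gives $|\delta_{j1}^{\pm1}(\zeta)|\lesssim1$ on the bounded portion. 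For the first summand I would use that $r\in H^1_\theta(\Sigma)$ is $\tfrac12$-H\"older continuous by Sobolev embedding, so each $R_k$, being a smooth function of $r$ and of $1-|r|^2$ (bounded below by (\ref{e12s})), is $\tfrac12$-H\"older in the argument; since $S_j$ is bounded away from the origin and $|\beta_j\zeta|\le\epsilon_0$ is small, $|\arg(\beta_j\zeta+S_j)-\arg S_j|\lesssim|\beta_j\zeta|$, whence $|N_jR_k(\zeta)-R_k(S_j)|\lesssim|\beta_j\zeta|^{\frac12}\sim t^{-\frac14}|\zeta|^{\frac12}$. Combining with (i)--(ii) yields, on the bounded portion,
\begin{align*}
|\tilde w^j-\tilde w^{j,\infty}|(\zeta)\lesssim t^{-\frac14}|\zeta|^{\frac12}e^{-|\zeta|^2/2}.
\end{align*}

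Integrating this dominating function over the cross settles all three norms simultaneously: $\sup_\zeta|\zeta|^{\frac12}e^{-|\zeta|^2/2}\lesssim1$, while $\int_0^\infty s^{\frac12}e^{-s^2/2}\,\ddddd s$ and $\int_0^\infty s\,e^{-s^2}\,\ddddd s$ are finite constants, so the bounded-portion contribution is $\lesssim t^{-\frac14}$ in $L^\infty(\tilde\Sigma_j)\cap L^2(\tilde\Sigma_j)\cap L^1(\tilde\Sigma_j)$. For the tail I only need $|\tilde w^{j,\infty}(\zeta)|\lesssim e^{-|\zeta|^2/2}$, and restricting to $|\zeta|\gtrsim t^{\frac12}\epsilon_0$ gives contributions $\lesssim e^{-ct}$ in each norm, which is $o(t^{-\frac14})$; adding the two contributions gives the claim. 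The main obstacle I anticipate is the careful bookkeeping of the H\"older estimate for $N_jR_k(\zeta)-R_k(S_j)$ — making the passage from the $\tfrac12$-H\"older regularity of $r$ on $\Sigma$ to the bound $\lesssim t^{-\frac14}|\zeta|^{\frac12}$ rigorous, controlling the argument change and confirming that the implicit constants are uniform in $n$ and $t$, together with checking that one Gaussian envelope dominates uniformly across all four rays and across both $\tilde w_+^j$ and $\tilde w_-^j$; once that envelope is in hand the three norms follow mechanically.
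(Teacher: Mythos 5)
Your proposal is correct and follows essentially the same route as the paper: both reduce the three norms to the pointwise dominating bound $|\tilde w^j-\tilde w^{j,\infty}|(\zeta)\lesssim t^{-\frac14}|\zeta|^{\frac12}e^{-|\zeta|^2/2}$, obtained on the bounded portion from Lemma \ref{l3.11} together with the $\tfrac12$-H\"older control of $N_jR_k(\zeta)-R_k(S_j)$ coming from $r\in H^1_\theta(\Sigma)$ and $|\arg(\beta_j\zeta+S_j)-\arg S_j|\lesssim|\beta_j\zeta|$. The only cosmetic difference is the tail: you estimate it separately as $\oo(e^{-ct})$, whereas the paper absorbs it into the same envelope via $e^{-x^2/2}\le|\beta_j\zeta/\epsilon_0|^{\frac12}e^{-x^2/2}\lesssim t^{-\frac14}x^{\frac12}e^{-x^2/2}$; both are valid.
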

\begin{proof}
	In this proof, we claim that for $\zeta\in\tilde\Sigma_j$, $\tilde w^j$ and $\tilde w^{j,\infty}$ satisfy
	\begin{align}\label{e72}
		&|\tilde w^j-\tilde w^{j,\infty}|(\zeta,n,t)\lesssim 
			t^{-\frac{1}{4}}|\zeta|^\frac{1}{4}e^{-\frac{|\zeta|^2}{2}},
	\end{align}
	and then, this proposition's results naturally follow.
	In the following, we will look into the case of $\zeta\in e^{\frac{\pi i}{4}}\mathbb{R}^+$ and $j=1$; then, proofs for other cases are parallel to omit. 
	For $\zeta=xe^{\frac{\pi i}{4}}\in\beta_j^{-1}(\Sigma_{11}-S_1)\subset e^\frac{\pi i}{4}\mathbb{R}$, we obtain the estimate
	\begin{align}\label{e70}
		&|N_1R_1(\zeta)-R_1(S_1)|=|r(\theta)-r(\theta_1)|=\Big|\int_{\theta_1}^{\theta}r'(\check\theta)\ddddd\check\theta\Big|\le|\theta-\theta_1|^{\frac{1}{2}}\parallel r\parallel_{H^1},\\
		&\quad \quad \theta=\arg(\beta_1\zeta+S_1),\quad \theta_1=\arg S_1\in[0,2\pi];\notag
	\end{align}
seeing the relationship between $\zeta$ and $\theta$ in Figure \ref{f4}, since $\zeta\in\beta_j^{-1}(\Sigma_{11}-S_1)$, we get the following estimate by triangular knowledge
\begin{align}\label{e71}
	|\theta_1-\theta|\lesssim|\beta_1\zeta|= t^{-\frac{1}{2}}(1-\xi^2)^{-\frac{1}{4}}x;
\end{align}
therefore, by (\ref{e70}) and (\ref{e71}), recalling $|\xi|\le V_0$, we obtain that
\begin{align}\label{e73}
	|N_1R_1(\zeta)-R_1(S_1)|\lesssim t^{-\frac{1}{4}}x^{\frac{1}{2}}.
\end{align}
\begin{figure}
	\centering\includegraphics{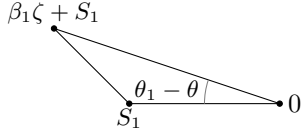}
	\caption{the relation between parameters $\theta$ and $\zeta$ for $\beta_1\zeta+S_1\in\Sigma_{11}$. }\label{f4}
\end{figure}
By definition of $\tilde w^j$, (\ref{e67}), Lemma \ref{l3.11} and (\ref{e73}), we estimate that
	\begin{align*}
		&|\tilde w^j-\tilde w^{j,\infty}|(\zeta,n,t)\le\Big|\zeta^{-2i\nu_1}e^{\frac{i\zeta^2}{2}}\Big|\big|N_1R_1(\zeta)\delta_{11}^{-1}(\zeta)-R_1(S_1)\big|\\
		&\quad\lesssim e^{-\frac{x^2}{2}}(|R_1(\beta_1\zeta+S_1)||\delta_{11}^{-1}(\zeta)-1|+|R_1(\beta_1\zeta+S_1)-R_1(S_1)|)\\
		&\quad\lesssim e^{-\frac{x^2}{2}}(\parallel r\parallel_\infty t^{-\frac{1}{4}}x^{\frac{1}{2}}+t^{-\frac{1}{4}}x^{\frac{1}{2}})\lesssim t^{-\frac{1}{4}}x^{\frac{1}{2}}e^{-\frac{x^2}{2}}.
	\end{align*}
For $\zeta=xe^{\frac{i\pi}{4}}\in e^\frac{\pi i}{4}\mathbb{R}\setminus\beta_j^{-1}(\Sigma_{11}-S_1)$, we obtain by (\ref{e67}) that
\begin{align*}
	&|\tilde w^j-\tilde w^{j,\infty}|(\zeta,n,t)=|\tilde w^{j,\infty}|(\zeta,n,t)=|R_1(S_j)||(xe^{\frac{\pi i}{4}})^{i\nu_1}|e^{-\frac{x^2}{2}}\\&\quad\lesssim e^{-\frac{x^2}{2}}\le\Big|\frac{\beta_1\zeta}{\epsilon_0}\Big|^\frac{1}{2}e^{-\frac{x^2}{2}}\lesssim t^{-\frac{1}{4}}x^\frac{1}{2}e^{-\frac{x^2}{2}}.
\end{align*}
We complete the proof.
\end{proof}
Here, we come to estimates of integral parts in (\ref{e60}). 
By direct computation, (\ref{e67s}), (\ref{e66}) and (\ref{e67}), we obtain that
\begin{align}\label{e75}
	&\frac{1}{2\pi i}\int_{\Sigma_j'}s^{-1}[(1-A_j)^{-1}I\check{w}^j](s,n,t)\ddddd s\notag\\
	&\quad=\frac{1}{2\pi i}\int_{\Sigma_j'}s^{-1}[N_j^{-1}\Delta_{j0}^{-1}(1-\tilde A)^{-1}\Delta_{j0}N_jI\delta_{j0}^{\sigma_3}N_j^{-1}\tilde w^j\delta_{j0}^{-\sigma_3}](s,n,t)\ddddd s\notag\\
	&\quad =\frac{\beta_j}{2\pi i}\int_{\tilde\Sigma_j}(\beta_js+S_j)^{-1}[(1-\tilde A_j)^{-1}\delta_{j0}^{\sigma_3}\tilde w^j](s,n,t)\delta_{j0}^{-\sigma_3}\ddddd s\notag\\
	&\quad=\frac{\beta_j}{2\pi i}\delta_{j0}^{\sigma_3}\int_{\tilde\Sigma_j}(\beta_js+S_j)^{-1}[(1-\tilde A_j)^{-1}I\tilde w^j](s,n,t)\ddddd s\delta_{j0}^{-\sigma_3}\notag\\
	&\quad=\frac{\beta_j}{2\pi i}S_j^{-1}\delta_{j0}^{\sigma_3}\int_{\tilde\Sigma_j}[(1-\tilde A_j^\infty)^{-1}I\tilde w^{j,\infty}](s)\ddddd s\delta_{j0}^{-\sigma_3}+\frac{\beta_j}{2\pi i}\delta_{j0}^{\sigma_3}(I_6+I_7)\delta_{j0}^{-\sigma_3},
\end{align}
where 
\begin{subequations}
	\begin{align}
		&I_6=\int_{\tilde\Sigma_j}(\beta_js+S_j)^{-1}[((1-\tilde A_j)^{-1}-(1-\tilde A_j^\infty)^{-1})I\tilde w^j](s,n,t)\ddddd s,\label{e74a}\\
		&I_7=\int_{\tilde\Sigma_j}[(1-\tilde A_j^\infty)^{-1}I((\beta_js+S_j)^{-1}\tilde w^j-S_j^{-1}\tilde w^{j,\infty})](s,n,t)\ddddd s.
	\end{align}
\end{subequations}
By Schwartz inequality, 
\begin{subequations}
\begin{align}
	&|I_6|=\Big|\int_{\tilde\Sigma_j}[(\beta_js+S_j)^{-1}((1-\tilde A_j)^{-1}(\tilde A_j-\tilde A_j^\infty)(1-\tilde A_j^\infty)^{-1})I\tilde w^j](s,n,t)\ddddd s\Big|\notag\\
	&\quad\le\parallel (1-\tilde A_j)^{-1}(\tilde A_j-\tilde A_j^\infty)(1-\tilde A_j^\infty)^{-1}I\parallel_{L^2(\tilde\Sigma_j)}\parallel (\beta_j\cdot+S_j)^{-1}\tilde w^j\parallel_{L^2(\tilde\Sigma_j)},\label{e75a}\\
	&|I_7|=\int_{\tilde\Sigma_j}[(1-\tilde A_j^\infty)^{-1}I((\beta_js+S_j)^{-1}\tilde w^j-S_j^{-1}\tilde w^{j,\infty})](s,n,t)\ddddd s\notag\\
	&\quad=\int_{\tilde\Sigma_j}[(1-\tilde A_j^\infty)^{-1}\tilde A_jI((\beta_js+S_j)^{-1}\tilde w^j-S_j^{-1}\tilde w^{j,\infty})](s,n,t)\ddddd s\notag\\
	&\quad\quad +\int_{\tilde\Sigma_j}((\beta_j\cdot+S_j)^{-1}
	\tilde w^j-S_j^{-1}\tilde w^{j,\infty})(s,n,t)\ddddd s\notag\\
	&\quad \le\parallel(1-\tilde A_j^\infty)^{-1}\parallel_{L^2\to L^2}\parallel\tilde A_jI\parallel_{L^2(
		\tilde\Sigma_j)}\parallel(\beta_j\cdot+S_j)^{-1}\tilde w^j-S_j^{-1}\tilde w^{j,\infty}\parallel_{L^2(\tilde\Sigma_j)}\notag\\
	&\quad\quad +\parallel(\beta_j\cdot+S_j)^{-1}\tilde w^j-S_j^{-1}\tilde w^{j,\infty}\parallel_{L^1(\tilde\Sigma_j)}.\label{e75b}
\end{align}
\end{subequations}
For $I_6$, estimating by Lemma \ref{l3.11} and $L^2$ boundedness of $\tilde w^{j,\infty}$ on $L^2(\tilde\Sigma_j)$, we get that
\begin{align}\label{e78}
	&\parallel (1-\tilde A_j)^{-1}(\tilde A_j-\tilde A_j^\infty)(1-\tilde A_j^\infty)^{-1}I\parallel_{L^2(\tilde\Sigma_j)}\notag\\
	&\quad=\parallel (1-\tilde A_j)^{-1}(\tilde A_j-\tilde A_j^\infty)I+(1-\tilde A_j)^{-1}(\tilde A_j-\tilde A_j^\infty)(1-\tilde A_j^\infty)^{-1}\tilde A_j^\infty I\parallel_{L^2(\tilde\Sigma_j)}\notag\\
	&\quad\le\parallel(\tilde A_j-\tilde A_j^\infty)I\parallel_{L^2(\tilde\Sigma_j)}+\parallel \tilde A_j-\tilde A_j^\infty\parallel_{L^2\to L^2}\parallel \tilde A_j^\infty I\parallel_{L^2(\tilde\Sigma_j)}\notag\\
	&\quad\lesssim\parallel \tilde w^j-\tilde w^{j,\infty}\parallel_{L^2(\tilde\Sigma_j)}+\parallel \tilde w^j-\tilde w^{j,\infty}\parallel_{L^\infty(\tilde\Sigma_j)}\parallel\tilde w^{j,\infty}\parallel_{L^2(\tilde\Sigma_j)}\notag\\
	&\quad \lesssim t^{-\frac{1}{4}}(1+\parallel\tilde w^{j,\infty}\parallel_{L^2(\tilde\Sigma_j)})\lesssim t^{-\frac{1}{4}}; 
\end{align}
estimate the $L^2$-norm by direct computation, the boundedness of $(\beta_j\zeta+S_j)^{-1}$ on $\zeta\in\tilde\Sigma_j$, (\ref{e61}), (\ref{e66}), Lemma \ref{l3.9} and definition of $\beta_j$
\begin{align}\label{e79}
	&\parallel (\beta_j\cdot+S_j)^{-1}\tilde w^j\parallel_{L^2(\tilde\Sigma_j)}\lesssim \parallel \tilde w^j\parallel_{L^2(\tilde\Sigma_j)}=\parallel \delta_{j0}^{-\sigma_3} N_j\check{w}^j_-\delta_{j0}^{\sigma_3}\parallel_{L^2(\tilde\Sigma_j)}\notag\\
	&\quad\lesssim\parallel N_j\check w^j\parallel_{L^2(\tilde\Sigma_j)}=\parallel w^j\parallel_{L^2(\Sigma_j)}|\beta_j|^{-\frac{1}{2}}\lesssim1;
\end{align}
therefore, we obtain by (\ref{e75a}), (\ref{e78}) and (\ref{e79}) that
\begin{align}\label{e77}
	|I_6|\lesssim t^{-\frac{1}{4}}. 
\end{align}
For $I_7$, similar to Lemma \ref{l3.11}, we obtain that for $j=1,2$,
\begin{align*}
	\parallel(\beta_j\cdot+S_j)^{-1}\tilde w^j-S_j^{-1}\tilde w^{j,\infty}\parallel_{L^1(\tilde\Sigma_j)\cap L^2(\tilde\Sigma_j)}\lesssim t^{-\frac{1}{4}};
\end{align*}
which deduces that
\begin{align}\label{e78s}
	|I_7|\lesssim (\parallel(1-\tilde A_j^\infty)^{-1}\parallel_{L^2\to L^2}\parallel\tilde A_jI\parallel_{L^2(\tilde\Sigma_j)}+1)t^{-\frac{1}{4}}\lesssim t^{-\frac{1}{4}},
\end{align}
by (\ref{e75b}), the boundedness of $(1-\tilde A_j^\infty)^{-1}$ shown in Section \ref{s3.9} and 
\begin{align*}
	\parallel\tilde A_jI\parallel_{L^2(\tilde\Sigma_j)}\lesssim\parallel \tilde w^j\parallel_2\lesssim |\beta_j|^{-\frac{1}{2}}\parallel w^j\parallel_{L^2(\Sigma_j)}\lesssim1.
\end{align*}
Considering (\ref{e60}), (\ref{e75}), (\ref{e77}) and (\ref{e78s}), we obtain the asymptotic formula
\begin{align}\label{e79s}
	M^{(2,R)}(0,n,t)=&I+\sum_{j=1}^{2}\frac{\beta_j}{2\pi i}\delta_{j0}^{\sigma_3}\int_{\tilde\Sigma_j}S_j^{-1}[(1-\tilde A_j^\infty)^{-1}I\tilde w^{j,\infty}](s)\ddddd s\delta_{j0}^{-\sigma_3}+\oo(t^{-\frac{3}{4}}).
\end{align}

\begin{rhp}\label{r3.13}
	Find a $2\times2$ matrix-valued function $M^{L,j}\equiv M^{L,j}(\zeta,n,t)$ such that
	\begin{itemize}
		\item $M^{L,j}(\zeta,n,t)$ is analytic on $\mathbb{C}\setminus\tilde\Sigma_j$.
		\item As $\zeta\to\infty$, $M^{L,j}(\zeta,n,t)\sim I$.
		\item On $\zeta\in\tilde\Sigma_j$, $M^{L,j}_+(\zeta)=M^{L,j}_-(\zeta)V^{L,j}(\zeta)$, where $V^{L,j}=(I-\tilde w^{j,\infty}_-)^{-1}(I+\tilde w^{j,\infty}_+)$.
	\end{itemize}
\end{rhp}
We see that RH problem \ref{r3.13} is a model RH problem that related to parabolic cylinder functions, seeing \cite{deift1993long}. 
And we see that the solution of RH problem \ref{r3.13} is written in the Beals-Coifman solution:
\begin{align}
	M^{L,j}(\zeta)=I+\frac{1}{2\pi i}\int_{\tilde\Sigma_j}\frac{[(1-\tilde A_j^\infty)^{-1}I\tilde w^{j,\infty}](s)}{s-\zeta}\ddddd s
\end{align}
If write $M^{L,j}(\zeta,n,t)$ by its asymptotic expansion at $\zeta\to \infty$,
\begin{align}
	M^{L,j}(\zeta)=I+\zeta^{-1} M^{L,j}_1+\dots,
\end{align}
then, we derive that
\begin{align}
	M^{L,j}_1=\frac{1}{2\pi i}\int_{\tilde\Sigma_j}[(1-\tilde A_j^\infty)^{-1}I\tilde w^{j,\infty}](s)\ddddd s;
\end{align}
recalling (\ref{e79s}), we obtain that
\begin{align}\label{e83}
	M^{(2,R)}(0,n,t)=&I+\sum_{j=1}^{2}\beta_jS_j^{-1}\delta_{j0}^{\sigma_3}	M^{L,j}_1\delta_{j0}^{-\sigma_3}+\oo(t^{-\frac{3}{4}}),
\end{align}
which means that the solution of RH problem \ref{r3.13} describes the leading terms of solutions of $M^{(2,R)}(0,n,t)$. 
Seeing \cite{deift1993long,deift1994long} and solving RH problem \ref{r3.13}, we learn that $M^{L,j}_1$ is explicitly obtain that
\begin{align}\label{e80}
	[M_1^{L,j}]_{1,2}=-i\frac{(2\pi)^\frac{1}{2}e^\frac{\pi i}{4}e^\frac{-\pi\nu_j}{2}}{r(S_j)\Gamma(-i\nu_j)}. 
\end{align}
where $\Gamma(\lambda)$ is Euler's Gamma functions.

\subsection{Analysis on $\bar\partial$-problem}\label{s3.7}
\indent

Since we have obtain the long-time asymptotic formula (\ref{e83}), in this part, we study the solution of $\bar\partial$-problem \ref{d3.6} as $t\to+\infty$. 

The solution of $\bar\partial$-problem \ref{d3.6} can be written as
\begin{align}\label{e84}
	M^{(2,D)}(\lambda,n,t)=I-\frac{1}{\pi}\iint_{\mathbb{C}}\frac{[M^{(2,D)}\tilde\pp](s,n,t)}{s-\lambda}\ddddd s.
\end{align}
(\ref{e84}) can also be written by operator $S$: $f\mapsto Sf$
\begin{align}\label{e85}
	(1-S)M^{(2,D)}=I,\quad Sf(\lambda)=-\frac{1}{\pi}\iint_\mathbb{C}\frac{f(s)\tilde\pp (s)}{s-\lambda}\ddddd L(s),
\end{align}
where $L(s)$ is the Lebesgue distribution on complex plane $\mathbb{C}$. 
We prove in the following that $S\in\bb(L^\infty(\mathbb{C}))$ and as $t$ is sufficiently large, $\parallel S\parallel_{L^\infty\to L^\infty}$ decays.
\begin{proposition}\label{p3.14}
	If $r\in H^1_\theta(\Sigma)$, the operator $S\in\bb(L^\infty(\mathbb{C}))$, and as $t\to+\infty$, the norm of $S$ satisfies
	\begin{align*}
		\parallel S\parallel_{L^\infty\to L^\infty}\lesssim t^{-\frac{1}{4}}.
	\end{align*}
\end{proposition}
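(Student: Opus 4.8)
The plan is to bound $S$ through its scalar integral kernel and to exploit the Gaussian-type decay of $e^{\pm it\phi}$ off the unit circle together with the $H^1$-control of $r$. First I would note that for any $2\times2$ matrix function $f$,
\begin{align*}
	\parallel Sf\parallel_{L^\infty(\mathbb{C})}\le\parallel f\parallel_{L^\infty(\mathbb{C})}\cdot\frac{1}{\pi}\sup_{\lambda\in\mathbb{C}}\iint_{\mathbb{C}}\frac{|\tilde\pp(s)|}{|s-\lambda|}\ddddd L(s),
\end{align*}
so it suffices to estimate the displayed kernel integral. Since $M^{(2,R)}$ solves RH problem \ref{r3.4} with $\det M^{(2,R)}\equiv1$ and is uniformly bounded together with its inverse, and since $\delta,\delta^{-1}$ are bounded by the Remark after Proposition \ref{p3.1}, the conjugation in $\tilde\pp=M^{(2,R)}\bar\partial\pp(M^{(2,R)})^{-1}$ is harmless and $|\tilde\pp(s)|\lesssim|\bar\partial\pp(s)|$. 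On each $\Omega_j$ the entries of $\bar\partial\pp$ in $\bar\partial$-RH problem \ref{r3.3} are of the form $\bar\partial R_j\,\delta^{\pm2}e^{\pm it\phi}$; writing $\lambda=\rho e^{i\theta}$ and recalling that $R_j$ depends on $\theta$ only, one has $\bar\partial R_j=\tfrac{i}{2}\rho^{-1}e^{i\theta}r'(\theta)$, and since $\rho$ is bounded away from $0$ on $\mathrm{supp}\,\bar\partial\pp$, we reduce to $|\tilde\pp(s)|\lesssim|r'(\theta)|\,e^{-t|\im\phi(s)|}$.

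Next I would localize to one region, say $\Omega_1$ attached to $S_1$, the remaining regions being handled by the same argument and the symmetries of the problem. I introduce real coordinates $z=u+iv$ through $s=S_1(1+z)$, in which the unit circle is tangent to $\{u=0\}$, the tangential displacement satisfies $\theta\approx\theta_1+v$ with $\theta_1=\arg S_1$, and the cross edges sit at $v=\pm u$. Expanding as in (\ref{e17s}) and using (\ref{e31s}), $\im\phi$ has a definite sign on $\Omega_1$ with
\begin{align*}
	|\im\phi(s)|\gtrsim\sqrt{1-\xi^2}\,|u|\,|v|,\qquad s=S_1(1+u+iv)\in\Omega_1.
\end{align*}
Hence the kernel integral over $\Omega_1$ is controlled by
\begin{align*}
	\iint_{\Omega_1}\frac{|r'(\theta_1+v)|\,e^{-ct|u||v|}}{|s-\lambda|}\,\ddddd u\,\ddddd v,\qquad c=c(\xi)>0.
\end{align*}

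For the final estimate I would freeze $u$ and apply the Cauchy–Schwarz inequality in $v$, extracting the factor $\parallel r'\parallel_{L^2(\ddddd\theta)}\lesssim\parallel r\parallel_{H^1_\theta(\Sigma)}$; this is precisely where the $H^1$ (and no better) regularity of $r$ is used, forcing the $L^2$ rather than the $L^\infty$ route. The surviving $v$-integral $\int e^{-2ct|u||v|}|s-\lambda|^{-2}\ddddd v$ is bounded uniformly in $\lambda$ by the $L^2$-mapping of the Cauchy kernel along the line $u=\mathrm{const}$, producing an integrable $|u-u_\lambda|^{-1/2}$ singularity, and integrating in $u$ then yields the claimed rate. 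The exponent $t^{-1/4}$ can be read off by rescaling $u=Ut^{-1/2}$, $v=Vt^{-1/2}$: the area element contributes $t^{-1}$, the concentration of the $L^2$-mass of $r'$ at scale $t^{-1/2}$ contributes $t^{1/4}$, and the Cauchy kernel supplies the remaining $t^{1/2}$ enhancement near $\lambda$ while its decay away from $\lambda$ secures convergence, netting $t^{-1/4}$ uniformly in $\lambda$.

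I expect the main obstacle to be exactly this low-regularity step: because $r'$ lies only in $L^2$, the derivative $\bar\partial R_j$ cannot be taken in $L^\infty$, so one must balance the Cauchy–Schwarz extraction of $\parallel r'\parallel_{L^2}$ against the uniform-in-$\lambda$ control of the singular Cauchy kernel, all while the exponent has the product form $e^{-ct|u||v|}$ rather than a pure Gaussian $e^{-ctv^2}$. Guaranteeing that the $|u-u_\lambda|^{-1/2}$ factor remains integrable for every $\lambda\in\mathbb{C}$ and that the interchange of the two integrations reproduces exactly $t^{-1/4}$ is the delicate point; the $L^p$-boundedness of the Cauchy transform on lines for $1<p<2$ is the tool I would use to close it.
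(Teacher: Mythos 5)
Your proposal is correct and follows essentially the same route as the paper's own proof: both reduce $\parallel S\parallel_{L^\infty\to L^\infty}$ to a bound, uniform in $\lambda$, on $\iint_{\mathbb{C}}|\tilde\pp(s)|\,|s-\lambda|^{-1}\,\mathrm{d}L(s)$, use the boundedness of $M^{(2,R)}$, $(M^{(2,R)})^{-1}$ and $\delta^{\pm 1}$ to reduce $|\tilde\pp|$ to $|r'(\theta)|e^{-t\im\phi}$, apply Cauchy--Schwarz in the tangential (angular) variable to extract $\parallel r'\parallel_{L^2}\lesssim\parallel r\parallel_{H^1_\theta(\Sigma)}$ together with $\left(\int|\rho e^{i\theta}-\lambda|^{-2}\,\mathrm{d}\theta\right)^{1/2}\lesssim|\rho-|\lambda||^{-1/2}$, and conclude with the same model integral $\int_1^{\rho_0}e^{-ct(\rho-1)^2}|\rho-|\lambda||^{-1/2}\,\mathrm{d}\rho\lesssim t^{-1/4}$. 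The differences are cosmetic---your local Cartesian coordinates and product bound $|\im\phi|\gtrsim\sqrt{1-\xi^2}\,|u||v|$ versus the paper's polar coordinates and the monotonicity $\sin\theta\ge\sin\theta_\rho$ reducing the phase to its value on the cross edge---and since $|v|\ge|u|$ on the relevant sector your bound yields the Gaussian $e^{-ctu^2}$, which must be retained in the final $u$-integration (the $|u-u_\lambda|^{-1/2}$ singularity alone gives only $O(1)$), exactly as your rescaling paragraph in effect does.
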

\begin{proof}
	As the definition of $\tilde\pp$, it is supported on $\cup_{k=1,3,4,6}\Omega_k$;
	therefore, the integral in (\ref{e85}) is only supported on $\cup_{k=1,3,4,6}\Omega_6$.
	To make the proof concise, we detail the case for functions supported on $\Omega_1^-\deff\Omega_1\cap\{\re\lambda<0\}$;
	and the other cases' proofs are parallel. 
	For $2\times2$ matrix-valued functions $f\equiv f(\lambda)$ supported on $\Omega_1^-$, by (\ref{e24}), (\ref{e25s}) and classical computation, we obtain a series of estimates:
	\begin{align}
		&|Sf(\lambda,n,t)|=\Big|\frac{1}{\pi}\iint_{\Omega_1^-}\frac{f(s)[M^{(2,R)}\bar\partial\pp (M^{(2,R)})^{-1}](s,n,t)}{s-\lambda}\ddddd L(s)\Big|\notag\\
		&\quad\le\parallel f\parallel_{L^\infty(\Omega_1^-)}\parallel M^{(2,R)}\parallel_{L^\infty(\Omega_1^-)}\parallel (M^{(2,R)})^{-1}\parallel_{L^\infty(\Omega_1^-)}\parallel \delta^{-2}\parallel_{L^\infty(\Omega_1^-)}\notag\\
		&\quad\quad\iint_{\Omega_1\cap\{\re\lambda>0\}}\frac{|\bar\partial R_1(s)|e^{-t\im\phi(s,n,t)}\ddddd L(s)}{|s-\lambda|}\notag\\
		&\quad\overset{\circled1}{\lesssim}\parallel f\parallel_{L^\infty(\Omega_1^-)}\iint_{\Omega_1\cap\{\re\lambda>0\}}\frac{|\bar\partial R_1(s)|e^{-t\im\phi(s,n,t)}\ddddd L(s)}{|s-\lambda|}\notag\\
		&\quad\overset{\circled2}{=}\parallel f\parallel_{L^\infty(\Omega_1^-)}\int_{1}^{\rho_0}\int_{\frac{\pi}{2}}^{\theta_\rho}\frac{| r'(\theta)|e^{-t((\rho-\rho^{-1})\sin\theta+2\xi\log\rho)}\rho\ddddd\theta\ddddd\rho}{|\rho e^{i\theta}-\lambda|}\notag\\
		&\quad\overset{\circled3}{\le}\parallel f\parallel_{L^\infty(\Omega_1^-)}\int_{1}^{\rho_0}\int_{\frac{\pi}{2}}^{\theta_\rho}\frac{|r'(\theta)|e^{-t((\rho-\rho^{-1})\sin\theta_\rho+2\xi\log\rho)}\rho_0\ddddd\theta\ddddd\rho}{|\rho e^{i\theta}-\lambda|}\notag\\
		&\quad=\parallel f\parallel_{L^\infty(\Omega_1^-)}\int_{1}^{\rho_0}e^{-t\im\phi(\rho e^{i\theta_\rho},n,t)}\rho_0\ddddd\rho\int_{\frac{\pi}{2}}^{\theta_\rho}\frac{|r'(\theta)|}{|\rho e^{i\theta}-\lambda|}\ddddd\theta\notag\\
		&\quad\overset{\circled4}{\lesssim} \parallel f\parallel_{L^\infty(\Omega_1^-)}\int_{1}^{\rho_0}\frac{e^{-t\im\phi(\rho e^{i\theta_\rho},n,t)}}{|\rho-|\lambda||^\frac{1}{2}}\ddddd\rho
		\overset{\circled5}{\le}\parallel f\parallel_{L^\infty(\Omega_1^-)}\int_{1}^{\rho_0}\frac{e^{-\frac{t\sqrt{1-\xi^2}}{2}(\rho-1)^2}}{|\rho-|\lambda||^\frac{1}{2}}\ddddd\rho\notag\\
		&\quad\overset{\circled6}{\lesssim} t^{-\frac{1}{4}}\parallel f\parallel_{L^\infty(\Omega_1^-)}.\label{e88}
	\end{align}
	In (\ref{e88}),  recalling the boundedness of $\parallel M^{(2,R)}\parallel_{L^\infty(\Omega_1^-)}$,  $\parallel (M^{(2,R)})^{-1}\parallel_{L^\infty(\Omega_1^-)}$, $\parallel \delta^{-1}\parallel_{L^\infty(\Omega_1^-)}$, we obtain estimate $\circled1$.
	For equality $\circled2$, $\theta_\rho$ is the unique argument such that $\rho e^{i\theta_\rho}\in\Sigma_{11}$;
	$\rho_0$ denotes the modular of $s$ belonging to the arc contained in $L\cap D_+$; 
	moreover, for $s=\rho e^{i\theta}$, we obtain that
	\begin{align*}
		\ddddd L(s)=\rho\ddddd\theta\ddddd\rho,\quad \bar\partial=e^{i\theta}(\partial_\rho+i\rho^{-1}\partial_\theta)/2.
	\end{align*}
	For the triangular function, it follows that
	\begin{align*}
		\sin\theta\ge\sin\theta_\rho,\quad  \theta\in[\frac{\pi}{2},\theta_\rho],
	\end{align*}
and then, as a consequence, we obtain $\circled3$.
$\circled4$ is the consequence of Schwartz inequalities, the boundedness of $\parallel r\parallel_{H^1}$ and the elementary fact that
\begin{align*}
	&\int_{\frac{\pi}{2}}^{\theta_\rho}\frac{1}{|\rho e^{i\theta}-\lambda|^2}\ddddd \theta\le\int_{-\pi}^{\pi}\frac{1}{|\rho e^{i\theta}-\lambda|^2}\ddddd \theta=\int_{-\pi}^\pi\frac{\ddddd\theta}{(\rho-|\lambda|)^2+4\rho|\lambda|\sin^2\frac{\theta}{2}}\\
	&\quad\le\int_{-\pi}^\pi\frac{\ddddd\theta}{(\rho-|\lambda|)^2+\frac{4\rho|\lambda|}{\pi^2}\theta^2}\lesssim|\rho-|\lambda||^{-1}.
\end{align*} 
$\circled5$ is obtained by (\ref{e31s}) and the fact that $\rho e^{i\theta_\rho}\in\Sigma_{11}$. 
We derive $\circled6$ by the proof found in \cite{dieng2008long}. 
We complete the proof.
\end{proof}

Since $\parallel S\parallel_{L^\infty\to L^\infty}$ decays as $t\to+\infty$, for sufficiently large $t$, $(1-S)^{-1}$ belongs to $\bb(L^\infty(\mathbb C))$: therefore, by (\ref{e85}), we learn that the solution $M^{(2,D)}$ exists  and (\ref{e84}) is written as 
\begin{align*}
	M^{(2,D)}(\lambda,n,t)=I-\frac{1}{\pi}\iint_\mathbb{C}\frac{[(1-S)^{-1}I\pp](s,n,t)}{s-\lambda}\ddddd s. 
\end{align*}
Estimate $M^{(2,D)}(\lambda,n,t)$ at $\lambda=0$,
\begin{align}\label{e92}
	M^{(2,D)}(0,n,t)=I-\frac{1}{\pi}\iint_\mathbb{C}\frac{[(1-S)^{-1}I\pp](s,n,t)}{s}\ddddd s.
\end{align}
For sufficiently large $t$, using the technique applied on (\ref{e88}), we derive a series of estimates
\begin{align}\label{e91}
	&\Big|\frac{1}{\pi}\iint_{\Omega_1^-}\frac{[(1-S)^{-1}I\tilde\pp](s,n,t)}{s}\ddddd L(s)\Big|\notag\\
	&\quad\le\frac{\parallel (1-S)^{-1}I\parallel_{L^\infty(\Omega_1^-)}}{\pi}\iint_{\Omega_1^-}\frac{|\tilde\pp|(s,n,t)\ddddd L(s)}{|s|}
	\overset{\circled7}{\lesssim}\iint_{\Omega_1^-}\frac{|\bar\partial\pp|(s,n,t)}{|s|}\ddddd L(s)\notag\\
	&\quad=\int_{1}^{\rho_0}\int_{\frac{\pi}{2}}^{\theta_\rho}\frac{|r'(\theta)|e^{-t\im\phi(\rho e^{i\theta},n,t)}}{|\delta^{2}(\rho e^{i\theta})|}\ddddd\theta\ddddd\rho
	\lesssim\int_{1}^{\rho_0}\int_{\frac{\pi}{2}}^{\theta_\rho}|r'(\theta)|e^{-t\im\phi(\rho e^{i\theta},n,t)}\ddddd\theta\ddddd\rho\notag\\
	&\quad\overset{\circled8}{\le}\int_{1}^{\rho_0}e^{-t\im\phi(\rho e^{i\theta_\rho},n,t)}\int_{\frac{\pi}{2}}^{\theta_\rho}|r'(\theta)|e^{-t(\rho-\rho^{-1})(\theta-\theta_\rho)\frac{\cos\theta_\rho}{2}}\ddddd\theta\ddddd\rho\notag\\
	&\quad\lesssim t^{-\frac{1}{2}}\int_{1}^{\rho_0}\frac{e^{-\frac{t\sqrt{1-\xi^2}}{2}(\rho-1)^2}\ddddd\rho}{((1-\rho^{-2})\rho\cos\theta_\rho)^{\frac{1}{2}}}\overset{\circled9}{\lesssim}t^{-\frac{1}{2}}\int_{1}^{\rho_0}\frac{e^{-\frac{t\sqrt{1-\xi^2}}{2}(\rho-1)^2}\ddddd\rho}{((1-\rho^{-2})\frac{\sqrt{1-V_0^2}}{2})^{\frac{1}{2}}}\lesssim t^{-\frac{3}{4}}.
\end{align}
In addition to proof for (\ref{e91}), the sufficiently large $t$ and Proposition \ref{p3.14} guarantee the boundedness of $\parallel(1-S)^{-1}I\parallel_{L^\infty(\mathbb{C})}$, which combined with the boundedness of $\parallel M^{(2,R)}\parallel_{L^\infty(\Omega_1^-)}$,  $\parallel (M^{(2,R)})^{-1}\parallel_{L^\infty(\Omega_1^-)}$, $\parallel \delta^{-1}\parallel_{L^\infty(\Omega_1^-)}$ deduces $\circled7$;
by the elementary fact that for $\theta\in[\frac{\pi}{2},\theta_\rho]$
\begin{align*}
	\sin\theta\ge\sin\theta_\rho+\frac{\cos\theta_\rho}{2}(\theta-\theta_\rho),
\end{align*}
we obtain $\circled8$;
recalling $\rho e^{i\theta_\rho}\in\Sigma_{11}$ and (\ref{e30}), we obtain that
\begin{align}
	\rho\cos\theta_\rho=\re(\rho e^{i\theta_\rho})>\frac{\sqrt{1-V_0^2}}{2}>0,
\end{align}
which deduces $\circled9$.
We extend the result in (\ref{e91}) to that for the case when integral regions is $\mathbb{C}$
\begin{align}\label{e95}
	\Big|\frac{1}{\pi}\iint_{\mathbb{C}}\frac{[(1-S)^{-1}I\tilde\pp](s,n,t)}{s}\ddddd L(s)\Big|\lesssim t^{-\frac{3}{4}};
\end{align}
therefore, combining (\ref{e92}) and (\ref{e95}), we obtain that as $t\to+\infty$, 
\begin{align}\label{e94}
	M^{(2,D)}\sim I+\oo(t^{-\frac{3}{4}}).
\end{align}

\subsection{the long-time asymptotics}
\indent\

Considering (\ref{e15}), (\ref{e20}), (\ref{e23}), (\ref{e22}), (\ref{e83}), (\ref{e80}) and (\ref{e94}), we obtain that
\begin{align*}
	&q_{n}(t)=\delta^{-1}(0)\sum_{j=1}^{2}\beta_jS_j^{-1}\delta_{j0}^{2}	[M^{L,j}_1]_{1,2}+\oo(t^{-\frac{3}{4}}),\\
	&\quad[M_1^{L,j}]_{1,2}=-i\frac{(2\pi)^\frac{1}{2}e^\frac{\pi i}{4}e^\frac{-\pi\nu_1}{2}}{r(S_j)\Gamma(-a)},\quad j=1,2.
\end{align*}

\subsection{The invertibility of operators}\label{s3.9}
\indent

In this part, we come to accomplish this section by proving the boundedness of some operators. 
We check the boundedness of these operators one-by-one in this order: 
\begin{align*}
	&(1-\tilde A_j^\infty)^{-1}\rightharpoonup (1-\tilde A_j)^{-1}\rightharpoonup (1-A_j)^{-1}\\
	&\quad\rightharpoonup(1-C_{w^j})^{-1}\rightharpoonup(1-C_{w'})^{-1}\rightharpoonup(1-C_w)^{-1},\quad j=1,2.
\end{align*}

By taking $\tilde A_j^\infty$ in place of $C_{w^\infty}$ in Step 7 of \cite{deift1994long}, we obtain that $(1-\tilde A_j^\infty)^{-1}\in\bb(L^2(\tilde\Sigma_j))$.
By Proposition \ref{p3.11}, we learn that
\begin{align*}
	\parallel \tilde A_j-\tilde A_j^\infty\parallel_{L^2\to L^2}\lesssim\parallel \tilde w^j-\tilde w^{j,\infty}\parallel_{L^\infty(\tilde\Sigma_j)}\lesssim t^{-\frac{1}{4}},
\end{align*}
which combined with the boundedness of $(1-\tilde A_j^\infty)^{-1}$ deduces that for sufficiently large $t$,
\begin{align*}
	(1-\tilde A_j)^{-1}=(1-\tilde A_j^\infty)^{-1}(1-(\tilde A_j-\tilde A_j^\infty)(1-\tilde A_j^\infty)^{-1})^{-1}\in\bb(L^2(\tilde\Sigma_j)).
\end{align*}
For $(1-A_j)^{-1}$, by (\ref{e67s}), we obtain htat
\begin{align*}
	(1-A_j)^{-1}=N_j^{-1}\Delta_{j0}^{-1}(1-\tilde A_j)^{-1}\Delta_{j0}N_j\in\bb(L^2(\Sigma_j')),\quad j=1,2,
\end{align*}
because $(1-\tilde A_j)^{-1}\in\bb(L^2(\tilde\Sigma_j))$. 
By Lemma 2.56 in \cite{deift1993steepest}, we learn that since $w^j$ is supported on $\Sigma_j\subset\Sigma^{(2)}\cap\Sigma_j'$, that $(1-C_{w^j})^{-1}\in\bb(L^2(\Sigma^{(2)}))$ are equivalent to that $(1-A_j)^{-1}\in\bb(L^2(\Sigma_j'))$. 
By (\ref{e43}), boundedness of $(1-C_{w^j})^{-1}$ and Lemma \ref{l3.10}, we derive that for sufficiently large $t$,
\begin{align*}
	&(1-C_{w'})^{-1}=(1+C_{w^1}(1-C_{w^1})^{-1}+C_{w^2}(1-C_{w^2})^{-1})\notag\\
	&\quad\times(1-C_{w^2}C_{w^1}(1-C_{w^1})^{-1}-C_{w^1}C_{w^2}(1-C_{w^2})^{-1})^{-1}\in \bb(L^2(\Sigma^{(2)})).
\end{align*}
By direct computation and Lemma \ref{l3.7}, we derive that
\begin{align}\label{e98s}
	\parallel C_w-C_{w'}\parallel_{L^2\to L^2}=\parallel C_{w^r}\parallel_{L^2\to L^2}\lesssim\parallel w^r\parallel_{L^\infty(\Sigma^{(2)})}\lesssim t^{-1}.
\end{align}
For sufficiently large $t$, by boundedness of $(1-C_{w'})^{-1}$ and (\ref{e98s}), it follows that
\begin{align*}
	(1-C_{w})^{-1}=(1-C_{w'})^{-1}(1-(C_w-C_{w'})(1-C_{w'})^{-1})^{-1}\in\bb(L^2(\Sigma^{(2)})). 
\end{align*}
We complete the proof of this part.

\section{Long-time asymptotic analysis on $\xi\ge V_0>1$}\label{s4}
\indent

In this section, we analyzing asymptotic properties for the case when $\frac{n}{2t}=\xi\ge V_0>1$. 
In this case, we set $\epsilon_0=\frac{\sqrt{V_0^2-1}+V_0-1}{2}$.
We see that the jump matrix $V$ admits a upper/lower triangular factorization shown in Jump condition of RH problem.
The exponential part in the lower/upper triangular matrix decays fast as $t\to+\infty$ on the $-/+$ side of the jump contour, respectively, seeing (a) in Figure \ref{f3s}. 
According to this factorization, firstly, we deform the RH problem: $M\rightsquigarrow M^{(3)}$, such that it admits $\bar\partial$-RH problem \ref{d4.1} and the new jump matrix decay to $I$ as $t\to+\infty$;
secondly, we deform the solution for $\bar\partial$-RH problem into the product of solutions for RH problem \ref{r4.2} and $\bar\partial$-problem \ref{d4.3}, and analyze long-time asymptotic properties of these solutions separately. 
Finally, we obtain the asymptotic property (\ref{e113}) by the reconstructed formula. 

\begin{figure}
	\quad\quad\quad\begin{subfigure}{0.4\textwidth}
		\quad\quad\quad\ \ \includegraphics[width=0.5\linewidth]{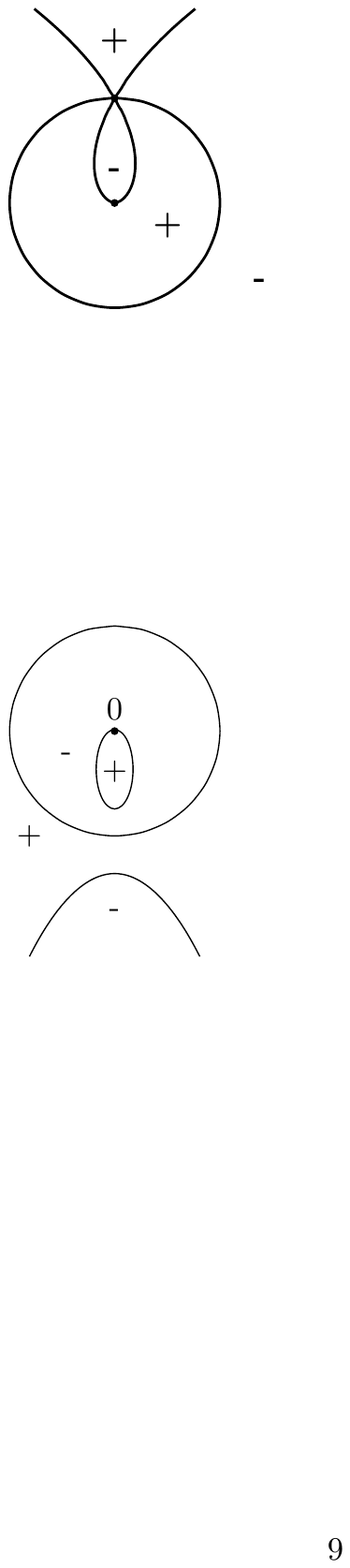}
		\caption{}
	\end{subfigure}
	\begin{subfigure}{0.4\textwidth}
		\quad\quad\quad\ \ \includegraphics[width=0.5\linewidth]{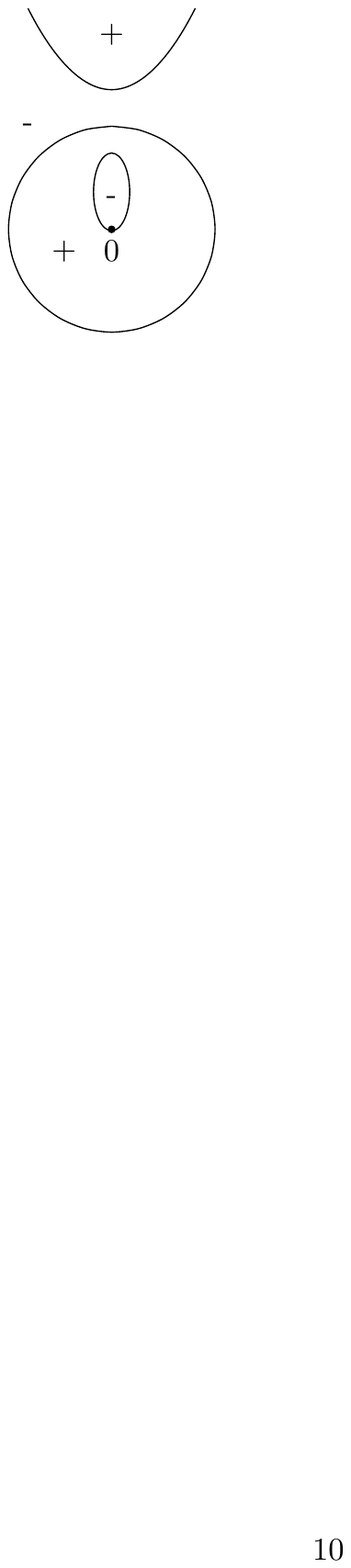}
		\caption{}
	\end{subfigure}
	\caption{The sign of $\im\phi$ in region II and III.}\label{f3s}
\end{figure}

Introducing a $2\times2$ matrix-valued function $M^{(3)}\equiv M^{(3)}(\lambda,n,t)$
\begin{subequations}
\begin{align}
	&M^{(3)}=M\pp^1,\quad\pp^1(\lambda,n,t)=\begin{cases}
		I&\lambda\in\Omega_7\cup\Omega_{10},\\
		\left[\begin{matrix}
			1&0\\R_8(\lambda)e^{it\phi(\lambda,n,t)}&1
		\end{matrix}\right]&\lambda\in\Omega_8,\\
	\left[\begin{matrix}
		1&R_9(\lambda)e^{-it\phi(\lambda,n,t)}\\0&1
	\end{matrix}\right]&\lambda\in\Omega_9,
	\end{cases}\label{e99a}\\
&\quad R_8(\lambda)=-r(\theta),\quad R_9(\lambda)=-\overline{r(\theta)},\quad \lambda=|\lambda| e^{i\theta}, \label{e97b}
\end{align}
\end{subequations}
and
\begin{subequations}\label{e98}
\begin{align}
	&w^{(3)}_+\equiv w^{(3)}_+(\lambda,n,t)=\begin{cases}
		\textbf{0},&\lambda\in\Sigma_{-\epsilon_0}\\
		\left[\begin{matrix}
			0&0\\-R_8(\lambda)e^{it\phi(\lambda,n,t)}&0
		\end{matrix}\right]&\lambda\in\Sigma_{\epsilon_0},
	\end{cases}\\ 
	&w^{(3)}_-\equiv w^{(3)}_-(\lambda,n,t)=\begin{cases}
		\left[\begin{matrix}
			0&R_9(\lambda)e^{-it\phi(\lambda,n,t)}\\0&0
		\end{matrix}\right]&\lambda\in\Sigma_{-\epsilon_0},\\
		\textbf{0}&\lambda\in\Sigma_{\epsilon_0},
	\end{cases}
\end{align}
\end{subequations}
we obtain a $\bar\partial$-RH problem:
\begin{drhp}\label{d4.1}
	Find a $2\times2$ matrix-valued function such that
	\begin{itemize}
		\item $M^{(3)}$ belongs to $C^0(\mathbb{C}\setminus(
		\Sigma_{-\epsilon_0}\cup\Sigma_{\epsilon_0}))$ and its first-order partial derivatives are continuous on $\mathbb{C}\setminus(\Sigma\cup\Sigma_{-\epsilon_0}\cup\Sigma_{\epsilon_0})$.
		\item As $\lambda\to\infty$, $M^{(3)}(\lambda,n,t)\sim I+\oo(\lambda^{-1})$.
		\item On $\lambda\in(\Sigma_{\epsilon_0}\cup\Sigma_{-\epsilon_0})$, $M^{(3)}_+=M^{(3)}_-V^{(3)}$, $ V^{(3)}=(1-w^{(3)}_-)^{-1}(1+w^{(3)}_+)$.
		\item On $\lambda\in\mathbb{C}\setminus(\Sigma\cup\Sigma_{-\epsilon_0}\cup\Sigma_{\epsilon_0})$,
		\begin{align*}
			&\bar\partial M^{(3)}=M^{(3)}\bar\partial\pp^1,\\ &\bar\partial\pp^1(\lambda,n,t)=\begin{cases}
				\textbf{0}&\lambda\in\Omega_7\cup\Omega_{10},\\
				\left[\begin{matrix}
					0&0\\\bar\partial R_8(\lambda)e^{it\phi(\lambda,n,t)}&0
				\end{matrix}\right]&\lambda\in\Omega_8,\\
				\left[\begin{matrix}
					0&\bar\partial R_9(\lambda)e^{-it\phi(\lambda,n,t)}\\0&0
				\end{matrix}\right]&\lambda\in\Omega_9.
			\end{cases}
		\end{align*}
	\end{itemize}
\end{drhp}
\begin{figure}
	\centering\includegraphics[width=0.5\linewidth]{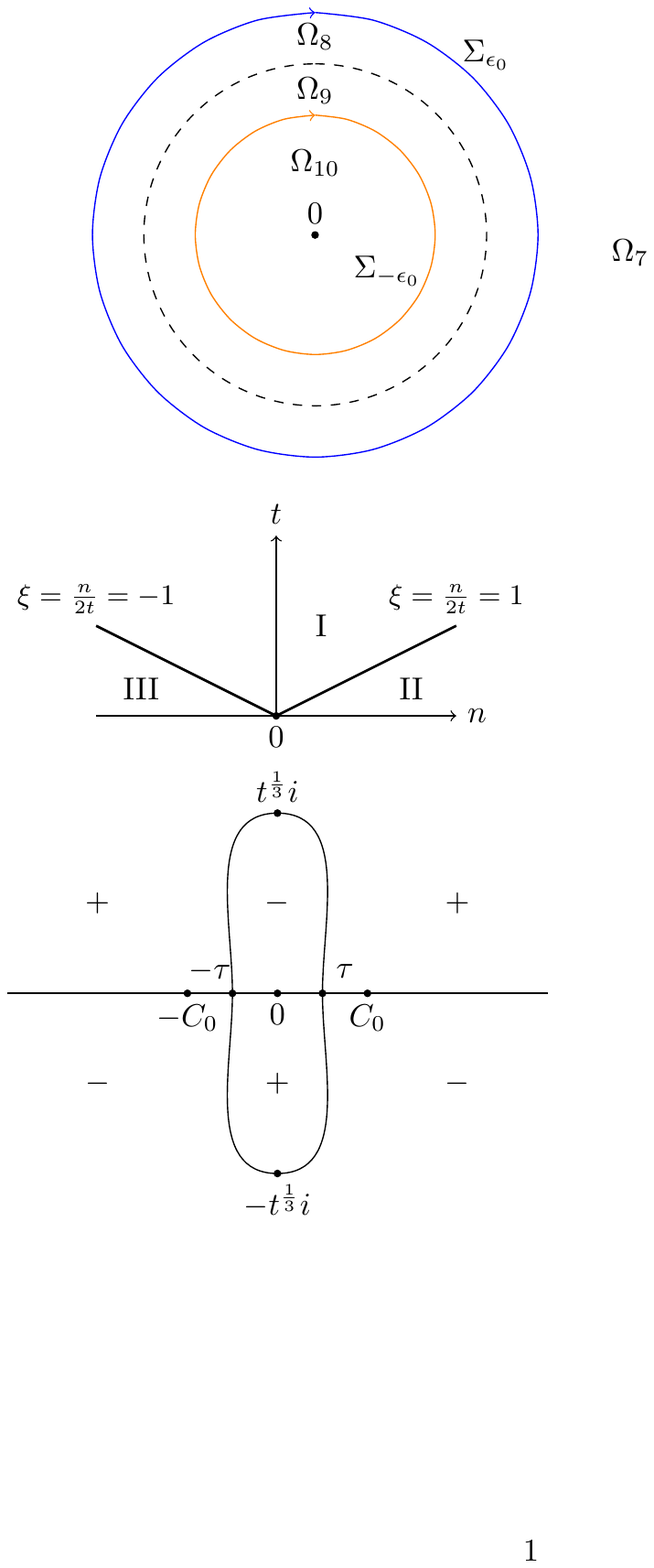}
	\caption{The jump contour: $\Sigma=\Sigma_{-\epsilon_0}\cup\Sigma_{\sigma_0}$.}\label{f8}
\end{figure}
For sufficiently large $t>0$, we deform the solution for $\bar\partial$-RH problem \ref{d4.1} into the product of solutions for RH problem \ref{r4.2} and $\bar\partial$ problem \ref{d4.3}:
\begin{align}\label{e101}
	M^{(3)}=M^{(3,D)}M^{(3,R)},
\end{align}
where $M^{(3,R)}\equiv M^{(3,R)}(\lambda,n,t)$ admits the same jump condition as $M^{(3)}$'s and $M^{(3,D)}\equiv M^{(3,D)}(\lambda,n,t)$ is continuous over complex plane $\mathbb{C}$;
then, in the remaining part of this section, by analyzing long-time asymptotic properties of these solutions, we claim that
\begin{align}\label{e102}
	M^{(3)}(0,n,t)\sim I+\oo(t^{-1}).
\end{align}
Below, we study the following problems.
\begin{rhp}\label{r4.2}
	Find a $2\times2$ matrix-valued function $M^{(3,R)}$ such that
	\begin{itemize}
		\item $M^{(3,R)}$ is holomorphic on $\mathbb{C}\setminus(\Sigma_{-\epsilon_0}\cup\Sigma_{\epsilon_0})$.
		\item As $\lambda\to\infty$, $M^{(3,R)}(\lambda,n,t)\sim I+\oo(\lambda^{-1})$.
		\item On $\lambda\in\Sigma_{-\epsilon_0}\cup\Sigma_{\epsilon_0}$, $M^{(3,R)}_+=M^{(3,R)}_-V^{(3)}$.
	\end{itemize}
\end{rhp}
\begin{dbarproblem}\label{d4.3}
	Find a $2\times2$ matrix-valued function $M^{(3,D)}$ such that
	\begin{itemize}
		\item $M^{(3,D)}$ is continuous on $\mathbb{C}$, and its first-order partial derivatives are continuous on $\mathbb{C}\setminus(\Sigma\cup\Sigma_{-\epsilon_0}\cup\Sigma_{\epsilon_0})$.
		\item As $\lambda\to\infty$, $M^{(3,D)}\sim I+\oo(\lambda^{-1})$.
		\item On $\lambda\in\mathbb{C}\setminus(\Sigma\cup\Sigma_{-\epsilon_0}\cup\Sigma_{\epsilon_0})$, 
		\begin{align*}
			\bar\partial M^{(3,D)}=M^{(3,D)}\tilde\pp^1,\quad \tilde\pp^1=M^{(3,R)}\bar\partial\pp^1(M^{(3,R)})^{-1}.
		\end{align*}
		
	\end{itemize}
\end{dbarproblem}

For RH problem \ref{r4.2}, like what we have done to RH problem \ref{r3.4}, we obtain the Beals-Coifman solution for RH problem \ref{r4.2},
\begin{align}\label{e100}
	&M^{(3,R)}(\lambda,n,t)=I+\frac{1}{2\pi i}\int_{\Sigma^{(3)}}\frac{[(1-C_{w^{(3)}}^{\Sigma^{(3)}})^{-1}Iw^{(3)}](s,n,t)}{s-\lambda}\ddddd s,\\
	&\quad C_{w^{(3)}}^{\Sigma^{(3)}}=C_+^{\Sigma^{(3)}}(\cdot w^{(3)}_-)+C_-^{\Sigma^{(3)}}(\cdot w^{(3)}_+),\quad w^{(3)}=w_+^{(3)}+w^{(3)}_-,\notag\\
	&\quad C_\pm^{\Sigma^{(3)}}f(\lambda)=\lim_{\lambda'\to\lambda,\lambda\text{ on the $\pm$ side of $\Sigma^{(3)}$}}\int_{\Sigma^{(3)}}\frac{f(s)}{2\pi i(s-\lambda')}\ddddd s.\notag
\end{align}
Since $\Sigma_{\epsilon_0}$, $\Sigma_{-\epsilon_0}$ are compact curves and contained in the region $\{\im\phi>0\}$, $\{\im\phi<0\}$, respectively, there is a positive constant $C$ such that
\begin{align}
	\im\phi\big|_{\Sigma_{\epsilon_0}}\ge C,\quad\im\phi\big|_{\Sigma_{-\epsilon_0}}\le-C;
\end{align}
therefore, by the fact that $r\in H^1_\theta(\Sigma)\subset L^\infty(\Sigma)$, (\ref{e97b}) and (\ref{e98}), we obtain that as $t\to+\infty$, 
\begin{align}
	\parallel w^{(3)}_\pm\parallel_{L^\infty(\Sigma^{(3)})}\lesssim e^{-Ct};
\end{align}
as a result, using the technique applied in (\ref{e40}), we deduce that
\begin{align}\label{e103}
	\parallel C^{(\Sigma^{(3)})}_{w^{(3)}}\parallel_{L^2\to L^2}\lesssim\parallel w^{(3)}\parallel_{L^\infty(\Sigma^{(3)})}\lesssim e^{-Ct}.
\end{align}
Considering (\ref{e100}) and (\ref{e103}), we obtain that as $t\to+\infty$, 
\begin{align}\label{e107}
	M^{(3,R)}(0,n,t)\sim I+\oo(e^{-Ct}).
\end{align}
For the boundedness of $M^{(3,R)}$, by RH problem \ref{r4.2}, we find that $\det M^{(3,R)}$ admits no jump on $\mathbb{C}$, and as $\lambda\to\infty$, 
\begin{align*}
	\lim_{\lambda\to\infty}\det M^{(3,R)}\sim 1+\oo(\lambda^{-1});
\end{align*}
therefore, by Liouville's Theorem, we derive that
\begin{align*}
	\det M^{(3,R)}\equiv1;
\end{align*}
moreover, since $M^{(3,R)}$ admits no pole, both $M^{(3,R)}$ and $(M^{(3,R)})^{-1}$ are bounded on $\mathbb{C}$.

For $\bar\partial$-problem \ref{d4.3}, Like that in Section \ref{s3.7}, we also have the solution for $\bar\partial$-problem \ref{d4.3}:
\begin{align}\label{e108}
	M^{(3,D)}(\lambda,n,t)=I+[S^1M^{(3,D)}](\lambda,n,t),
\end{align}
where $S^1$: $f\mapsto S^1f$ is a linear operator and belongs to  $\bb(L^\infty(\mathbb{C}))$,
\begin{align}\label{e109}
	S^1f(\lambda,n,t)=-\frac{1}{\pi}\iint_\mathbb{C}\frac{f(s)\tilde\pp^1 (s,n,t)}{s-\lambda}\ddddd L(s).
\end{align}
For this operator $S^1$, we find it decays as $t\to+\infty$ on $L^\infty(\mathbb{C})$ as shown in Proposition \ref{l4.4}.
\begin{proposition}\label{l4.4}
	For $r\in H^1_\theta(\Sigma)$, the integral operator $S^1$ belongs to $\mathcal{B}(L^\infty(\mathbb{C}))$, and it satisfies that
	\begin{align*}
		\parallel S^1\parallel_{L^\infty\to L^\infty}\lesssim t^{-\frac{1}{4}}.
	\end{align*}
\end{proposition}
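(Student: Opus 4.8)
The plan is to mirror the argument for Proposition \ref{p3.14}, replacing the stationary-phase estimate of the Zakharov--Manakov region by the favorable sign of $\im\phi$ in the fast decaying region. First I would record that $\tilde\pp^1=M^{(3,R)}\bar\partial\pp^1(M^{(3,R)})^{-1}$ is supported only on $\Omega_8\cup\Omega_9$, since $\bar\partial\pp^1$ vanishes on $\Omega_7\cup\Omega_{10}$. Because $M^{(3,R)}$ and $(M^{(3,R)})^{-1}$ were just shown to be bounded on $\mathbb{C}$, for any $f\in L^\infty(\mathbb{C})$ the bound $|S^1f(\lambda,n,t)|\lesssim\parallel f\parallel_{L^\infty}\iint_{\Omega_8\cup\Omega_9}|s-\lambda|^{-1}|\bar\partial\pp^1(s,n,t)|\,\ddddd L(s)$ reduces everything to the $\bar\partial$-derivative of $\pp^1$. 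By the symmetry $R_9=\overline{R_8}$ and the reflection through $\Sigma$, it is enough to treat the contribution of $\Omega_8$; the other region is parallel.

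Next I would pass to polar coordinates $s=\rho e^{i\theta}$, where $\ddddd L(s)=\rho\,\ddddd\theta\,\ddddd\rho$ and $\bar\partial=\tfrac{e^{i\theta}}{2}(\partial_\rho+i\rho^{-1}\partial_\theta)$. Since $R_8(\lambda)=-r(\theta)$ depends only on the angular variable, $|\bar\partial R_8|=\tfrac{1}{2\rho}|r'(\theta)|$, while the exponential factor contributes $e^{-t\im\phi(\rho e^{i\theta},n,t)}$. The key analytic input is the sign of $\im\phi$ on $\Omega_8$: by the construction of the deformation (see Figure \ref{f3s}), $\im\phi>0$ there and it vanishes only on $\Sigma$, so on $\Omega_8$ one has a lower bound of the form $\im\phi\gtrsim(\rho-1)^2$ near the unit circle, analogous to (\ref{e31s}). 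Granting this, $e^{-t\im\phi}\le e^{-ct(\rho-1)^2}$ for some $c>0$, exactly the weight appearing in step $\circled{5}$ of Proposition \ref{p3.14}.

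With this in hand the remaining estimates are a transcription of $\circled{2}$ through $\circled{6}$ in Proposition \ref{p3.14}. For the angular integral I would use Cauchy--Schwarz together with $r\in H^1_\theta(\Sigma)$ and the elementary kernel bound $\int|\rho e^{i\theta}-\lambda|^{-2}\,\ddddd\theta\lesssim|\rho-|\lambda||^{-1}$, which yields $\int|r'(\theta)|\,|\rho e^{i\theta}-\lambda|^{-1}\,\ddddd\theta\lesssim\parallel r\parallel_{H^1}|\rho-|\lambda||^{-1/2}$. The surviving radial integral $\int_1^{\rho_0}|\rho-|\lambda||^{-1/2}e^{-ct(\rho-1)^2}\,\ddddd\rho\lesssim t^{-1/4}$ is uniform in $\lambda$ by the estimate from \cite{dieng2008long}. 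Combining the two integrations gives $|S^1f(\lambda,n,t)|\lesssim t^{-1/4}\parallel f\parallel_{L^\infty(\mathbb{C})}$ uniformly in $\lambda\in\mathbb{C}$, hence $S^1\in\bb(L^\infty(\mathbb{C}))$ with $\parallel S^1\parallel_{L^\infty\to L^\infty}\lesssim t^{-1/4}$.

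I expect the main obstacle to be the second step: verifying the quadratic lower bound $\im\phi\gtrsim(\rho-1)^2$ uniformly on $\Omega_8$ (and $\Omega_9$) in the regime $\xi\ge V_0>1$, up to the cutoff radius $\rho_0$. Unlike the Zakharov--Manakov case, there are no stationary phase points on $\Sigma$, so $\im\phi$ vanishes only to first order across $\Sigma$ and the relevant control comes from the global sign distribution of $\im\phi$ rather than a Taylor expansion at a critical point; one must check that the supports $\Omega_8,\Omega_9$ chosen in the deformation stay inside $\{\pm\im\phi>0\}$ with the claimed lower bound throughout. Once that is secured, the weight $e^{-ct(\rho-1)^2}$ is available and the rest of the proof is identical to Proposition \ref{p3.14}.
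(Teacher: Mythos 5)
Your architecture is the same as the paper's: restrict to $f$ supported on $\Omega_8$ (with $\Omega_9$ symmetric), use the boundedness of $M^{(3,R)}$ and $(M^{(3,R)})^{-1}$ established after (\ref{e107}), pass to polar coordinates where $|\bar\partial R_8|=\tfrac{1}{2\rho}|r'(\theta)|$, control the angular integral by Cauchy--Schwarz against $\parallel r\parallel_{H^1_\theta(\Sigma)}$ with the kernel bound $\int_0^{2\pi}|\rho e^{i\theta}-\lambda|^{-2}\ddddd\theta\lesssim|\rho-|\lambda||^{-1}$, and close with a radial integral that is $\oo(t^{-\frac{1}{4}})$ uniformly in $\lambda$; this is precisely the chain of estimates in (\ref{e110}). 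However, the one ingredient you explicitly defer --- the lower bound for $\im\phi$ on $\Omega_8$ --- is the only part of the proposition that is not a transcription of Proposition \ref{p3.14}, and it is exactly where the hypotheses $\xi\ge V_0>1$ and the choice $\epsilon_0=\frac{\sqrt{V_0^2-1}+V_0-1}{2}$ enter. A proof that says ``granting this'' at that point proves only the routine part, so as written there is a genuine gap.

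The gap is fillable, but the bound to aim for is \emph{linear} in $\rho-1$, not quadratic: for $|\xi|>1$ there are no stationary phase points on $\Sigma$, so $\im\phi$ vanishes only to first order across the circle, and a Gaussian weight ``analogous to (\ref{e31s})'' is the wrong model. Concretely, on $\Omega_8$ one has $1<\rho\le1+\epsilon_0$, and $\sin\theta\ge-1$ gives, uniformly in the angle,
\begin{align*}
	\im\phi(\rho e^{i\theta},n,t)=(\rho-\rho^{-1})\sin\theta+2\xi\ln\rho\ \ge\ g(\rho)\deff 2\xi\ln\rho-\rho+\rho^{-1}.
\end{align*}
Here $g(1)=0$, $g'(\rho)=\rho^{-2}(2\xi\rho-\rho^2-1)>0$ for $1\le\rho<\xi+\sqrt{\xi^2-1}$, and $g''(\rho)=2\rho^{-3}(1-\xi\rho)<0$; since the stated $\epsilon_0$ satisfies $1+\epsilon_0<V_0+\sqrt{V_0^2-1}\le\xi+\sqrt{\xi^2-1}$, the function $g$ is increasing and concave on $[1,1+\epsilon_0]$, hence lies above its chord: $g(\rho)\ge\frac{g(1+\epsilon_0)}{\epsilon_0}(\rho-1)$ with a positive constant depending only on $V_0,\epsilon_0$. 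This is the estimate the paper uses in justifying $\circled{11}$, recorded there in the form $g(\rho)\ge2(\rho-1)\bigl(\frac{V_0\ln(1+\epsilon_0)}{\epsilon_0}-1\bigr)$ (note that positivity of that particular constant requires $V_0\ln(1+\epsilon_0)>\epsilon_0$, which is not automatic for $V_0$ close to $1$ with this $\epsilon_0$; the chord argument above is the safe formulation). Once the linear bound is in hand, your quadratic bound follows trivially because $\rho-1\le\epsilon_0$, and the radial integral with weight $e^{-ct(\rho-1)}$ (or the weaker $e^{-ct(\rho-1)^2}$) gives the uniform $t^{-\frac{1}{4}}$ --- indeed $t^{-\frac{1}{2}}$ from the linear weight --- so the rest of your transcription goes through.
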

\begin{proof}
	We detail the case when functions $f(s)$ are only supported on $\Omega_8$. 
	Like the proof in Proposition \ref{p3.14}, we obtain a series of estimates: 
	\begin{align}\label{e110}
		&|S^1f(\lambda)|\le\frac{\parallel f\parallel_{L^\infty(\Omega_8)}}{\pi}\parallel M^{(3,R)}\parallel_{L^\infty(\Omega_8)}\parallel (M^{(3,R)})^{-1}\parallel_{L^\infty(\Omega_8)}\notag\\
		&\quad\quad\iint_{\Omega_8}\frac{|\bar\partial R_8(s)|e^{-t\im\phi(s,n,t)}}{|s-\lambda|}\rho\ddddd L(s)\notag\\
		&\quad\overset{\circled{10}}{\lesssim}\parallel f\parallel_{L^\infty(\Omega_8)}\int_1^{1+\epsilon_0}\int_{0}^{2\pi}\frac{|\bar\partial R_8(s)|e^{-t(2\xi\ln\rho-(\rho-\rho^{-1}))}}{|\rho e^{i\theta}-\lambda|}\rho\ddddd\theta\ddddd\rho\notag\\
		&\quad=\parallel f\parallel_{L^\infty(\Omega_8)}\int_1^{1+\epsilon_0}\Big| \frac{r'(\theta)}{\rho e^{i\theta}-\lambda}\Big|\ddddd\theta\int_{0}^{2\pi}e^{-t(2\xi\ln\rho-(\rho-\rho^{-1}))}\rho\ddddd\rho\notag\\
		&\quad\overset{\circled{11}}{\le}\parallel f\parallel_{L^\infty(\Omega_8)}\parallel r'\parallel_{L^2_\theta(\sigma)}\int_{0}^{2\pi}\parallel\frac{1}{\rho e^{i\theta}-\lambda}\parallel_{L^2(\{|s|=\rho\})}e^{-2t(V_0\frac{\ln(1+\epsilon_0)}{\epsilon_0}-1)(\rho-1)}\rho\ddddd\rho\notag\\
		&\quad\lesssim\parallel f\parallel_{L^\infty(\Omega_8)} \int_{0}^{2\pi}\frac{e^{-2t(V_0\frac{\ln(1+\epsilon_0)}{\epsilon_0}-1)(\rho-1)}}{|\rho-|\lambda||^{\frac{1}{2}}}\rho\ddddd\rho\lesssim t^{-\frac{1}{4}}\parallel f\parallel_{L^\infty(\Omega_8)}.
	\end{align}
	In (\ref{e110}), by the fact that for $s=\rho e^{i\theta}$, 
	\begin{align*}
		\im\phi(s,n,t)=(\rho-\rho^{-1})\sin\theta+2\xi\ln\rho\ge 2\xi\ln\rho-(\rho-\rho^{-1}),
	\end{align*}
	which with the boundedness of $\parallel M^{(3,R)}\parallel_{L^\infty(\Omega_8)}$ and $\parallel (M^{(3,R)})^{-1}\parallel_{L^\infty(\Omega_8)}$ deduces $\circled{10}$; 
	$\circled{11}$ is by Schwartz Inequalities and the fact that on the region $\Omega_8$, $\rho>1$ such that
	\begin{align*}
		2\xi\ln\rho-\rho+\rho^{-1}=\left(\frac{2\xi\ln\rho}{\rho-1}-\left(1+\frac{1}{\rho}\right)\right)(\rho-1)\ge2(\rho-1)\left(\frac{V_0\ln(1+\epsilon_0)}{\epsilon_0}-1\right)
	\end{align*}
	where $\frac{V_0\ln(1+\epsilon_0)}{\epsilon_0}-1$ is a positive number because of the choosing of $\epsilon_0$ and $V_0>1$.
	We complete the proof for $f(s)$ only supported on $\Omega_8$, and the proof for general $f(s)\in L^\infty(\mathbb{C})$ is similarly obtained. 
\end{proof}
From Proposition \ref{l4.4}, we see that for sufficiently large $t$, $(1-S^1)^{-1}I$ exists and is bounded on $L^\infty(\mathbb{C})$;
therefore, we obtain that
\begin{align}\label{e111}
	&\Big|\frac{1}{\pi}\iint_{\Omega_8}\frac{[(1-S)^{-1}I\tilde\pp^1](s,n,t)}{s}\ddddd L(s)\Big|\notag\\
	&\quad\le\frac{1}{\pi}\parallel (1-S)^{-1}I\parallel_{L^\infty(\mathbb{C})}\parallel M^{(3,R)}\parallel_{L^\infty(\Omega_8)}\parallel (M^{(3,R)})^{-1}\parallel_{L^\infty(\Omega_8)}\notag\\
	&\quad\quad\iint_{\Omega_8}\frac{|\bar\partial R_8(s)|e^{-t\im\phi(s,n,t)}}{|s|}\rho\ddddd L(s)\notag\\
	&\quad\lesssim\iint_{\Omega_8}\frac{|\bar\partial R_8(s)|e^{-t\im\phi(s,n,t)}}{|s|}\rho\ddddd L(s)=\int_{1}^{1+\epsilon_0}\int_{0}^{2\pi}\frac{|r'(\theta)|\ddddd\theta\ddddd\rho}{e^{t((\rho-\frac{1}{\rho})\sin\theta+2\xi\ln\rho)}}\notag\\
	&\quad\le\parallel r'\parallel_{L^1(\Sigma)}\int_{1}^{1+\epsilon_0}e^{-t(2\xi\ln\rho-\rho+\rho^{-1})}\ddddd\rho\notag\\
	&\quad \le\parallel r\parallel_{L^1(\Sigma)}\int_{1}^{1+\epsilon_0}e^{-2t(V_0\frac{\ln(1+\epsilon_0)}{\epsilon_0}-1)(\rho-1)}\ddddd\rho\lesssim t^{-1};
\end{align}
similar to (\ref{e111}), we obtain the result for integral region $\mathbb{C}$ that
\begin{align}\label{e112}
	\Big|\frac{1}{\pi}\iint_{\mathbb{C}}\frac{[(1-S)^{-1}I\tilde\pp^1](s,n,t)}{s}\ddddd L(s)\Big|\lesssim t^{-1}.
\end{align}

Considering (\ref{e101}), (\ref{e107}), (\ref{e108}), (\ref{e109}) and (\ref{e112}), we prove (\ref{e102});
therefore, by (\ref{e15}), (\ref{e99a}) and (\ref{e102}), we obtain that
\begin{align*}
	q_n(t)\sim \oo(t^{-1}).
\end{align*}

\section{Long-time asymptotic analysis on $\xi\le-V_0<-1$}\label{s5}
\indent

In the last section, we analyze the case of $\xi\le-V_0<-1$. 
Introducing a $2\times2$ matrix-valued function
\begin{align}\label{e113}
	M^{(4)}=M\tilde\delta^{-\sigma_3},\quad\tilde\delta\equiv\tilde\delta(\lambda)=e^{\frac{1}{2\pi i}\int_\Sigma\frac{\ln(1-|r(s)|^2)\ddddd s}{s-\lambda}}
\end{align}
we claim that $\tilde\delta$ satisfies Proposition \ref{p5.1};
moreover, by Proposition \ref{p5.1} and RH problem \ref{r2.1}, we obtain that $M^{(4)}$ admits RH problem \ref{r5.2}. 
We see in RH problem \ref{r5.2} that on $\lambda\in\Sigma$, the jump matrix admits a lower/upper triangular factorization
\begin{align}
	&V^{(4)}=(I-w^{(4)}_-)^{-1}(I-w^{(4)}_+),\\
	&\quad w^{(4)}_-\equiv w^{(4)}_-(\lambda,n,t)=\left[\begin{matrix}
		0&0\\
		\frac{\delta_-^{-2}(\lambda)r(\lambda)}{1-|r(\lambda)|^2}e^{it\phi(\lambda,n,t)}&0
	\end{matrix}\right],\notag\\
&\quad w^{(4)}_+\equiv w^{(4)}_+(\lambda,n,t)=\left[\begin{matrix}
		0&-\frac{\delta_+^2(\lambda)\overline{r(\lambda)}}{1-|r(\lambda)|^2}e^{-it\phi(\lambda,n,t)}\\
		0&0
	\end{matrix}\right],\notag
\end{align}
and the exponential part of $w^{(4)}_+$, $w^{(4)}_-$ decay exponentially on regions to the $+$, $-$ side of $\Sigma$ as $t\to+\infty$, respectively, seeing (b) in Figure \ref{f3s};
therefore, similar to the case of $\xi\ge V_0>1$ we obtain the long-time asymptotics when $\xi\le-V_0<-1$, and that
\begin{align*}
	q_n(t)\sim \oo(t^{-1}).
\end{align*}
\begin{proposition}\label{p5.1}
	Since $r\in H^1_\theta(\Sigma), $the scalar function $\tilde\delta$ satisfies: 
	\begin{enumerate}[label=(\alph*)]
		\item 
		$\tilde\delta$ is analytic on $\mathbb{C}\setminus \Sigma$.
		\item 
		As $\lambda\to\infty$,
		\begin{align*}
			\tilde\delta(\lambda)\sim 1+O(\lambda^{-1}).
		\end{align*}
		\item On the unit circle $\lambda\in\Sigma$,
		\begin{align*}
			\tilde\delta_+(\lambda)=\tilde\delta_-(\lambda)(1-|r(\lambda|^2)).
		\end{align*}
		\item $\tilde\delta$ admits the symmetry:
		\begin{align*}
			\tilde\delta(\lambda)=\overline{\tilde\delta(0)/\tilde\delta(\bar\lambda^{-1})}.
		\end{align*}
	\end{enumerate}
\end{proposition}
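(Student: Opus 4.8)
The plan is to follow the structure of the proof of Proposition \ref{p3.1} almost verbatim, with the one simplification that the density is now integrated over the whole circle $\Sigma$ rather than the arc $\wideparen{S_2S_1}$, so the delicate endpoint analysis of part (e) there is entirely absent. Throughout I use that, by (\ref{e12s}) together with $\|r\|_{L^\infty(\Sigma)}<1$, the density $f(s):=\ln(1-|r(s)|^2)$ is real-valued, bounded, and in fact Hölder continuous on $\Sigma$ (since $r$ is $\tfrac12$-Hölder by Sobolev embedding and $1-|r|^2$ is bounded below by a positive constant), so that the Cauchy integral defining $\tilde\delta$ is well-behaved.

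For (a) I write $\tilde\delta=e^{\Phi}$ with $\Phi(\lambda)=\frac{1}{2\pi i}\int_\Sigma\frac{f(s)}{s-\lambda}\,ds$. Since $f$ is continuous and $\Sigma$ is a smooth compact contour, $\Phi$ is holomorphic on $\mathbb{C}\setminus\Sigma$, and the exponential of a holomorphic function is holomorphic, which gives (a). For (b) I expand the Cauchy kernel as $\frac{1}{s-\lambda}=-\lambda^{-1}+O(\lambda^{-2})$ for $|s|=1$ as $\lambda\to\infty$, so that $\Phi(\lambda)=O(\lambda^{-1})$ and hence $\tilde\delta=e^{\Phi}=1+O(\lambda^{-1})$, which is (b).

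For (c) I apply the Plemelj--Sokhotski formula to $\Phi$: with the orientation of $\Sigma$ fixed as in the paper, the boundary values obey $\Phi_+-\Phi_-=f$ on $\Sigma$, whence $\tilde\delta_+/\tilde\delta_-=e^{\Phi_+-\Phi_-}=e^{f}=1-|r|^2$, giving (c). Parts (a)--(c) together with the normalization (b) constitute a scalar RH problem whose solution is unique: if $\tilde\delta$ and $g$ both satisfy them, their quotient $\tilde\delta/g$ has no jump across $\Sigma$ (the jumps coincide), extends to a bounded holomorphic function on $\mathbb{C}$ tending to $1$ at infinity, and so by Liouville's theorem equals $1$ identically.

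For (d) I verify that $g(\lambda):=\overline{\tilde\delta(0)/\tilde\delta(\bar\lambda^{-1})}$ solves this same scalar RH problem and then invoke the uniqueness just established. The involution $\lambda\mapsto\bar\lambda^{-1}$ is anti-holomorphic, fixes $\Sigma$ setwise, and exchanges $D_+$ and $D_-$; composing with conjugation makes $g$ holomorphic on $\mathbb{C}\setminus\Sigma$ (the apparent pole at $\lambda=0$ is removable, since $\tilde\delta(\bar\lambda^{-1})\to 1$ there by (b)), and as $\lambda\to\infty$ one has $\bar\lambda^{-1}\to0$, so $g\to\overline{\tilde\delta(0)/\tilde\delta(0)}=1$ with an $O(\lambda^{-1})$ error, yielding the normalization. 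The only substantive computation is the jump: because the inversion swaps the $\pm$ sides of $\Sigma$, the value $g_+(\lambda)$ is built from $\tilde\delta_-(\bar\lambda^{-1})$ and $g_-(\lambda)$ from $\tilde\delta_+(\bar\lambda^{-1})$, so that using (c) at the image point together with the reality of $f$ (hence $\overline{1-|r|^2}=1-|r|^2$ on $\Sigma$, where $\bar\lambda^{-1}=\lambda$) gives $g_+=g_-(1-|r|^2)$. Thus $g$ and $\tilde\delta$ solve the same problem, so $g\equiv\tilde\delta$, which is exactly (d). I expect this orientation and side bookkeeping in (d) to be the only genuinely delicate point; everything else is a direct transcription of the arc-case argument with the endpoint estimates removed.
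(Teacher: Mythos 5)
Your proposal is correct and follows essentially the same route as the paper: the paper's proof of Proposition \ref{p5.1} simply declares it ``parallel to that for Proposition \ref{p3.1}'', whose argument is exactly yours --- establish (a)--(c) from standard properties of the Cauchy integral (Plemelj formula included), then obtain (d) by checking that $\overline{\tilde\delta(0)/\tilde\delta(\bar\lambda^{-1})}$ solves the same scalar RH problem and invoking uniqueness. Your write-up is in fact more careful than the paper's (explicit side-swapping under the inversion $\lambda\mapsto\bar\lambda^{-1}$, removability at $\lambda=0$, reality of $1-|r|^2$), and correctly notes that the endpoint analysis of part (e) of Proposition \ref{p3.1} has no analogue here.
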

\begin{proof}
	This proof is parallel to that for Proposition \ref{p3.1}.
\end{proof}
\begin{rhp}\label{r5.2}
	Find a $2\times2$ matrix-valued function $M^{(4)}$ such that
	\begin{itemize}
		\item \textbf{Analyticity}: $M^{(1)}$ is analytic on $\mathbb{C}\setminus\Sigma$.
		\item \textbf{Normalization}: As $\lambda\to\infty$, 
		\begin{align*}
			M^{(4)}(\lambda,n,t)\sim I+O(\lambda^{-1}).
		\end{align*}
		\item \textbf{Jump condition}: On $\lambda\in\Sigma$,
		\begin{align*}
			&M^{(4)}_+=M^{(4)}_-V^{(4)},\\
			&\quad V^{(4)}\equiv V^{(3)}(\lambda,n,t)=\left[\begin{matrix}
				1&-\frac{\tilde\delta_+^2(\lambda)\overline{r(\lambda)}}{1-|r(\lambda)|^2}e^{-it\phi(\lambda,n,t)}\\
				\frac{\tilde\delta_-^{-2}(\lambda)r(\lambda)}{1-|r(\lambda)|^2}e^{it\phi(\lambda,n,t)}&1-|r(\lambda)|^2
			\end{matrix}\right].
		\end{align*}
	\end{itemize}
\end{rhp}

\bibliography{references}

\begin{thebibliography}{10}

\bibitem{gardner1967method}
GS~Gardner, JM~Greene, MD~Kruskal, and RM~Miura.
\newblock Method for solving {K}orteweg-de {V}ries equation.
\newblock {\em Phys. Rev. Lett.}, 19(19):1095--1097, 1967.

\bibitem{Ablowitz1974the}
MJ~Ablowitz, DJ~Kaup, AC~Newell, and H~Segur.
\newblock The inverse scattering transform-{F}ourier analysis for nonlinear
  problems.
\newblock {\em Stud. Appl. Math.}, 53(4):249--315, 1974.

\bibitem{Wang2010integrable}
DS~Wang, DJ~Zhang, and JK~Yang.
\newblock Integrable properties of the general coupled nonlinear
  {S}chr{\"o}dinger equations.
\newblock {\em J. Math. Phys.}, 51(2):023510, 2010.

\bibitem{Biondini2014inverse}
G~Biondini and G~Kova{\v{c}}i{\v{c}}.
\newblock Inverse scattering transform for the focusing nonlinear
  {S}chr{\"o}dinger equation with nonzero boundary conditions.
\newblock {\em J. Math. Phys.}, 55(3):031506, 2014.

\bibitem{Biondini2015inverse}
G~Biondini and D~Kraus.
\newblock Inverse scattering transform for the defocusing manakov system with
  nonzero boundary conditions.
\newblock {\em SIAM J. Math. Anal.}, 47(1):706--757, 2015.

\bibitem{Kraus2015the}
D~Kraus, G~Biondini, and G~Kova{\v{c}}i{\v{c}}.
\newblock The focusing {M}anakov system with nonzero boundary conditions.
\newblock {\em Nonlinearity}, 28(9):3101, 2015.

\bibitem{Pichler2017on}
M~Pichler and G~Biondini.
\newblock On the focusing non-linear {S}chr{\"o}dinger equation with non-zero
  boundary conditions and double poles.
\newblock {\em IMA J. Appl. Math.}, 82(1):131--151, 2017.

\bibitem{xu2014a}
J~Xu and EG~Fan.
\newblock A riemann-hilbert approach to the initial-boundary problem for
  derivative nonlinear {S}chr{\"o}dinger equation.
\newblock {\em Acta Math. Sci.}, 34(4):973--994, 2014.

\bibitem{xiao2016a}
Y~Xiao and EG~Fan.
\newblock A {R}iemann-{H}ilbert approach to the {H}arry-{D}ym equation on the
  line.
\newblock {\em Chin. Ann. Math. Ser. B}, 37(3):373--384, 2016.

\bibitem{kang2018multi}
ZZ~Kang, TC~Xia, and X~Ma.
\newblock Multi-soliton solutions for the coupled modified nonlinear
  {S}chr{\"o}dinger equations via {R}iemann--{H}ilbert approach.
\newblock {\em Chin. Phys. B}, 27(7):070201, 2018.

\bibitem{yang2019high}
B~Yang and Y~Chen.
\newblock High-order soliton matrices for {S}asa--{S}atsuma equation via local
  {R}iemann--{H}ilbert problem.
\newblock {\em Nonlinear Anal.-Real World Appl.}, 45:918--941, 2019.

\bibitem{yang2010nonlinear}
JK~Yang.
\newblock {\em Nonlinear waves in integrable and nonintegrable systems}.
\newblock SIAM, 2010.

\bibitem{yang2021riemann}
Y~Yang and EG~Fan.
\newblock {R}iemann--{H}ilbert approach to the modified nonlinear
  {S}chr{\"o}dinger equation with non-vanishing asymptotic boundary conditions.
\newblock {\em Physica D}, 417:132811, 2021.

\bibitem{teschl1999inverse}
G~Teschl.
\newblock Inverse scattering transform for the {T}oda hierarchy.
\newblock {\em Math. Nachr.}, 202(1):163--171, 1999.

\bibitem{Ablowitz2007inverse}
MJ~Ablowitz, G~Biondini, and B~Prinari.
\newblock Inverse scattering transform for the integrable discrete nonlinear
  {S}chr{\"o}dinger equation with nonvanishing boundary conditions.
\newblock {\em Inverse Probl.}, 23(4):1711--1758, 2007.

\bibitem{ablowitz2020discrete}
MJ~Ablowitz, XD~Luo, and ZH~Musslimani.
\newblock Discrete nonlocal nonlinear {S}chr{\"o}dinger systems:
  {I}ntegrability, inverse scattering and solitons.
\newblock {\em Nonlinearity}, 33(7):3653, 2020.

\bibitem{Prinari}
B~Prinari.
\newblock Discrete solitons of the focusing {A}blowitz-{L}adik equation with
  nonzero boundary conditions via inverse scattering.
\newblock {\em J. Math. Phys.}, 57(8):083510, 2016.

\bibitem{Ortiz2019inverse}
AK~Ortiz and B~Prinari.
\newblock Inverse scattering transform for the defocusing {A}blowitz-{L}adik
  system with arbitrarily large nonzero background.
\newblock {\em Stud. Appl. Math.}, 143(4):373--403, 2019.

\bibitem{chen2021riemann}
MS~Chen and EG~Fan.
\newblock Riemann-{H}ilbert approach for discrete sine-{G}ordon equation with
  simple and double poles.
\newblock {\em Stud. Appl. Math.}, 148(3):1180--1207, 2022.

\bibitem{deift1994long}
P~Deift and X~Zhou.
\newblock Long-time behavior of the non-focusing linear {S}chr\"odinger
  equation-a case study.
\newblock In {\em New series: lectures in mathematical sciences}. University of
  Tokyo, 1994.

\bibitem{deift1994long2}
P~Deift and X~Zhou.
\newblock Long-time asymptotics for integrable systems. higher order theory.
\newblock {\em Commun. Math. Phys.}, 165(1):175--191, 1994.

\bibitem{deift1993steepest}
P~Deift and X~Zhou.
\newblock A steepest descent method for oscillatory {R}iemann--{H}ilbert
  problems. {A}symptotics for the {MKdV} equation.
\newblock {\em Annals of Mathematics}, 137(2):295--368, 1993.

\bibitem{grunert2009long}
K~Grunert and G~Teschl.
\newblock Long-time asymptotics for the {K}orteweg--de {V}ries equation via
  nonlinear steepest descent.
\newblock {\em Math. Phys. Anal. Geom.}, 12(3):287--324, 2009.

\bibitem{de2009long}
Anne~Boutet De~Monvel, Aleksey Kostenko, D~Shepelsky, and G~Teschl.
\newblock Long-time asymptotics for the {C}amassa--{H}olm equation.
\newblock {\em SIAM J. Math. Anal.}, 41(4):1559--1588, 2009.

\bibitem{xu2015long}
J~Xu and EG~Fan.
\newblock Long-time asymptotics for the {F}okas--{L}enells equation with
  decaying initial value problem: without solitons.
\newblock {\em J. Differ. Equ.}, 259(3):1098--1148, 2015.

\bibitem{xu2018long}
J~Xu.
\newblock Long-time asymptotics for the short pulse equation.
\newblock {\em J. Differ. Equ.}, 265(8):3494--3532, 2018.

\bibitem{huang2015long}
L~Huang, J~Xu, and EG~Fan.
\newblock Long-time asymptotic for the {H}irota equation via nonlinear steepest
  descent method.
\newblock {\em Nonlinear Anal.-Real World Appl.}, 26:229--262, 2015.

\bibitem{zhu2018the}
QZ~Zhu, J~Xu, and EG~Fan.
\newblock The {R}iemann--{H}ilbert problem and long-time asymptotics for the
  {K}undu--{E}ckhaus equation with decaying initial value.
\newblock {\em Appl. Math. Lett.}, 76:81--89, 2018.

\bibitem{kruger2009long}
Helge Kr{\"u}ger and Gerald Teschl.
\newblock Long-time asymptotics of the {T}oda lattice for decaying initial data
  revisited.
\newblock {\em Rev. Math. Phys.}, 21(01):61--109, 2009.

\bibitem{kruger2009long2}
Helge Kr{\"u}ger and Gerald Teschl.
\newblock Long-time asymptotics for the {T}oda lattice in the soliton region.
\newblock {\em Math. Z.}, 262(3):585--602, 2009.

\bibitem{yamane2015long}
Hideshi Yamane.
\newblock Long-time asymptotics for the defocusing integrable discrete
  nonlinear {S}chr{\"o}dinger equation {II}.
\newblock {\em Symmetry Integr. Geom.}, 11:020, 2015.

\bibitem{yamane2019long}
Hideshi Yamane.
\newblock Long-time asymptotics for the integrable discrete nonlinear
  {S}chr{\"o}dinger equation: the focusing case.
\newblock {\em Funkc. Ekvacioj-Ser. Int.}, 62(2):227--253, 2019.

\bibitem{chen2020long}
Meisen Chen and Engui Fan.
\newblock Long-time asymptotic behavior for the discrete defocusing m{K}d{V}
  equation.
\newblock {\em J. Nonlinear Sci.}, 30(3):953--990, 2020.

\bibitem{zhou1998l2}
X~Zhou.
\newblock $l^2$-sobolev space bijectivity of the scattering and inverse
  scattering transforms.
\newblock {\em Commun. Pure Appl. Math.}, 51(7):697--731, 1998.

\bibitem{Ablowitz1975nonlinear}
MJ~Ablowitz and JF~Ladik.
\newblock Nonlinear differential-difference equations.
\newblock {\em J. Math. Phys.}, 16(3):598--603, 1975.

\bibitem{ablowitz1976nonlinear}
MJ~Ablowitz and JF~Ladik.
\newblock Nonlinear differential--difference equations and {F}ourier analysis.
\newblock {\em J. Math. Phys.}, 17(6):1011--1018, 1976.

\bibitem{nenciu2005lax}
I~Nenciu.
\newblock Lax pairs for the {Ablowitz-Ladik }system via orthogonal
  polynomialson the unit circle.
\newblock {\em International Mathematics Research Notices}, 2005(11):647--686,
  2005.

\bibitem{chow2006analytic}
KW~Chow, R~Conte, and N~Xu.
\newblock Analytic doubly periodic wave patterns for the integrable discrete
  nonlinear {Schr{\"o}dinger (Ablowitz--Ladik)} model.
\newblock {\em Physics Letters A}, 349(6):422--429, 2006.

\bibitem{miller1995finite}
PD~Miller, NM~Ercolani, IM~Krichever, and CD~Levermore.
\newblock Finite genus solutions to the {Ablowitz-Ladik} equations.
\newblock {\em Commun. Pure Appl. Math.}, 48(12):1369--1440, 1995.

\bibitem{yamane2014long}
H~Yamane.
\newblock Long-time asymptotics for the defocusing integrable discrete
  nonlinear schr{\"o}dinger equation.
\newblock {\em J. Math. Soc. Jpn.}, 66(3):765--803, 2014.

\bibitem{chen2022sobolev}
MS~Chen, EG~Fan, and JS~He.
\newblock $l^2$ {S}obolev space bijectivity of the scattering-inverse
  scattering transforms related to defocusing {Ablowitz-Ladik} systems.
\newblock {\em arXiv preprint arXiv:2204.06897}, 2022.

\bibitem{dieng2008long}
M~Dieng and K~McLaughlin.
\newblock Long-time {Asymptotics} for the {NLS} equation via dbar methods.
\newblock {\em arXiv preprint arXiv:0805.2807}, 2008.

\bibitem{Borghese2018long}
M~Borghese, R~Jenkins, and K~McLaughlin.
\newblock Long time asymptotic behavior of the focusing nonlinear
  {Schr\"odinger} equation.
\newblock {\em Ann. Inst. Henri Poincare-Anal. Non Lineaire}, 35(4):887--920,
  2018.

\bibitem{deift1993long}
PA~Deift, AR~Its, and X~Zhou.
\newblock Long-time asymptotics for integrable nonlinear wave equations.
\newblock In {\em Important developments in soliton theory}, pages 181--204.
  Springer, 1993.

\bibitem{beals1988direct}
R~Beals, P~Deift, and C~Tomei.
\newblock {\em Direct and inverse scattering on the line}.
\newblock Number~28. American Mathematical Society, 1988.

\end{thebibliography}
\bibliographystyle{unsrt}

\end{document}